\newcommand{\pack}[1]{\underline{\mathbb{#1}}}
\newcommand{\rib}[1]{\mathbf{#1}}
\newcommand{\ar}[1]{\mathbb{#1}}
\newcommand{\vp}[1]{\widehat{\mathbb{#1}}}
\newcommand{\V}{\mathcal{V}}
\newcommand{\B}{\mathcal{B}}
\newcommand{\bw}{\mathbf{w}}
\newcommand{\mvW}{\mathbf{W}}
\newcommand{\al}{\alpha}
\newcommand{\be}{\beta}
\newcommand{\ga}{\gamma}
\newcommand{\Q}{\hat{Q}}
\newcommand{\Z}{\hat{Z}}
\newcommand{\mvZ}{\tilde{\mathbf{Z}}}
\newcommand{\mvQ}{\mathbf{Q}}
\newcommand{\tQ}{\tilde{\mathbf{Q}}}
\newcommand{\dZ}{\dot{Z}}
\newcommand{\bphi}{\boldsymbol{\varphi}}
\DeclareMathOperator{\ba}{\backslash}
\DeclareMathOperator{\con}{/}
\DeclareMathOperator{\mba}{\reflectbox{$\angle$}}
\DeclareMathOperator{\mcon}{\angle}
\DeclareMathOperator{\pcon}{\rightthreetimes}
\DeclareMathOperator{\ot}{\otimes}
\newcommand{\x}{\times}
\newcommand{\pl}{\parallel}
\newcommand{\eq}{=}
\newtheorem{theorem}{Theorem}[section]
\newtheorem{corollary}[theorem]{Corollary}
\newtheorem{lemma}[theorem]{Lemma}
\newtheorem{proposition}[theorem]{Proposition}
\theoremstyle{definition}
\newtheorem{convention}{Convention}
\newtheorem{technique}{Technique}
\newtheorem{definition}{Definition}
\theoremstyle{remark}
\newtheorem*{remark}{Remark}
\title{Tensor product formulas for the Bollob\'as--Riordan and Krushkal polynomials}
\date{}
\author{Iain Moffatt\footnote{Department of Mathematics, Royal Holloway, University of London, Egham, TW20 0EX, United Kingdom; iain.moffatt@rhul.ac.uk.} and Maya Thompson\footnote{mayathompson.math@gmail.com.}}
\begin{document}
\maketitle

\begin{abstract}
Brylawski's tensor product formula expresses the Tutte polynomial of the tensor product of two graphs in terms of Tutte polynomials arising from the tensor factors. Analogous tensor product formulas are known for the ribbon graph polynomial and transition polynomials of graphs embedded in surfaces, as well as for the Bollob\'as--Riordan polynomial in some special cases. We define  the tensor product of graphs embedded in pseudo-surfaces and use this to generalize and unify all of the above results, providing Brylawski-style formulas for both the Bollob\'as--Riordan and  Krushkal polynomials. 
\end{abstract}

\section{Introduction}
Let $G$ and $H$ be graphs, $G$ loopless, and $e$ be a non-loop edge of $H$. A \emph{tensor product} $G\ot H$ is a graph obtained by identifying each edge in $G$ with the edge $e$ in its own copy of $H$, then deleting each of the identified edges. In other words, a tensor product is obtained by, for every edge in $G$, taking the 2-sum of $G$ and a copy of $H$.
In~\cite{MR863010}, Brylawski expressed the Tutte polynomial of a tensor product $G\ot H$ in terms of polynomials arising from $G$ and $H$:
\begin{equation}\label{btf}
T(G\ot H; x,y)=\phi^{n(G)}\psi^{r(G)}T\Big(G;\frac{T(H\ba e;x,y)}{\psi},\frac{T(H/e;x,y)}{\phi}\Big),
\end{equation}
where $\phi$ and $\psi$ are the unique solutions to the system of equations
\begin{align*}
    (x-1)\phi+\psi&=T(H\ba e),\\
   \phi+ (y-1)\psi& = T(H\con e).
\end{align*}
This result is known as \emph{Brylawski's tensor product formula}. It is  a useful computational aid (see the survey~\cite{merinohandbook}) and is significant in the theory of the Tutte polynomial, for example playing  a crucial role in Jaeger, Vertigan, and Welsh's seminal work~\cite{MR1049758} on the computational complexity of the Tutte polynomial. We note that Brylawski proved his result for matroids, although we only consider its graph version here.

We are interested in extending  Brylawski's tensor product formula to analogues of the Tutte polynomial for embedded graphs, or equivalently ribbon graphs. This problem was first considered by Huggett and Moffatt in~\cite{MR2854787}. Their paper offered three results for computing the Bollob\'as--Riordan polynomial~\cite{bollobasriordanpoly,zbMATH01801590}, $R(\rib{G}\ot \rib{H};x,y,z)$, of a tensor product $\rib{G}\ot \rib{H}$ of two ribbon graphs $\rib{G}$ and $\rib{H}$. However, these results are either partial results, or require compromises that moves them away from the form of~\eqref{btf}.
 For example, Theorem~4.3 and Corollary~4.4 of~\cite{MR2854787} require that $\rib{H}$ be plane; while Theorems~5.6 and~5.7 of~\cite{MR2854787} require the use of a multivariate extension of the Bollob\'as--Riordan polynomial and, for the latter, $\rib{G}$ is replaced by a more complicated ribbon graph. Similarly, in~\cite{EllisMoff} Ellis--Monaghan and Moffatt gave a formula for $R(\rib{G}\otimes \rib{H};x,y,z)$ that only holds when $(x-1)yz^2=1$. Thus this earlier work does not satisfactorily extend Brylawski's tensor product formula to the Bollob\'as--Riordan polynomial. Furthermore, we are not aware of any prior work on extending Brylawski's tensor product formula to the Krushkal polynomial~\cite{krushkalpoly}, $K(G\subset \Sigma ; x,y,a,b )$ of a graph embedded in a surface. (Tensor products  are defined in Section~\ref{s.bg}, the Bollob\'as--Riordan polynomial in Subsection~\ref{ss:brp}, and the Krushkal polynomial in Subsection~\ref{ss:kp}.)

The reason the tensor product formulas in~\cite{EllisMoff} and~\cite{MR2854787} required these compromises is that the Bollob\'as--Riordan polynomial has no known deletion-contraction relations that applies to every possible type of edge in a ribbon graph. Here we resolve this difficulty by taking a different approach. 
 Following the work of~\cite{HM, KMT}, we consider  ribbon graphs whose vertices and boundary components are coloured (or, equivalently, with graphs that are not-necessarily cellularly embedded in pseudo-surfaces). We realize these as ``packaged arrow presentations'' below. 

This language allows us to  complete the definition of the 2-sum and tensor product for graphs embedded in surfaces. (Previously, the 2-sum along two loops had not been defined.) Furthermore, this more general setting allows us to make use of  deletion-contraction constructions of the Bollob\'as--Riordan polynomial and  Krushkal polynomial, to obtain both 2-sum and tensor product formula for these polynomials. We note such formulas were first considered in the second-named author's PhD Thesis~\cite{thesis}, and the results presented here develops and refines that work.

\medskip

This paper is structured as follows. In Section~\ref{s.bg} we review arrow presentations (which describe graphs cellularly embedded in surfaces) and packaged arrow presentations (which describe graphs embedded in pseudo-surfaces), and define 2-sums and tensor products for them.  
In Section~\ref{s.tpf} we introduce a transition polynomial for packaged arrow presentations, provide 2-sum and tensor product formulas for this polynomial, and discuss how this relates to the Krushkal polynomial.
In Section~\ref{s:cases} we show how our general result specialises to give a tensor product formula for the Bollob\'as--Riordan polynomial and show that known formula for the transition polynomial, ribbon graph polynomial and Tutte polynomial can be recovered from the general result.

\section{Arrow presentations and packaged arrow presentations}\label{s.bg}

\subsection{Arrow presentations}\label{ss:ap}
Our interest is in \emph{topological Tutte polynomials} which are analogues of the Tutte polynomial for  graphs embedded in  surfaces. 
In the context of  topological Tutte polynomials, graphs (cellularly) embedded in closed surfaces are usually described using ribbon graphs. Here, however, we find it most natural to describe them using Chmutov's arrow presentations~\cite{MR2507944}. In this section we review arrow presentations. Additional background on arrow presentations, ribbon graphs and embedded graphs can be found in the book~\cite{graphsonsurfaces}.

An {\em arrow presentation} $\ar{G}$ consists of a set of circles (i.e., closed 1-manifolds), a set of arrows lying disjointly on the circles, and a set of labels. Each label is assigned to exactly two arrows and each arrow has a label.  The set of labels is called the \emph{edge set} and its elements are \emph{edges}.
 Thus an edge is identified with a pair of arrows, and we say that this pair of arrows \emph{constitutes} the edge. 
 We shall use $V(\ar{G})$ to denote the vertex set of $\ar{G}$, and $E(\ar{G})$ its edge set. 
 A vertex is \emph{incident} with an edge $e$ if there is an $e$-labelled arrow lying on it, and two vertices are \emph{adjacent} if they are both incident to the same edge. (These terms correspond with the standard graph terms.) A vertex is \emph{isolated} if it is not incident with any edge.
  At times we will express individual arrows in an arrow presentation in the form $\overrightarrow{pq}$ meaning that the arrow lies on the directed arc of a  vertex (i.e., a circle) of the arrow presentation from a  point $p$ to a point $q$. 
 See Figure~\ref{Arrp} for an example of an arrow presentation. It has three edges and two vertices.

Two arrow presentation are \emph{equivalent} if one can be obtained from the other by a diffeomorphism of the vertices that sends arrows to arrows, reversing the directions of both $e$-labelled arrows for some (potentially empty) subset of edges $e$, and a bijection between the edge sets (where the labelling of the arrows respects the bijection). Here we consider arrow presentations up to equivalence. For example, the arrow presentations shown in Figures~\ref{Arrp} and~\ref{rgarrp.2} are equivalent.

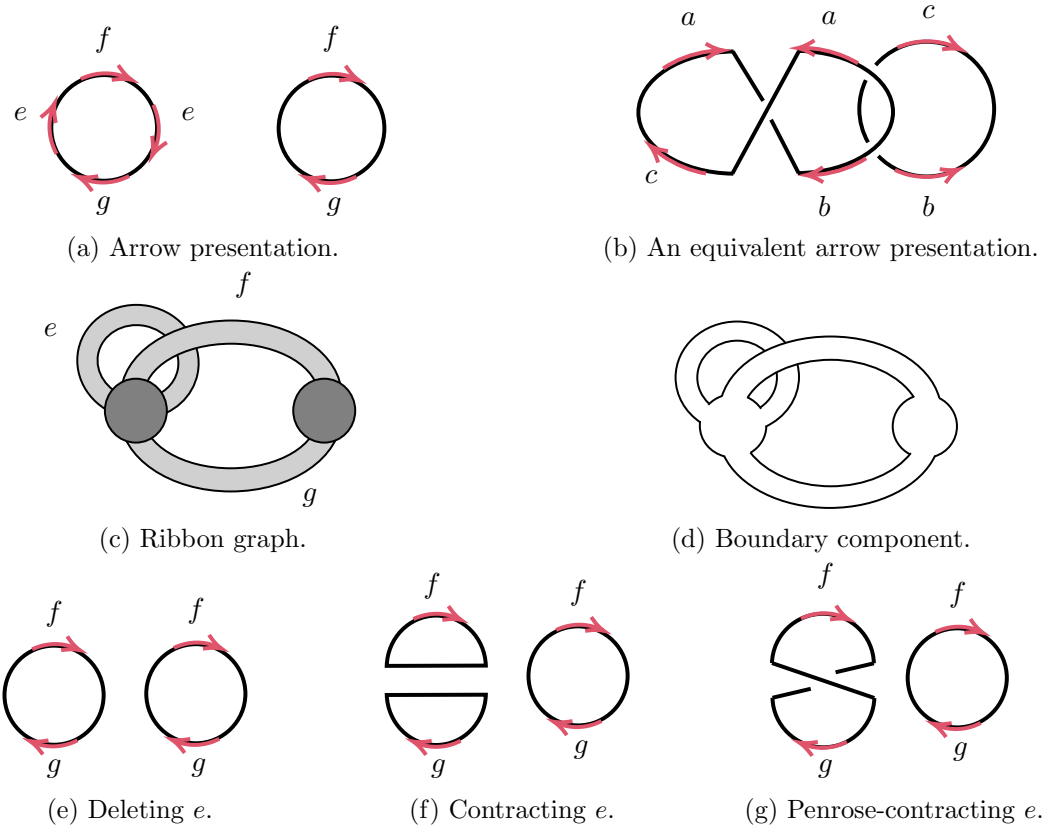
\begin{figure}[t!]
    \centering
      \begin{subfigure}[b]{.49\linewidth}
    \centering
    \tikzset{every picture/.style={line width=0.75pt}} 

\begin{tikzpicture}[x=0.8pt,y=0.8pt,yscale=-1,xscale=1]

\draw  [line width=1.5]  (252.39,77.84) .. controls (252.39,64.03) and (263.58,52.84) .. (277.39,52.84) .. controls (291.2,52.84) and (302.39,64.03) .. (302.39,77.84) .. controls (302.39,91.64) and (291.2,102.84) .. (277.39,102.84) .. controls (263.58,102.84) and (252.39,91.64) .. (252.39,77.84) -- cycle ;
\draw  [draw opacity=0][line width=1.5]  (266.36,54.7) .. controls (272,51.98) and (278.59,51.19) .. (285.01,52.97) .. controls (286.9,53.49) and (288.67,54.2) .. (290.32,55.08) -- (277.39,77.84) -- cycle ; \draw [color={rgb, 255:red, 223; green, 83; blue, 107 }  ,draw opacity=1 ][line width=1.5]    (266.36,54.7) .. controls (272,51.98) and (278.59,51.19) .. (285.01,52.97) .. controls (285.9,53.21) and (286.76,53.5) .. (287.6,53.83) ; \draw [shift={(290.32,55.08)}, rotate = 203.76] [color={rgb, 255:red, 223; green, 83; blue, 107 }  ,draw opacity=1 ][line width=1.5]    (8.53,-3.82) .. controls (5.42,-1.79) and (2.58,-0.52) .. (0,0) .. controls (2.58,0.52) and (5.42,1.79) .. (8.53,3.82)   ; 
\draw  [draw opacity=0][line width=1.5]  (289.18,100.6) .. controls (283.63,103.51) and (277.07,104.51) .. (270.59,102.94) .. controls (268.69,102.49) and (266.9,101.83) .. (265.23,101.01) -- (277.39,77.84) -- cycle ; \draw [color={rgb, 255:red, 223; green, 83; blue, 107 }  ,draw opacity=1 ][line width=1.5]    (289.18,100.6) .. controls (283.63,103.51) and (277.07,104.51) .. (270.59,102.94) .. controls (269.7,102.73) and (268.83,102.47) .. (267.99,102.17) ; \draw [shift={(265.23,101.01)}, rotate = 21.87] [color={rgb, 255:red, 223; green, 83; blue, 107 }  ,draw opacity=1 ][line width=1.5]    (8.53,-3.82) .. controls (5.42,-1.79) and (2.58,-0.52) .. (0,0) .. controls (2.58,0.52) and (5.42,1.79) .. (8.53,3.82)   ; 
\draw  [draw opacity=0][line width=1.5]  (300.61,66.97) .. controls (303.29,72.63) and (304.03,79.23) .. (302.21,85.64) .. controls (301.67,87.52) and (300.95,89.28) .. (300.05,90.92) -- (277.39,77.84) -- cycle ; \draw [color={rgb, 255:red, 223; green, 83; blue, 107 }  ,draw opacity=1 ][line width=1.5]    (300.61,66.97) .. controls (303.29,72.63) and (304.03,79.23) .. (302.21,85.64) .. controls (301.96,86.52) and (301.66,87.38) .. (301.33,88.21) ; \draw [shift={(300.05,90.92)}, rotate = 294.17] [color={rgb, 255:red, 223; green, 83; blue, 107 }  ,draw opacity=1 ][line width=1.5]    (8.53,-3.82) .. controls (5.42,-1.79) and (2.58,-0.52) .. (0,0) .. controls (2.58,0.52) and (5.42,1.79) .. (8.53,3.82)   ; 
\draw  [line width=1.5]  (359.89,78.09) .. controls (359.89,64.28) and (371.08,53.09) .. (384.89,53.09) .. controls (398.7,53.09) and (409.89,64.28) .. (409.89,78.09) .. controls (409.89,91.89) and (398.7,103.09) .. (384.89,103.09) .. controls (371.08,103.09) and (359.89,91.89) .. (359.89,78.09) -- cycle ;
\draw  [draw opacity=0][line width=1.5]  (373.86,54.95) .. controls (379.5,52.23) and (386.09,51.44) .. (392.51,53.22) .. controls (394.4,53.74) and (396.17,54.45) .. (397.82,55.33) -- (384.89,78.09) -- cycle ; \draw [color={rgb, 255:red, 223; green, 83; blue, 107 }  ,draw opacity=1 ][line width=1.5]    (373.86,54.95) .. controls (379.5,52.23) and (386.09,51.44) .. (392.51,53.22) .. controls (393.4,53.46) and (394.26,53.75) .. (395.1,54.08) ; \draw [shift={(397.82,55.33)}, rotate = 203.76] [color={rgb, 255:red, 223; green, 83; blue, 107 }  ,draw opacity=1 ][line width=1.5]    (8.53,-3.82) .. controls (5.42,-1.79) and (2.58,-0.52) .. (0,0) .. controls (2.58,0.52) and (5.42,1.79) .. (8.53,3.82)   ; 
\draw  [draw opacity=0][line width=1.5]  (396.68,100.85) .. controls (391.13,103.76) and (384.57,104.76) .. (378.09,103.19) .. controls (376.19,102.74) and (374.4,102.08) .. (372.73,101.26) -- (384.89,78.09) -- cycle ; \draw [color={rgb, 255:red, 223; green, 83; blue, 107 }  ,draw opacity=1 ][line width=1.5]    (396.68,100.85) .. controls (391.13,103.76) and (384.57,104.76) .. (378.09,103.19) .. controls (377.2,102.98) and (376.33,102.72) .. (375.49,102.42) ; \draw [shift={(372.73,101.26)}, rotate = 21.87] [color={rgb, 255:red, 223; green, 83; blue, 107 }  ,draw opacity=1 ][line width=1.5]    (8.53,-3.82) .. controls (5.42,-1.79) and (2.58,-0.52) .. (0,0) .. controls (2.58,0.52) and (5.42,1.79) .. (8.53,3.82)   ; 
\draw  [draw opacity=0][line width=1.5]  (254.88,90.1) .. controls (251.86,84.61) and (250.72,78.07) .. (252.15,71.57) .. controls (252.57,69.66) and (253.18,67.85) .. (253.97,66.16) -- (277.39,77.84) -- cycle ; \draw [color={rgb, 255:red, 223; green, 83; blue, 107 }  ,draw opacity=1 ][line width=1.5]    (254.88,90.1) .. controls (251.86,84.61) and (250.72,78.07) .. (252.15,71.57) .. controls (252.34,70.67) and (252.59,69.79) .. (252.87,68.94) ; \draw [shift={(253.97,66.16)}, rotate = 110.67] [color={rgb, 255:red, 223; green, 83; blue, 107 }  ,draw opacity=1 ][line width=1.5]    (8.53,-3.82) .. controls (5.42,-1.79) and (2.58,-0.52) .. (0,0) .. controls (2.58,0.52) and (5.42,1.79) .. (8.53,3.82)   ; 

\draw (233.25,66.65) node [anchor=north west][inner sep=0.75pt]  [font=\normalsize]  {$e$};
\draw (312.5,66.65) node [anchor=north west][inner sep=0.75pt]  [font=\normalsize]  {$e$};
\draw (271.5,28.65) node [anchor=north west][inner sep=0.75pt]  [font=\normalsize]  {$f$};
\draw (272.25,108.9) node [anchor=north west][inner sep=0.75pt]  [font=\normalsize]  {$g$};
\draw (379.25,28.15) node [anchor=north west][inner sep=0.75pt]  [font=\normalsize]  {$f$};
\draw (381.25,109.4) node [anchor=north west][inner sep=0.75pt]  [font=\normalsize]  {$g$};

\end{tikzpicture}
    \caption{Arrow presentation.}
    \label{Arrp}
    \end{subfigure}
     \begin{subfigure}[b]{.49\linewidth}
    \centering
  \tikzset{every picture/.style={line width=0.75pt}} 

\begin{tikzpicture}[x=0.75pt,y=0.75pt,yscale=-1,xscale=1]

\draw  [draw opacity=0][line width=1.5]  (362.26,60.78) .. controls (368.5,53.57) and (377.71,49) .. (388,49) .. controls (406.78,49) and (422,64.22) .. (422,83) .. controls (422,101.78) and (406.78,117) .. (388,117) .. controls (378.42,117) and (369.76,113.04) .. (363.58,106.66) -- (388,83) -- cycle ; \draw  [line width=1.5]  (362.26,60.78) .. controls (368.5,53.57) and (377.71,49) .. (388,49) .. controls (406.78,49) and (422,64.22) .. (422,83) .. controls (422,101.78) and (406.78,117) .. (388,117) .. controls (378.42,117) and (369.76,113.04) .. (363.58,106.66) ;  
\draw  [draw opacity=0][line width=1.5]  (290.46,115.6) .. controls (263.94,115.31) and (242.58,101.61) .. (242.58,84.76) .. controls (242.58,67.87) and (264.02,54.15) .. (290.61,53.92) -- (291.3,84.76) -- cycle ; \draw  [line width=1.5]  (290.46,115.6) .. controls (263.94,115.31) and (242.58,101.61) .. (242.58,84.76) .. controls (242.58,67.87) and (264.02,54.15) .. (290.61,53.92) ;  
\draw  [draw opacity=0][line width=1.5]  (323.11,53.62) .. controls (349.64,53.9) and (371,67.6) .. (371,84.46) .. controls (371,101.35) and (349.56,115.06) .. (322.96,115.3) -- (322.27,84.46) -- cycle ; \draw  [line width=1.5]  (323.11,53.62) .. controls (349.64,53.9) and (371,67.6) .. (371,84.46) .. controls (371,101.35) and (349.56,115.06) .. (322.96,115.3) ;  
\draw [line width=1.5]    (323.61,53.63) -- (289.97,115.59) ;
\draw [line width=1.5]    (290.21,53.92) -- (305.5,78.85) ;
\draw [line width=1.5]    (309.5,88.35) -- (323.37,115.29) ;
\draw  [draw opacity=0][line width=1.5]  (373.2,51.96) .. controls (380.74,48.62) and (389.5,47.76) .. (398.08,50.12) .. controls (400.57,50.81) and (402.92,51.74) .. (405.11,52.87) -- (388,83) -- cycle ; \draw [color={rgb, 255:red, 223; green, 83; blue, 107 }  ,draw opacity=1 ][line width=1.5]    (373.2,51.96) .. controls (380.74,48.62) and (389.5,47.76) .. (398.08,50.12) .. controls (399.57,50.54) and (401.02,51.04) .. (402.41,51.62) ; \draw [shift={(405.11,52.87)}, rotate = 204.24] [color={rgb, 255:red, 223; green, 83; blue, 107 }  ,draw opacity=1 ][line width=1.5]    (8.53,-3.82) .. controls (5.42,-1.79) and (2.58,-0.52) .. (0,0) .. controls (2.58,0.52) and (5.42,1.79) .. (8.53,3.82)   ; 
\draw  [draw opacity=0][line width=1.5]  (403.88,113.5) .. controls (396.47,117.11) and (387.74,118.28) .. (379.09,116.21) .. controls (376.57,115.61) and (374.19,114.77) .. (371.96,113.71) -- (388,83) -- cycle ; \draw [color={rgb, 255:red, 223; green, 83; blue, 107 }  ,draw opacity=1 ][line width=1.5]    (401.04,114.74) .. controls (394.29,117.35) and (386.66,118.02) .. (379.09,116.21) .. controls (376.57,115.61) and (374.19,114.77) .. (371.96,113.71) ;  \draw [shift={(403.88,113.5)}, rotate = 157.33] [color={rgb, 255:red, 223; green, 83; blue, 107 }  ,draw opacity=1 ][line width=1.5]    (8.53,-3.82) .. controls (5.42,-1.79) and (2.58,-0.52) .. (0,0) .. controls (2.58,0.52) and (5.42,1.79) .. (8.53,3.82)   ;
\draw  [draw opacity=0][line width=1.5]  (255.11,62.5) .. controls (261.47,58.16) and (269.37,55.06) .. (278.11,53.88) .. controls (280.5,53.56) and (282.87,53.4) .. (285.18,53.38) -- (281.51,85.77) -- cycle ; \draw [color={rgb, 255:red, 223; green, 83; blue, 107 }  ,draw opacity=1 ][line width=1.5]    (255.11,62.5) .. controls (261.47,58.16) and (269.37,55.06) .. (278.11,53.88) .. controls (279.49,53.7) and (280.85,53.57) .. (282.2,53.48) ; \draw [shift={(285.18,53.38)}, rotate = 177.52] [color={rgb, 255:red, 223; green, 83; blue, 107 }  ,draw opacity=1 ][line width=1.5]    (8.53,-3.82) .. controls (5.42,-1.79) and (2.58,-0.52) .. (0,0) .. controls (2.58,0.52) and (5.42,1.79) .. (8.53,3.82)   ; 
\draw  [draw opacity=0][line width=1.5]  (276.68,115.5) .. controls (269.07,114.3) and (261.2,111.13) .. (254.03,106) .. controls (252.07,104.59) and (250.24,103.09) .. (248.54,101.52) -- (273.43,80.46) -- cycle ; \draw [color={rgb, 255:red, 223; green, 83; blue, 107 }  ,draw opacity=1 ][line width=1.5]    (276.68,115.5) .. controls (269.07,114.3) and (261.2,111.13) .. (254.03,106) .. controls (252.9,105.19) and (251.82,104.35) .. (250.78,103.48) ; \draw [shift={(248.54,101.52)}, rotate = 40.82] [color={rgb, 255:red, 223; green, 83; blue, 107 }  ,draw opacity=1 ][line width=1.5]    (8.53,-3.82) .. controls (5.42,-1.79) and (2.58,-0.52) .. (0,0) .. controls (2.58,0.52) and (5.42,1.79) .. (8.53,3.82)   ; 
\draw  [draw opacity=0][line width=1.5]  (322.9,52.22) .. controls (330.6,51.78) and (338.96,53.21) .. (347.05,56.71) .. controls (349.27,57.66) and (351.38,58.74) .. (353.37,59.92) -- (333.51,85.77) -- cycle ; \draw [color={rgb, 255:red, 223; green, 83; blue, 107 }  ,draw opacity=1 ][line width=1.5]    (325.94,52.14) .. controls (332.77,52.19) and (340.01,53.66) .. (347.05,56.71) .. controls (349.27,57.66) and (351.38,58.74) .. (353.37,59.92) ;  \draw [shift={(322.9,52.22)}, rotate = 359.28] [color={rgb, 255:red, 223; green, 83; blue, 107 }  ,draw opacity=1 ][line width=1.5]    (8.53,-3.82) .. controls (5.42,-1.79) and (2.58,-0.52) .. (0,0) .. controls (2.58,0.52) and (5.42,1.79) .. (8.53,3.82)   ;
\draw  [draw opacity=0][line width=1.5]  (357.44,107.8) .. controls (350.99,112.02) and (343.03,114.96) .. (334.27,115.97) .. controls (331.87,116.24) and (329.51,116.36) .. (327.19,116.33) -- (331.51,84.02) -- cycle ; \draw [color={rgb, 255:red, 223; green, 83; blue, 107 }  ,draw opacity=1 ][line width=1.5]    (357.44,107.8) .. controls (350.99,112.02) and (343.03,114.96) .. (334.27,115.97) .. controls (332.89,116.12) and (331.52,116.23) .. (330.17,116.29) ; \draw [shift={(327.19,116.33)}, rotate = 358.65] [color={rgb, 255:red, 223; green, 83; blue, 107 }  ,draw opacity=1 ][line width=1.5]    (8.53,-3.82) .. controls (5.42,-1.79) and (2.58,-0.52) .. (0,0) .. controls (2.58,0.52) and (5.42,1.79) .. (8.53,3.82)   ; 
\draw  [draw opacity=0][line width=1.5]  (358.42,99.78) .. controls (355.61,94.83) and (354,89.1) .. (354,83) .. controls (354,77.77) and (355.18,72.81) .. (357.29,68.38) -- (388,83) -- cycle ; \draw  [line width=1.5]  (358.42,99.78) .. controls (355.61,94.83) and (354,89.1) .. (354,83) .. controls (354,77.77) and (355.18,72.81) .. (357.29,68.38) ;  

\draw (262.5,32.9) node [anchor=north west][inner sep=0.75pt]  [font=\normalsize]  {$a$};
\draw (333.5,32.4) node [anchor=north west][inner sep=0.75pt]  [font=\normalsize]  {$a$};
\draw (331.75,125.15) node [anchor=north west][inner sep=0.75pt]  [font=\normalsize]  {$b$};
\draw (384.25,125.15) node [anchor=north west][inner sep=0.75pt]  [font=\normalsize]  {$b$};
\draw (244.25,112.4) node [anchor=north west][inner sep=0.75pt]  [font=\normalsize]  {$c$};
\draw (384.25,29.4) node [anchor=north west][inner sep=0.75pt]  [font=\normalsize]  {$c$};

\end{tikzpicture}
    \caption{An equivalent arrow presentation.}
        \label{rgarrp.2}
    \end{subfigure}
    \begin{subfigure}[b]{.49\linewidth}
    \centering
    \tikzset{every picture/.style={line width=0.75pt}} 

\begin{tikzpicture}[x=0.95pt,y=0.95pt,yscale=-1,xscale=1]

\draw  [color={rgb, 255:red, 0; green, 0; blue, 0 }  ,draw opacity=1 ][fill={rgb, 255:red, 208; green, 208; blue, 208 }  ,fill opacity=1 ][line width=0.75]  (202.56,102) .. controls (190.99,98.42) and (183.64,87.26) .. (185.84,75.86) .. controls (188.21,63.58) and (200.78,55.69) .. (213.91,58.23) .. controls (227.05,60.77) and (235.77,72.78) .. (233.39,85.05) .. controls (231.57,94.52) and (223.68,101.38) .. (214.15,102.86) -- (212.54,94.93) .. controls (218.97,94.08) and (224.28,89.67) .. (225.47,83.52) .. controls (226.99,75.62) and (221.13,67.85) .. (212.38,66.16) .. controls (203.63,64.46) and (195.29,69.5) .. (193.77,77.39) .. controls (192.35,84.74) and (197.32,91.99) .. (205.07,94.32) -- cycle ;
\draw  [color={rgb, 255:red, 0; green, 0; blue, 0 }  ,draw opacity=1 ][fill={rgb, 255:red, 208; green, 208; blue, 208 }  ,fill opacity=1 ][line width=0.75]  (290.53,100.94) .. controls (290.55,101.29) and (290.56,101.64) .. (290.56,102) .. controls (290.56,118.63) and (270.86,132.12) .. (246.56,132.12) .. controls (222.26,132.12) and (202.56,118.63) .. (202.56,102) .. controls (202.56,101.85) and (202.56,101.7) .. (202.56,101.55) -- (211.81,101.64) .. controls (211.81,101.76) and (211.8,101.88) .. (211.8,102) .. controls (211.8,113.53) and (227.36,122.88) .. (246.56,122.88) .. controls (265.75,122.88) and (281.31,113.53) .. (281.31,102) .. controls (281.31,101.72) and (281.31,101.44) .. (281.29,101.17) -- cycle ;
\draw  [color={rgb, 255:red, 0; green, 0; blue, 0 }  ,draw opacity=1 ][fill={rgb, 255:red, 208; green, 208; blue, 208 }  ,fill opacity=1 ][line width=0.75]  (202.78,95.18) .. controls (202.76,94.83) and (202.75,94.48) .. (202.75,94.13) .. controls (202.75,77.49) and (222.45,64) .. (246.75,64) .. controls (271.05,64) and (290.75,77.49) .. (290.75,94.13) .. controls (290.75,94.27) and (290.75,94.42) .. (290.75,94.57) -- (281.5,94.48) .. controls (281.5,94.36) and (281.5,94.24) .. (281.5,94.13) .. controls (281.5,82.59) and (265.94,73.25) .. (246.75,73.25) .. controls (227.56,73.25) and (212,82.59) .. (212,94.13) .. controls (212,94.4) and (212,94.68) .. (212.02,94.96) -- cycle ;
\draw  [color={rgb, 255:red, 0; green, 0; blue, 0 }  ,draw opacity=1 ][fill={rgb, 255:red, 128; green, 128; blue, 128 }  ,fill opacity=1 ][line width=0.75]  (196.45,99.92) .. controls (196.45,92.89) and (201.98,87.19) .. (208.79,87.19) .. controls (215.6,87.19) and (221.12,92.89) .. (221.12,99.92) .. controls (221.12,106.95) and (215.6,112.65) .. (208.79,112.65) .. controls (201.98,112.65) and (196.45,106.95) .. (196.45,99.92) -- cycle ;
\draw  [color={rgb, 255:red, 0; green, 0; blue, 0 }  ,draw opacity=1 ][fill={rgb, 255:red, 128; green, 128; blue, 128 }  ,fill opacity=1 ][line width=0.75]  (271.45,99.92) .. controls (271.45,92.89) and (276.98,87.19) .. (283.79,87.19) .. controls (290.6,87.19) and (296.12,92.89) .. (296.12,99.92) .. controls (296.12,106.95) and (290.6,112.65) .. (283.79,112.65) .. controls (276.98,112.65) and (271.45,106.95) .. (271.45,99.92) -- cycle ;

\draw (171,63.9) node [anchor=north west][inner sep=0.75pt]    {$e$};
\draw (247,43.4) node [anchor=north west][inner sep=0.75pt]    {$f$};
\draw (274,129.9) node [anchor=north west][inner sep=0.75pt]    {$g$};

\end{tikzpicture}  
    \caption{Ribbon graph.}
        \label{rgarrp.3}
    \end{subfigure}
        \begin{subfigure}[b]{.49\linewidth}
    \centering
    \tikzset{every picture/.style={line width=0.75pt}} 

\begin{tikzpicture}[x=0.95pt,y=0.95pt,yscale=-1,xscale=1]

\draw  [draw opacity=0] (204.42,111.87) .. controls (198.79,110.43) and (194.65,105.54) .. (194.65,99.73) .. controls (194.65,99.09) and (194.69,98.47) .. (194.79,97.86) -- (207.88,99.73) -- cycle ; \draw   (204.42,111.87) .. controls (198.79,110.43) and (194.65,105.54) .. (194.65,99.73) .. controls (194.65,99.09) and (194.69,98.47) .. (194.79,97.86) ;  
\draw  [draw opacity=0] (221.12,99.92) .. controls (221.05,104.37) and (218.55,108.26) .. (214.82,110.44) -- (207.88,99.73) -- cycle ; \draw   (221.12,99.92) .. controls (221.05,104.37) and (218.55,108.26) .. (214.82,110.44) ;  
\draw  [draw opacity=0] (213.43,88.3) .. controls (215.73,89.31) and (217.67,90.92) .. (219.03,92.93) -- (207.88,99.73) -- cycle ; \draw   (213.43,88.3) .. controls (215.73,89.31) and (217.67,90.92) .. (219.03,92.93) ;  
\draw  [draw opacity=0] (198,91.35) .. controls (199.5,89.76) and (201.41,88.53) .. (203.58,87.82) -- (207.88,99.73) -- cycle ; \draw   (198,91.35) .. controls (199.5,89.76) and (201.41,88.53) .. (203.58,87.82) ;  
\draw  [draw opacity=0] (194.84,98.33) .. controls (188.86,94.26) and (184.99,87.77) .. (184.99,80.46) .. controls (184.99,68.14) and (195.96,58.16) .. (209.5,58.16) .. controls (217.23,58.16) and (224.13,61.42) .. (228.62,66.52) -- (209.5,80.46) -- cycle ; \draw   (194.84,98.33) .. controls (188.86,94.26) and (184.99,87.77) .. (184.99,80.46) .. controls (184.99,68.14) and (195.96,58.16) .. (209.5,58.16) .. controls (217.23,58.16) and (224.13,61.42) .. (228.62,66.52) ;  
\draw  [draw opacity=0] (198.18,90.73) .. controls (195.37,88.09) and (193.64,84.46) .. (193.64,80.46) .. controls (193.64,72.36) and (200.74,65.79) .. (209.5,65.79) .. controls (213.8,65.79) and (217.7,67.38) .. (220.56,69.95) -- (209.5,80.46) -- cycle ; \draw   (198.18,90.73) .. controls (195.37,88.09) and (193.64,84.46) .. (193.64,80.46) .. controls (193.64,72.36) and (200.74,65.79) .. (209.5,65.79) .. controls (213.8,65.79) and (217.7,67.38) .. (220.56,69.95) ;  
\draw  [draw opacity=0] (278.39,111.1) .. controls (274.27,109.01) and (271.45,104.79) .. (271.45,99.92) .. controls (271.45,94.55) and (274.87,89.96) .. (279.68,88.16) -- (284.24,99.92) -- cycle ; \draw   (278.39,111.1) .. controls (274.27,109.01) and (271.45,104.79) .. (271.45,99.92) .. controls (271.45,94.55) and (274.87,89.96) .. (279.68,88.16) ;  
\draw  [draw opacity=0] (289.87,88.62) .. controls (294.11,90.66) and (297.03,94.95) .. (297.03,99.92) .. controls (297.03,105.55) and (293.27,110.31) .. (288.08,111.92) -- (284.24,99.92) -- cycle ; \draw   (289.87,88.62) .. controls (294.11,90.66) and (297.03,94.95) .. (297.03,99.92) .. controls (297.03,105.55) and (293.27,110.31) .. (288.08,111.92) ;  
\draw  [draw opacity=0] (203.45,87.91) .. controls (207.66,74.32) and (225.39,64.13) .. (246.62,64.13) .. controls (268.21,64.13) and (286.18,74.67) .. (290,88.59) -- (246.62,94.13) -- cycle ; \draw   (203.45,87.91) .. controls (207.66,74.32) and (225.39,64.13) .. (246.62,64.13) .. controls (268.21,64.13) and (286.18,74.67) .. (290,88.59) ;  
\draw  [draw opacity=0] (288.32,111.62) .. controls (282.62,123.7) and (265.97,132.44) .. (246.3,132.44) .. controls (226.67,132.44) and (210.03,123.72) .. (204.3,111.66) -- (246.3,102.44) -- cycle ; \draw   (288.32,111.62) .. controls (282.62,123.7) and (265.97,132.44) .. (246.3,132.44) .. controls (226.67,132.44) and (210.03,123.72) .. (204.3,111.66) ;  
\draw  [draw opacity=0] (213.77,88.31) .. controls (219.43,79.52) and (232.09,73.41) .. (246.81,73.41) .. controls (261.61,73.41) and (274.33,79.59) .. (279.94,88.45) -- (246.81,98.61) -- cycle ; \draw   (213.77,88.31) .. controls (219.43,79.52) and (232.09,73.41) .. (246.81,73.41) .. controls (261.61,73.41) and (274.33,79.59) .. (279.94,88.45) ;  
\draw  [draw opacity=0] (278.67,110.58) .. controls (272.55,118.46) and (260.58,123.81) .. (246.81,123.81) .. controls (232.9,123.81) and (220.83,118.35) .. (214.77,110.34) -- (246.81,98.61) -- cycle ; \draw   (278.67,110.58) .. controls (272.55,118.46) and (260.58,123.81) .. (246.81,123.81) .. controls (232.9,123.81) and (220.83,118.35) .. (214.77,110.34) ;  
\draw  [draw opacity=0] (225.19,78.35) .. controls (225.3,79.04) and (225.35,79.74) .. (225.35,80.46) .. controls (225.35,85.37) and (222.75,89.71) .. (218.74,92.37) -- (209.5,80.46) -- cycle ; \draw   (225.19,78.35) .. controls (225.3,79.04) and (225.35,79.74) .. (225.35,80.46) .. controls (225.35,85.37) and (222.75,89.71) .. (218.74,92.37) ;  
\draw  [draw opacity=0] (233.51,75.21) .. controls (233.91,76.82) and (234.13,78.5) .. (234.13,80.22) .. controls (234.13,88.91) and (228.67,96.44) .. (220.69,100.12) -- (209.62,80.22) -- cycle ; \draw   (233.51,75.21) .. controls (233.91,76.82) and (234.13,78.5) .. (234.13,80.22) .. controls (234.13,88.91) and (228.67,96.44) .. (220.69,100.12) ;

\end{tikzpicture}
    \caption{Boundary component.}
        \label{rgarrp.4}
    \end{subfigure}
        \begin{subfigure}[b]{.3\linewidth}
    \centering
     \tikzset{every picture/.style={line width=0.75pt}} 

\begin{tikzpicture}[x=0.75pt,y=0.75pt,yscale=-1,xscale=1]

\draw  [line width=1.5]  (252.39,77.84) .. controls (252.39,64.03) and (263.58,52.84) .. (277.39,52.84) .. controls (291.2,52.84) and (302.39,64.03) .. (302.39,77.84) .. controls (302.39,91.64) and (291.2,102.84) .. (277.39,102.84) .. controls (263.58,102.84) and (252.39,91.64) .. (252.39,77.84) -- cycle ;
\draw  [draw opacity=0][line width=1.5]  (266.36,54.7) .. controls (272,51.98) and (278.59,51.19) .. (285.01,52.97) .. controls (286.9,53.49) and (288.67,54.2) .. (290.32,55.08) -- (277.39,77.84) -- cycle ; \draw [color={rgb, 255:red, 223; green, 83; blue, 107 }  ,draw opacity=1 ][line width=1.5]    (266.36,54.7) .. controls (272,51.98) and (278.59,51.19) .. (285.01,52.97) .. controls (285.9,53.21) and (286.76,53.5) .. (287.6,53.83) ; \draw [shift={(290.32,55.08)}, rotate = 203.76] [color={rgb, 255:red, 223; green, 83; blue, 107 }  ,draw opacity=1 ][line width=1.5]    (8.53,-3.82) .. controls (5.42,-1.79) and (2.58,-0.52) .. (0,0) .. controls (2.58,0.52) and (5.42,1.79) .. (8.53,3.82)   ; 
\draw  [draw opacity=0][line width=1.5]  (289.18,100.6) .. controls (283.63,103.51) and (277.07,104.51) .. (270.59,102.94) .. controls (268.69,102.49) and (266.9,101.83) .. (265.23,101.01) -- (277.39,77.84) -- cycle ; \draw [color={rgb, 255:red, 223; green, 83; blue, 107 }  ,draw opacity=1 ][line width=1.5]    (289.18,100.6) .. controls (283.63,103.51) and (277.07,104.51) .. (270.59,102.94) .. controls (269.7,102.73) and (268.83,102.47) .. (267.99,102.17) ; \draw [shift={(265.23,101.01)}, rotate = 21.87] [color={rgb, 255:red, 223; green, 83; blue, 107 }  ,draw opacity=1 ][line width=1.5]    (8.53,-3.82) .. controls (5.42,-1.79) and (2.58,-0.52) .. (0,0) .. controls (2.58,0.52) and (5.42,1.79) .. (8.53,3.82)   ; 
\draw  [line width=1.5]  (323.89,77.09) .. controls (323.89,63.28) and (335.08,52.09) .. (348.89,52.09) .. controls (362.7,52.09) and (373.89,63.28) .. (373.89,77.09) .. controls (373.89,90.89) and (362.7,102.09) .. (348.89,102.09) .. controls (335.08,102.09) and (323.89,90.89) .. (323.89,77.09) -- cycle ;
\draw  [draw opacity=0][line width=1.5]  (337.86,53.95) .. controls (343.5,51.23) and (350.09,50.44) .. (356.51,52.22) .. controls (358.4,52.74) and (360.17,53.45) .. (361.82,54.33) -- (348.89,77.09) -- cycle ; \draw [color={rgb, 255:red, 223; green, 83; blue, 107 }  ,draw opacity=1 ][line width=1.5]    (337.86,53.95) .. controls (343.5,51.23) and (350.09,50.44) .. (356.51,52.22) .. controls (357.4,52.46) and (358.26,52.75) .. (359.1,53.08) ; \draw [shift={(361.82,54.33)}, rotate = 203.76] [color={rgb, 255:red, 223; green, 83; blue, 107 }  ,draw opacity=1 ][line width=1.5]    (8.53,-3.82) .. controls (5.42,-1.79) and (2.58,-0.52) .. (0,0) .. controls (2.58,0.52) and (5.42,1.79) .. (8.53,3.82)   ; 
\draw  [draw opacity=0][line width=1.5]  (360.68,99.85) .. controls (355.13,102.76) and (348.57,103.76) .. (342.09,102.19) .. controls (340.19,101.74) and (338.4,101.08) .. (336.73,100.26) -- (348.89,77.09) -- cycle ; \draw [color={rgb, 255:red, 223; green, 83; blue, 107 }  ,draw opacity=1 ][line width=1.5]    (360.68,99.85) .. controls (355.13,102.76) and (348.57,103.76) .. (342.09,102.19) .. controls (341.2,101.98) and (340.33,101.72) .. (339.49,101.42) ; \draw [shift={(336.73,100.26)}, rotate = 21.87] [color={rgb, 255:red, 223; green, 83; blue, 107 }  ,draw opacity=1 ][line width=1.5]    (8.53,-3.82) .. controls (5.42,-1.79) and (2.58,-0.52) .. (0,0) .. controls (2.58,0.52) and (5.42,1.79) .. (8.53,3.82)   ; 

\draw (271.5,28.65) node [anchor=north west][inner sep=0.75pt]  [font=\normalsize]  {$f$};
\draw (272.25,108.9) node [anchor=north west][inner sep=0.75pt]  [font=\normalsize]  {$g$};
\draw (343.25,27.15) node [anchor=north west][inner sep=0.75pt]  [font=\normalsize]  {$f$};
\draw (345.25,108.4) node [anchor=north west][inner sep=0.75pt]  [font=\normalsize]  {$g$};

\end{tikzpicture}
    \caption{Deleting $e$.}
        \label{rgarrp.5}
    \end{subfigure}
        \begin{subfigure}[b]{.3\linewidth}
         \tikzset{every picture/.style={line width=0.75pt}} 

\begin{tikzpicture}[x=0.75pt,y=0.75pt,yscale=-1,xscale=1]

\draw  [line width=1.5]  (301.38,86.59) .. controls (300.99,100.68) and (289.95,111.97) .. (276.39,111.97) .. controls (262.88,111.97) and (251.87,100.75) .. (251.41,86.73) -- cycle ;
\draw  [draw opacity=0][line width=1.5]  (288.68,109.83) .. controls (283.13,112.73) and (276.57,113.73) .. (270.09,112.17) .. controls (268.19,111.71) and (266.4,111.06) .. (264.72,110.23) -- (276.89,87.06) -- cycle ; \draw [color={rgb, 255:red, 223; green, 83; blue, 107 }  ,draw opacity=1 ][line width=1.5]    (288.68,109.83) .. controls (283.13,112.73) and (276.57,113.73) .. (270.09,112.17) .. controls (269.2,111.95) and (268.33,111.7) .. (267.49,111.4) ; \draw [shift={(264.72,110.23)}, rotate = 21.87] [color={rgb, 255:red, 223; green, 83; blue, 107 }  ,draw opacity=1 ][line width=1.5]    (8.53,-3.82) .. controls (5.42,-1.79) and (2.58,-0.52) .. (0,0) .. controls (2.58,0.52) and (5.42,1.79) .. (8.53,3.82)   ; 
\draw  [line width=1.5]  (322.89,77.09) .. controls (322.89,63.28) and (334.08,52.09) .. (347.89,52.09) .. controls (361.7,52.09) and (372.89,63.28) .. (372.89,77.09) .. controls (372.89,90.89) and (361.7,102.09) .. (347.89,102.09) .. controls (334.08,102.09) and (322.89,90.89) .. (322.89,77.09) -- cycle ;
\draw  [draw opacity=0][line width=1.5]  (336.86,53.95) .. controls (342.5,51.23) and (349.09,50.44) .. (355.51,52.22) .. controls (357.4,52.74) and (359.17,53.45) .. (360.82,54.33) -- (347.89,77.09) -- cycle ; \draw [color={rgb, 255:red, 223; green, 83; blue, 107 }  ,draw opacity=1 ][line width=1.5]    (336.86,53.95) .. controls (342.5,51.23) and (349.09,50.44) .. (355.51,52.22) .. controls (356.4,52.46) and (357.26,52.75) .. (358.1,53.08) ; \draw [shift={(360.82,54.33)}, rotate = 203.76] [color={rgb, 255:red, 223; green, 83; blue, 107 }  ,draw opacity=1 ][line width=1.5]    (8.53,-3.82) .. controls (5.42,-1.79) and (2.58,-0.52) .. (0,0) .. controls (2.58,0.52) and (5.42,1.79) .. (8.53,3.82)   ; 
\draw  [draw opacity=0][line width=1.5]  (359.68,99.85) .. controls (354.13,102.76) and (347.57,103.76) .. (341.09,102.19) .. controls (339.19,101.74) and (337.4,101.08) .. (335.73,100.26) -- (347.89,77.09) -- cycle ; \draw [color={rgb, 255:red, 223; green, 83; blue, 107 }  ,draw opacity=1 ][line width=1.5]    (359.68,99.85) .. controls (354.13,102.76) and (347.57,103.76) .. (341.09,102.19) .. controls (340.2,101.98) and (339.33,101.72) .. (338.49,101.42) ; \draw [shift={(335.73,100.26)}, rotate = 21.87] [color={rgb, 255:red, 223; green, 83; blue, 107 }  ,draw opacity=1 ][line width=1.5]    (8.53,-3.82) .. controls (5.42,-1.79) and (2.58,-0.52) .. (0,0) .. controls (2.58,0.52) and (5.42,1.79) .. (8.53,3.82)   ; 
\draw  [line width=1.5]  (251.4,72.04) .. controls (251.79,57.95) and (262.83,46.66) .. (276.39,46.66) .. controls (289.9,46.66) and (300.91,57.87) .. (301.38,71.89) -- cycle ;
\draw  [draw opacity=0][line width=1.5]  (264.86,48.42) .. controls (270.5,45.7) and (277.09,44.92) .. (283.51,46.69) .. controls (285.4,47.21) and (287.17,47.93) .. (288.81,48.81) -- (275.89,71.56) -- cycle ; \draw [color={rgb, 255:red, 223; green, 83; blue, 107 }  ,draw opacity=1 ][line width=1.5]    (264.86,48.42) .. controls (270.5,45.7) and (277.09,44.92) .. (283.51,46.69) .. controls (284.4,46.94) and (285.26,47.23) .. (286.09,47.55) ; \draw [shift={(288.81,48.81)}, rotate = 203.76] [color={rgb, 255:red, 223; green, 83; blue, 107 }  ,draw opacity=1 ][line width=1.5]    (8.53,-3.82) .. controls (5.42,-1.79) and (2.58,-0.52) .. (0,0) .. controls (2.58,0.52) and (5.42,1.79) .. (8.53,3.82)   ; 

\draw (268.5,23.65) node [anchor=north west][inner sep=0.75pt]  [font=\normalsize]  {$f$};
\draw (272.25,117.9) node [anchor=north west][inner sep=0.75pt]  [font=\normalsize]  {$g$};
\draw (342.25,27.15) node [anchor=north west][inner sep=0.75pt]  [font=\normalsize]  {$f$};
\draw (344.25,108.4) node [anchor=north west][inner sep=0.75pt]  [font=\normalsize]  {$g$};

\end{tikzpicture}
    \centering
    \caption{Contracting $e$.}
        \label{rgarrp.6}
    \end{subfigure}
        \begin{subfigure}[b]{.3\linewidth}
    \centering
     \tikzset{every picture/.style={line width=0.75pt}} 

\begin{tikzpicture}[x=0.75pt,y=0.75pt,yscale=-1,xscale=1]

\draw  [draw opacity=0][line width=1.5]  (302.08,88.26) .. controls (301.71,102.06) and (290.35,113.13) .. (276.39,113.13) .. controls (262.39,113.13) and (251.01,101.99) .. (250.7,88.14) -- (276.39,87.56) -- cycle ; \draw  [line width=1.5]  (302.08,88.26) .. controls (301.71,102.06) and (290.35,113.13) .. (276.39,113.13) .. controls (262.39,113.13) and (251.01,101.99) .. (250.7,88.14) ;  
\draw  [draw opacity=0][line width=1.5]  (250.45,70.61) .. controls (250.83,56.82) and (262.18,45.75) .. (276.14,45.75) .. controls (290.14,45.75) and (301.53,56.88) .. (301.83,70.74) -- (276.14,71.31) -- cycle ; \draw  [line width=1.5]  (250.45,70.61) .. controls (250.83,56.82) and (262.18,45.75) .. (276.14,45.75) .. controls (290.14,45.75) and (301.53,56.88) .. (301.83,70.74) ;  
\draw  [draw opacity=0][line width=1.5]  (288.18,110.33) .. controls (282.63,113.23) and (276.07,114.23) .. (269.59,112.67) .. controls (267.69,112.21) and (265.9,111.56) .. (264.22,110.73) -- (276.39,87.56) -- cycle ; \draw [color={rgb, 255:red, 223; green, 83; blue, 107 }  ,draw opacity=1 ][line width=1.5]    (288.18,110.33) .. controls (282.63,113.23) and (276.07,114.23) .. (269.59,112.67) .. controls (268.7,112.45) and (267.83,112.2) .. (266.99,111.9) ; \draw [shift={(264.22,110.73)}, rotate = 21.87] [color={rgb, 255:red, 223; green, 83; blue, 107 }  ,draw opacity=1 ][line width=1.5]    (8.53,-3.82) .. controls (5.42,-1.79) and (2.58,-0.52) .. (0,0) .. controls (2.58,0.52) and (5.42,1.79) .. (8.53,3.82)   ; 
\draw  [line width=1.5]  (318.89,78.09) .. controls (318.89,64.28) and (330.08,53.09) .. (343.89,53.09) .. controls (357.7,53.09) and (368.89,64.28) .. (368.89,78.09) .. controls (368.89,91.89) and (357.7,103.09) .. (343.89,103.09) .. controls (330.08,103.09) and (318.89,91.89) .. (318.89,78.09) -- cycle ;
\draw  [draw opacity=0][line width=1.5]  (332.86,54.95) .. controls (338.5,52.23) and (345.09,51.44) .. (351.51,53.22) .. controls (353.4,53.74) and (355.17,54.45) .. (356.82,55.33) -- (343.89,78.09) -- cycle ; \draw [color={rgb, 255:red, 223; green, 83; blue, 107 }  ,draw opacity=1 ][line width=1.5]    (332.86,54.95) .. controls (338.5,52.23) and (345.09,51.44) .. (351.51,53.22) .. controls (352.4,53.46) and (353.26,53.75) .. (354.1,54.08) ; \draw [shift={(356.82,55.33)}, rotate = 203.76] [color={rgb, 255:red, 223; green, 83; blue, 107 }  ,draw opacity=1 ][line width=1.5]    (8.53,-3.82) .. controls (5.42,-1.79) and (2.58,-0.52) .. (0,0) .. controls (2.58,0.52) and (5.42,1.79) .. (8.53,3.82)   ; 
\draw  [draw opacity=0][line width=1.5]  (355.68,100.85) .. controls (350.13,103.76) and (343.57,104.76) .. (337.09,103.19) .. controls (335.19,102.74) and (333.4,102.08) .. (331.73,101.26) -- (343.89,78.09) -- cycle ; \draw [color={rgb, 255:red, 223; green, 83; blue, 107 }  ,draw opacity=1 ][line width=1.5]    (355.68,100.85) .. controls (350.13,103.76) and (343.57,104.76) .. (337.09,103.19) .. controls (336.2,102.98) and (335.33,102.72) .. (334.49,102.42) ; \draw [shift={(331.73,101.26)}, rotate = 21.87] [color={rgb, 255:red, 223; green, 83; blue, 107 }  ,draw opacity=1 ][line width=1.5]    (8.53,-3.82) .. controls (5.42,-1.79) and (2.58,-0.52) .. (0,0) .. controls (2.58,0.52) and (5.42,1.79) .. (8.53,3.82)   ; 
\draw  [draw opacity=0][line width=1.5]  (265.11,48.17) .. controls (270.75,45.45) and (277.34,44.67) .. (283.76,46.44) .. controls (285.65,46.96) and (287.42,47.68) .. (289.06,48.56) -- (276.14,71.31) -- cycle ; \draw [color={rgb, 255:red, 223; green, 83; blue, 107 }  ,draw opacity=1 ][line width=1.5]    (265.11,48.17) .. controls (270.75,45.45) and (277.34,44.67) .. (283.76,46.44) .. controls (284.65,46.69) and (285.51,46.98) .. (286.34,47.3) ; \draw [shift={(289.06,48.56)}, rotate = 203.76] [color={rgb, 255:red, 223; green, 83; blue, 107 }  ,draw opacity=1 ][line width=1.5]    (8.53,-3.82) .. controls (5.42,-1.79) and (2.58,-0.52) .. (0,0) .. controls (2.58,0.52) and (5.42,1.79) .. (8.53,3.82)   ; 
\draw [line width=1.5]    (250.45,70.61) -- (302.08,88.26) ;
\draw [line width=1.5]    (250.7,88.14) -- (270,83.6) ;
\draw [line width=1.5]    (282.53,75.28) -- (301.83,70.74) ;

\draw (271.5,18.65) node [anchor=north west][inner sep=0.75pt]  [font=\normalsize]  {$f$};
\draw (272.25,117.9) node [anchor=north west][inner sep=0.75pt]  [font=\normalsize]  {$g$};
\draw (338.25,28.15) node [anchor=north west][inner sep=0.75pt]  [font=\normalsize]  {$f$};
\draw (340.25,109.4) node [anchor=north west][inner sep=0.75pt]  [font=\normalsize]  {$g$};

\end{tikzpicture}
    \caption{Penrose-contracting $e$.}
        \label{rgarrp.7}
    \end{subfigure}

    \caption{An arrow presentation, its corresponding ribbon graph, and the results of ribbon graph operations.}
    \label{rgarrp}
\end{figure}

\medskip

Arrow presentations describe ribbon graphs (which in turn describe graphs cellularly embedded in closed surfaces). We briefly review the connection here. (A reader may safely skip this discussion of ribbon graphs. It is included only for context.)
 A \emph{ribbon graph} $\rib{G}=(V,E)$ is a surface with boundary represented by the union of a set of discs $V$, called the \emph{vertices}, and another set of discs $E$, called the \emph{edges}, satisfying the following conditions: (i) the vertices and edges intersect in disjoint arcs; (ii) each such arc may only lie on the boundaries of precisely one   vertex and one edge; (iii) every edge contains precisely two such arcs. 
   
Ribbon graphs can be represented by arrow presentations. Given a ribbon graph, obtain its arrow presentation as follows: for each edge $e$, arbitrarily orient its boundary, place an arrow labelled $e$ in the direction of the orientation along each arc where the edge intersects a vertex. Taking the boundaries of the vertices together with the labelled arrows results in an arrow presentation. 
Conversely, every arrow presentation gives rise to a ribbon graph.
Given an arrow presentation, obtain a ribbon graph by identifying each circle with the boundary of a vertex disc, then for each label $e$ take a disc, arbitrarily orient its boundary, and identify disjoint arcs on its boundary to the two arrows labelled $e$ such that the direction of the arrows agree with that of the boundary. Figures~\ref{Arrp} and~\ref{rgarrp.3} provide an example of a ribbon graph and its representation as an arrow presentation.
These processes set up a 1-1 correspondence between (equivalence classes of) arrow presentations and (equivalence classes of) ribbon graphs. Further details can be found in~\cite{MR2507944, graphsonsurfaces}.

\medskip

While boundary components are immediately defined for ribbon graphs, being the boundary components of the underlying surface, they are a little awkward to define and work with in arrow presentations. (This is the one drawback here of working with arrow presentations, but the benefits outweigh the costs).
Let $\ar{G}$ be an arrow presentation. Construct a set of closed curves as follows. 
Let $\overrightarrow{p_{e,1}p_{e,2}}$ and $\overrightarrow{q_{e,1}q_{e,2}}$ be the two arrows constituting an edge $e$. 
For each edge $e$, delete the arrows together with the interiors of the vertex arcs they lie on. Next add a curve joining $q_{e,2}$
  and  $p_{e,1}$, and another one joining  $p_{e,2}$ and $q_{e,1}$. This results in a set of closed curves. Each curve is a \emph{boundary component} of $\ar{G}$.  We use $B(\ar{G})$ to denote the set of all boundary components of $\ar{G}$.
 We say a curve containing any of the points $p_{e,1}$, $p_{e,2}$, $q_{e,1}$, or $q_{e,2}$ is \emph{incident} to the edge $e$, and two boundary components are \emph{adjacent} if they are incident to the same edge. (These terms correspond exactly to their standard usage for ribbon graphs.) 
 See Figure~\ref{rgarrp.4} which shows the single boundary component of the arrow presentation of Figure~\ref{Arrp}.

\medskip

\begin{table}[t!]
\centering
\begin{tabular}{cccc}
 \tikzset{every picture/.style={line width=0.75pt}} 

\begin{tikzpicture}[x=0.5pt,y=0.5pt,yscale=-1,xscale=1]

\draw  [draw opacity=0][line width=1.5]  (87.13,46.59) .. controls (87.2,46.59) and (87.27,46.59) .. (87.34,46.59) .. controls (108.57,46.59) and (125.77,62.26) .. (125.77,81.59) .. controls (125.77,100.88) and (108.64,116.53) .. (87.47,116.59) -- (87.34,81.59) -- cycle ; \draw  [color={rgb, 255:red, 0; green, 0; blue, 0 }  ,draw opacity=1 ][line width=1.5]  (87.13,46.59) .. controls (87.2,46.59) and (87.27,46.59) .. (87.34,46.59) .. controls (108.57,46.59) and (125.77,62.26) .. (125.77,81.59) .. controls (125.77,100.88) and (108.64,116.53) .. (87.47,116.59) ;  
\draw  [draw opacity=0][line width=1.5]  (207.14,116.54) .. controls (185.93,116.52) and (168.75,100.86) .. (168.75,81.54) .. controls (168.75,62.23) and (185.93,46.57) .. (207.14,46.54) -- (207.19,81.54) -- cycle ; \draw  [color={rgb, 255:red, 0; green, 0; blue, 0 }  ,draw opacity=1 ][line width=1.5]  (207.14,116.54) .. controls (185.93,116.52) and (168.75,100.86) .. (168.75,81.54) .. controls (168.75,62.23) and (185.93,46.57) .. (207.14,46.54) ;  
\draw  [draw opacity=0][line width=1.5]  (172.73,66.52) .. controls (169.64,72.21) and (168.15,78.68) .. (168.76,85.4) .. controls (169.18,89.95) and (170.52,94.22) .. (172.61,98.06) -- (208.44,81.79) -- cycle ; \draw [color={rgb, 255:red, 223; green, 83; blue, 107 }  ,draw opacity=1 ][line width=1.5]    (172.73,66.52) .. controls (169.64,72.21) and (168.15,78.68) .. (168.76,85.4) .. controls (169.05,88.58) and (169.8,91.63) .. (170.93,94.49) ; \draw [shift={(172.61,98.06)}, rotate = 251.7] [fill={rgb, 255:red, 223; green, 83; blue, 107 }  ,fill opacity=1 ][line width=0.08]  [draw opacity=0] (13.4,-6.43) -- (0,0) -- (13.4,6.44) -- (8.9,0) -- cycle    ; 
\draw  [draw opacity=0][line width=1.5]  (121.22,97.89) .. controls (124.43,92.54) and (126.16,86.42) .. (125.94,79.98) .. controls (125.77,75.29) and (124.59,70.85) .. (122.58,66.82) -- (85.02,81.41) -- cycle ; \draw [color={rgb, 255:red, 223; green, 83; blue, 107 }  ,draw opacity=1 ][line width=1.5]    (121.22,97.89) .. controls (124.43,92.54) and (126.16,86.42) .. (125.94,79.98) .. controls (125.82,76.68) and (125.2,73.5) .. (124.14,70.49) ; \draw [shift={(122.58,66.82)}, rotate = 73.95] [fill={rgb, 255:red, 223; green, 83; blue, 107 }  ,fill opacity=1 ][line width=0.08]  [draw opacity=0] (13.4,-6.43) -- (0,0) -- (13.4,6.44) -- (8.9,0) -- cycle    ; 

\draw (179.72,73.31) node [anchor=north west][inner sep=0.75pt]    {$e$};
\draw (98.15,72) node [anchor=north west][inner sep=0.75pt]    {$e$};

\end{tikzpicture} &
 \tikzset{every picture/.style={line width=0.75pt}} 

\begin{tikzpicture}[x=0.5pt,y=0.5pt,yscale=-1,xscale=1]

\draw  [draw opacity=0][line width=1.5]  (87.13,46.59) .. controls (87.2,46.59) and (87.27,46.59) .. (87.34,46.59) .. controls (108.57,46.59) and (125.77,62.26) .. (125.77,81.59) .. controls (125.77,100.88) and (108.64,116.53) .. (87.47,116.59) -- (87.34,81.59) -- cycle ; \draw  [color={rgb, 255:red, 0; green, 0; blue, 0 }  ,draw opacity=1 ][line width=1.5]  (87.13,46.59) .. controls (87.2,46.59) and (87.27,46.59) .. (87.34,46.59) .. controls (108.57,46.59) and (125.77,62.26) .. (125.77,81.59) .. controls (125.77,100.88) and (108.64,116.53) .. (87.47,116.59) ;  
\draw  [draw opacity=0][line width=1.5]  (187.14,116.54) .. controls (165.93,116.52) and (148.75,100.86) .. (148.75,81.54) .. controls (148.75,62.23) and (165.93,46.57) .. (187.14,46.54) -- (187.19,81.54) -- cycle ; \draw  [color={rgb, 255:red, 0; green, 0; blue, 0 }  ,draw opacity=1 ][line width=1.5]  (187.14,116.54) .. controls (165.93,116.52) and (148.75,100.86) .. (148.75,81.54) .. controls (148.75,62.23) and (165.93,46.57) .. (187.14,46.54) ;

\end{tikzpicture} &
 \tikzset{every picture/.style={line width=0.75pt}} 

\begin{tikzpicture}[x=0.5pt,y=0.5pt,yscale=-1,xscale=1]

\draw  [draw opacity=0][line width=1.5]  (151.6,79.5) .. controls (151.55,79.5) and (151.5,79.5) .. (151.45,79.5) .. controls (130.22,79.5) and (113.01,95.17) .. (113.01,114.5) .. controls (113.01,133.83) and (130.22,149.5) .. (151.45,149.5) .. controls (151.5,149.5) and (151.55,149.5) .. (151.6,149.5) -- (151.45,114.5) -- cycle ; \draw  [draw opacity=0][line width=1.5]  (151.6,79.5) .. controls (151.55,79.5) and (151.5,79.5) .. (151.45,79.5) .. controls (130.22,79.5) and (113.01,95.17) .. (113.01,114.5) .. controls (113.01,133.83) and (130.22,149.5) .. (151.45,149.5) .. controls (151.5,149.5) and (151.55,149.5) .. (151.6,149.5) ;  
\draw  [draw opacity=0][line width=1.5]  (81.3,79.5) .. controls (81.35,79.5) and (81.4,79.5) .. (81.45,79.5) .. controls (102.68,79.5) and (119.89,95.17) .. (119.89,114.5) .. controls (119.89,133.83) and (102.68,149.5) .. (81.45,149.5) .. controls (81.4,149.5) and (81.35,149.5) .. (81.3,149.5) -- (81.45,114.5) -- cycle ; \draw  [draw opacity=0][line width=1.5]  (81.3,79.5) .. controls (81.35,79.5) and (81.4,79.5) .. (81.45,79.5) .. controls (102.68,79.5) and (119.89,95.17) .. (119.89,114.5) .. controls (119.89,133.83) and (102.68,149.5) .. (81.45,149.5) .. controls (81.4,149.5) and (81.35,149.5) .. (81.3,149.5) ;  
\draw  [draw opacity=0][line width=1.5]  (80.3,79.52) .. controls (80.68,79.51) and (81.06,79.5) .. (81.45,79.5) .. controls (97.3,79.5) and (110.91,88.24) .. (116.79,100.71) -- (81.45,114.5) -- cycle ; \draw  [line width=1.5]  (80.3,79.52) .. controls (80.68,79.51) and (81.06,79.5) .. (81.45,79.5) .. controls (97.3,79.5) and (110.91,88.24) .. (116.79,100.71) ;  
\draw  [draw opacity=0][line width=1.5]  (116.81,128.25) .. controls (110.94,140.75) and (97.32,149.5) .. (81.45,149.5) .. controls (80.99,149.5) and (80.54,149.5) .. (80.09,149.48) -- (81.45,114.5) -- cycle ; \draw  [line width=1.5]  (116.81,128.25) .. controls (110.94,140.75) and (97.32,149.5) .. (81.45,149.5) .. controls (80.99,149.5) and (80.54,149.5) .. (80.09,149.48) ;  
\draw  [draw opacity=0][line width=1.5]  (152.6,79.52) .. controls (152.21,79.51) and (151.83,79.5) .. (151.45,79.5) .. controls (135.6,79.5) and (121.99,88.24) .. (116.11,100.71) -- (151.45,114.5) -- cycle ; \draw  [line width=1.5]  (152.6,79.52) .. controls (152.21,79.51) and (151.83,79.5) .. (151.45,79.5) .. controls (135.6,79.5) and (121.99,88.24) .. (116.11,100.71) ;  
\draw  [draw opacity=0][line width=1.5]  (116.49,129.08) .. controls (122.57,141.13) and (135.93,149.5) .. (151.45,149.5) .. controls (151.9,149.5) and (152.35,149.5) .. (152.8,149.48) -- (151.45,114.5) -- cycle ; \draw  [line width=1.5]  (116.49,129.08) .. controls (122.57,141.13) and (135.93,149.5) .. (151.45,149.5) .. controls (151.9,149.5) and (152.35,149.5) .. (152.8,149.48) ;

\end{tikzpicture} &
 \tikzset{every picture/.style={line width=0.75pt}} 

\begin{tikzpicture}[x=0.5pt,y=0.5pt,yscale=-1,xscale=1]

\draw  [draw opacity=0][line width=1.5]  (67.3,66.5) .. controls (67.35,66.5) and (67.4,66.5) .. (67.45,66.5) .. controls (88.68,66.5) and (105.89,82.17) .. (105.89,101.5) .. controls (105.89,120.83) and (88.68,136.5) .. (67.45,136.5) .. controls (67.4,136.5) and (67.35,136.5) .. (67.3,136.5) -- (67.45,101.5) -- cycle ; \draw  [draw opacity=0][line width=1.5]  (67.3,66.5) .. controls (67.35,66.5) and (67.4,66.5) .. (67.45,66.5) .. controls (88.68,66.5) and (105.89,82.17) .. (105.89,101.5) .. controls (105.89,120.83) and (88.68,136.5) .. (67.45,136.5) .. controls (67.4,136.5) and (67.35,136.5) .. (67.3,136.5) ;  
\draw  [draw opacity=0][line width=1.5]  (164.6,65.5) .. controls (164.55,65.5) and (164.5,65.5) .. (164.45,65.5) .. controls (143.22,65.5) and (126.01,81.17) .. (126.01,100.5) .. controls (126.01,119.83) and (143.22,135.5) .. (164.45,135.5) .. controls (164.5,135.5) and (164.55,135.5) .. (164.6,135.5) -- (164.45,100.5) -- cycle ; \draw  [draw opacity=0][line width=1.5]  (164.6,65.5) .. controls (164.55,65.5) and (164.5,65.5) .. (164.45,65.5) .. controls (143.22,65.5) and (126.01,81.17) .. (126.01,100.5) .. controls (126.01,119.83) and (143.22,135.5) .. (164.45,135.5) .. controls (164.5,135.5) and (164.55,135.5) .. (164.6,135.5) ;  
\draw [line width=1.5]    (118.6,104.05) -- (133.99,115.83) ;
\draw [line width=1.5]    (97.04,87.52) -- (104.85,93.8) ;
\draw  [draw opacity=0][line width=1.5]  (60.3,65.77) .. controls (60.68,65.76) and (61.06,65.75) .. (61.45,65.75) .. controls (77.92,65.75) and (91.97,75.18) .. (97.44,88.44) -- (61.45,100.75) -- cycle ; \draw  [line width=1.5]  (60.3,65.77) .. controls (60.68,65.76) and (61.06,65.75) .. (61.45,65.75) .. controls (77.92,65.75) and (91.97,75.18) .. (97.44,88.44) ;  
\draw  [draw opacity=0][line width=1.5]  (97.72,112.35) .. controls (92.47,125.98) and (78.21,135.75) .. (61.45,135.75) .. controls (60.99,135.75) and (60.54,135.75) .. (60.09,135.73) -- (61.45,100.75) -- cycle ; \draw  [line width=1.5]  (97.72,112.35) .. controls (92.47,125.98) and (78.21,135.75) .. (61.45,135.75) .. controls (60.99,135.75) and (60.54,135.75) .. (60.09,135.73) ;  
\draw  [draw opacity=0][line width=1.5]  (169.85,66.27) .. controls (169.46,66.26) and (169.08,66.25) .. (168.7,66.25) .. controls (152.24,66.25) and (138.19,75.67) .. (132.72,88.92) -- (168.7,101.25) -- cycle ; \draw  [line width=1.5]  (169.85,66.27) .. controls (169.46,66.26) and (169.08,66.25) .. (168.7,66.25) .. controls (152.24,66.25) and (138.19,75.67) .. (132.72,88.92) ;  
\draw  [draw opacity=0][line width=1.5]  (133.24,114.79) .. controls (139.05,127.4) and (152.74,136.25) .. (168.7,136.25) .. controls (169.15,136.25) and (169.6,136.25) .. (170.05,136.23) -- (168.7,101.25) -- cycle ; \draw  [line width=1.5]  (133.24,114.79) .. controls (139.05,127.4) and (152.74,136.25) .. (168.7,136.25) .. controls (169.15,136.25) and (169.6,136.25) .. (170.05,136.23) ;  
\draw [line width=1.5]    (97.21,113.61) -- (133.11,88) ;

\end{tikzpicture} \\
 $\ar{G}$& $\ar{G}\ba e$ & $\ar{G} \con e$ & $\ar{G}\pcon e$ 
\end{tabular}
\caption{Operations on an edge $e$ of an arrow presentation $\ar{G}$.}
\label{t.apops}
\end{table}

We recall the operations of  deletion, contraction and Penrose-contraction. These  are illustrated in Table~\ref{t.apops}. 
Let $\ar{G}$ be an arrow presentation with edge $e$. Let $\overrightarrow{p_1p_2}$ and $\overrightarrow{q_1q_2}$ be the two arrows constituting the edge $e$. 
Then $\ar{G}$ \emph{delete} $e$, denoted $\ar{G}\ba e$, is the arrow presentation obtained by  removing the pair of arrows labelled $e$ (but not the arcs of the vertices they lie on) from $\ar{G}$, and removing $e$ from the edge set.
Next, $\ar{G}$ \emph{contract} $e$, denoted $\ar{G}\con e$,  is the 
arrow presentation obtained by deleting the arrows $\overrightarrow{p_1p_2}$ and $\overrightarrow{q_1q_2}$ together with the interiors of the vertex arcs they lie on. Then identifying  $p_1$ and $q_2$, and identifying  $p_2$ and $q_1$. Then, finally, removing $e$ from the edge set. 
Similarly, $\ar{G}$ \emph{Penrose-contract} $e$, denoted $\ar{G}\pcon e$, is the 
arrow presentation obtained by deleting the arrows $\overrightarrow{p_1p_2}$ and $\overrightarrow{q_1q_2}$ together with the interiors of the vertex arcs they lie on. Then identifying  $p_1$ and $q_1$, and identifying  $p_2$ and $q_2$. Then, finally, removing $e$ from the edge set. 
As an example, Figures~\ref{rgarrp.5}--\ref{rgarrp.7} show the results of applying these operations to the edge $e$ of the arrow presentation shown in Figure~\ref{Arrp}. 

Note that the operations of deletion, contraction and Penrose-contraction all commute with each other and themselves when applied to distinct edges. (This follows immediately from the observation  that the operations act locally at edges.) They also correspond to the usual ribbon graph operations of the same name.

\subsection{2-sums and tensor products for arrow presentations} \label{ss:tenp}
Next we consider 2-sums and tensor products.
Although somewhat awkward in terms of ribbon graphs, these  operations are straightforward to work with when expressed in the language of arrow presentations. This is the main reason why we work with arrow presentations here rather than ribbon graphs.

Let $f$ be an edge of an arrow presentation $\ar{G}$, and $e$  be an edge of an arrow presentation $\ar{H}$.
A \emph{coupling} of $f$ and $e$ is a bijection $\varphi$ from the pair of arrows in $\ar{G}$ that constitute the edge $f$ to the pair of arrows in $\ar{H}$ that constitute the edge $e$.

The reader may find it helpful to consult Figure~\ref{2sumDfn} when reading the following definition.

\begin{definition}\label{def:ar2sum}
Let $\ar{G}$ and $\ar{H}$ be arrow presentations,  $f$ be an edge of $\ar{G}$, $e$ be an edge of  $\ar{H}$ and $\varphi$ be a coupling of $f$ and $e$. 
Further suppose $\varphi$ sends the arrow $\overrightarrow{p_1p_2}$ to the arrow $\overrightarrow{q_1q_2}$, and  sends the arrow $\overrightarrow{p_3p_4}$ to  the arrow $\overrightarrow{q_3q_4}$.
Then the \emph{2-sum} of $\ar{G}$ and $\ar{H}$ with respect to $\varphi$, denoted $\ar{G}\oplus_{\varphi} \ar{H}$ is the arrow presentation obtained by deleting the four arrows together  with the  the interiors of the arcs of the vertices that they lie on, then, for $i=1,\dots ,4$, identifying $p_i$ and $q_i$. 
\end{definition}

\begin{figure}
    \centering
    \input{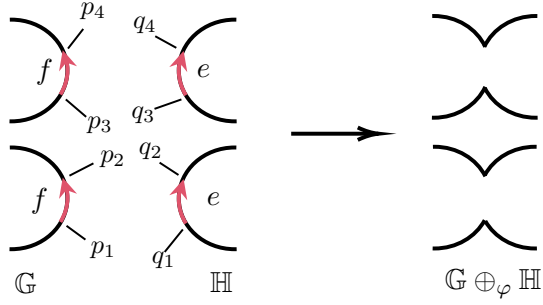}
    \caption{The 2-sum of $\ar{G}$ and $\ar{H}$  with respect to the coupling $\varphi$. }
    \label{2sumDfn}

\end{figure}

As an example, Figure~\ref{exarrp2sum} shows a 2-sum of a one-vertex arrow presentation with a three-vertex arrow presentation resulting in a two-vertex arrow presentation. Because of symmetry forming the 2-sum with respect to the other possible coupling of $f$ and $e$ would result in an equivalent arrow presentation, but in general the 2-sum does depend upon $\varphi$. 

Note that $|V(\ar{G}\oplus_{\varphi} \ar{H})|$ equals either $|V(\ar{G})|+|V(\ar{H})|-2$ or $|V(\ar{G})|+|V(\ar{H})|$. The latter is possible when each of $f$ and $e$ is incident to only one vertex (i.e., when both edges are loops).

\begin{figure}
    \centering
    \begin{tabular}{ccc}
      \input{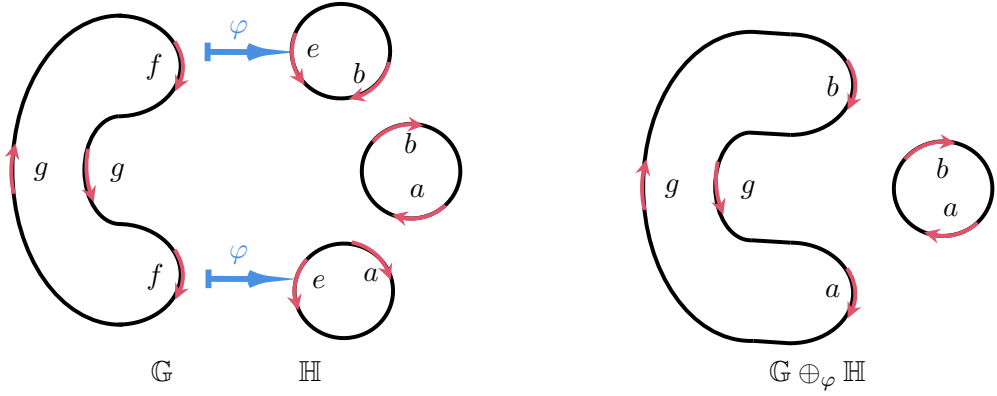} && \tikzset{every picture/.style={line width=0.75pt}} 

\begin{tikzpicture}[x=0.45pt,y=0.45pt,yscale=-1,xscale=1]

\draw  [draw opacity=0][line width=1.5]  (147.38,155.76) .. controls (176.38,155.73) and (199.88,136.81) .. (199.88,113.47) .. controls (199.88,90.12) and (176.38,71.2) .. (147.38,71.17) -- (147.31,113.47) -- cycle ; \draw  [color={rgb, 255:red, 0; green, 0; blue, 0 }  ,draw opacity=1 ][line width=1.5]  (147.38,155.76) .. controls (176.38,155.73) and (199.88,136.81) .. (199.88,113.47) .. controls (199.88,90.12) and (176.38,71.2) .. (147.38,71.17) ;  
\draw  [draw opacity=0][line width=1.5]  (194.74,92.87) .. controls (199.91,100.25) and (202.49,108.85) .. (201.57,117.82) .. controls (200.94,123.98) and (198.72,129.7) .. (195.25,134.77) -- (147.31,113.47) -- cycle ; \draw [color={rgb, 255:red, 223; green, 83; blue, 107 }  ,draw opacity=1 ][line width=1.5]    (194.74,92.87) .. controls (199.91,100.25) and (202.49,108.85) .. (201.57,117.82) .. controls (201.07,122.65) and (199.6,127.22) .. (197.31,131.41) ; \draw [shift={(195.25,134.77)}, rotate = 295.45] [fill={rgb, 255:red, 223; green, 83; blue, 107 }  ,fill opacity=1 ][line width=0.08]  [draw opacity=0] (13.4,-6.43) -- (0,0) -- (13.4,6.44) -- (8.9,0) -- cycle    ; 
\draw  [draw opacity=0][line width=1.5]  (147.38,330.98) .. controls (176.38,330.95) and (199.88,312.03) .. (199.88,288.69) .. controls (199.88,265.35) and (176.38,246.42) .. (147.38,246.39) -- (147.31,288.69) -- cycle ; \draw  [color={rgb, 255:red, 0; green, 0; blue, 0 }  ,draw opacity=1 ][line width=1.5]  (147.38,330.98) .. controls (176.38,330.95) and (199.88,312.03) .. (199.88,288.69) .. controls (199.88,265.35) and (176.38,246.42) .. (147.38,246.39) ;  
\draw  [draw opacity=0][line width=1.5]  (194.74,268.09) .. controls (199.91,275.47) and (202.49,284.07) .. (201.57,293.04) .. controls (200.94,299.2) and (198.72,304.93) .. (195.25,309.99) -- (147.31,288.69) -- cycle ; \draw [color={rgb, 255:red, 223; green, 83; blue, 107 }  ,draw opacity=1 ][line width=1.5]    (194.74,268.09) .. controls (199.91,275.47) and (202.49,284.07) .. (201.57,293.04) .. controls (201.07,297.88) and (199.6,302.44) .. (197.31,306.64) ; \draw [shift={(195.25,309.99)}, rotate = 295.45] [fill={rgb, 255:red, 223; green, 83; blue, 107 }  ,fill opacity=1 ][line width=0.08]  [draw opacity=0] (13.4,-6.43) -- (0,0) -- (13.4,6.44) -- (8.9,0) -- cycle    ; 
\draw  [draw opacity=0][line width=1.5]  (115.75,69.07) .. controls (115.15,69.06) and (114.55,69.05) .. (113.95,69.05) .. controls (63.96,69.05) and (23.43,127.26) .. (23.43,199.06) .. controls (23.43,270.86) and (63.96,329.07) .. (113.95,329.07) .. controls (114.71,329.07) and (115.47,329.05) .. (116.22,329.03) -- (113.95,199.06) -- cycle ; \draw  [line width=1.5]  (115.75,69.07) .. controls (115.15,69.06) and (114.55,69.05) .. (113.95,69.05) .. controls (63.96,69.05) and (23.43,127.26) .. (23.43,199.06) .. controls (23.43,270.86) and (63.96,329.07) .. (113.95,329.07) .. controls (114.71,329.07) and (115.47,329.05) .. (116.22,329.03) ;  
\draw  [draw opacity=0][line width=1.5]  (112.38,153.76) .. controls (96.54,155) and (83.95,174.81) .. (83.95,199.06) .. controls (83.95,224.11) and (97.38,244.42) .. (113.95,244.42) .. controls (114.22,244.42) and (114.48,244.41) .. (114.74,244.4) -- (113.95,199.06) -- cycle ; \draw  [line width=1.5]  (112.38,153.76) .. controls (96.54,155) and (83.95,174.81) .. (83.95,199.06) .. controls (83.95,224.11) and (97.38,244.42) .. (113.95,244.42) .. controls (114.22,244.42) and (114.48,244.41) .. (114.74,244.4) ;  
\draw  [draw opacity=0][line width=1.5]  (24.64,218.97) .. controls (22.69,211.03) and (21.98,202.39) .. (22.76,193.42) .. controls (23.27,187.49) and (24.4,181.78) .. (26.06,176.4) -- (73.76,196.68) -- cycle ; \draw [color={rgb, 255:red, 223; green, 83; blue, 107 }  ,draw opacity=1 ][line width=1.5]    (24.64,218.97) .. controls (22.69,211.03) and (21.98,202.39) .. (22.76,193.42) .. controls (23.15,188.88) and (23.9,184.48) .. (24.98,180.25) ; \draw [shift={(26.06,176.4)}, rotate = 102.98] [fill={rgb, 255:red, 223; green, 83; blue, 107 }  ,fill opacity=1 ][line width=0.08]  [draw opacity=0] (13.4,-6.43) -- (0,0) -- (13.4,6.44) -- (8.9,0) -- cycle    ; 
\draw  [draw opacity=0][line width=1.5]  (86.1,178.2) .. controls (84.33,186.68) and (84.08,195.86) .. (85.62,205.28) .. controls (86.65,211.53) and (88.39,217.46) .. (90.74,222.96) -- (139.52,197.68) -- cycle ; \draw [color={rgb, 255:red, 223; green, 83; blue, 107 }  ,draw opacity=1 ][line width=1.5]    (86.1,178.2) .. controls (84.33,186.68) and (84.08,195.86) .. (85.62,205.28) .. controls (86.42,210.15) and (87.66,214.83) .. (89.28,219.27) ; \draw [shift={(90.74,222.96)}, rotate = 251.33] [fill={rgb, 255:red, 223; green, 83; blue, 107 }  ,fill opacity=1 ][line width=0.08]  [draw opacity=0] (13.4,-6.43) -- (0,0) -- (13.4,6.44) -- (8.9,0) -- cycle    ; 
\draw  [line width=1.5]  (265.14,162.5) .. controls (287.27,156.99) and (309.87,169.75) .. (315.61,191) .. controls (321.35,212.24) and (308.06,233.93) .. (285.93,239.44) .. controls (263.79,244.95) and (241.2,232.2) .. (235.46,210.95) .. controls (229.71,189.7) and (243,168.01) .. (265.14,162.5) -- cycle ;
\draw  [draw opacity=0][line width=1.5]  (259.39,237.66) .. controls (267.5,240.97) and (276.77,241.83) .. (285.97,239.55) .. controls (293.16,237.76) and (299.42,234.26) .. (304.4,229.62) -- (275.57,201.08) -- cycle ; \draw [color={rgb, 255:red, 223; green, 83; blue, 107 }  ,draw opacity=1 ][line width=1.5]    (263.24,239.02) .. controls (270.37,241.16) and (278.2,241.48) .. (285.97,239.55) .. controls (293.16,237.76) and (299.42,234.26) .. (304.4,229.62) ;  \draw [shift={(259.39,237.66)}, rotate = 14.06] [fill={rgb, 255:red, 223; green, 83; blue, 107 }  ,fill opacity=1 ][line width=0.08]  [draw opacity=0] (13.4,-6.43) -- (0,0) -- (13.4,6.44) -- (8.9,0) -- cycle    ;
\draw  [draw opacity=0][line width=1.5]  (286.07,162.6) .. controls (277.89,160.62) and (268.97,161.02) .. (260.49,164.29) .. controls (253.59,166.95) and (247.82,171.18) .. (243.46,176.37) -- (275.57,201.08) -- cycle ; \draw [color={rgb, 255:red, 223; green, 83; blue, 107 }  ,draw opacity=1 ][line width=1.5]    (282.09,161.84) .. controls (275.05,160.82) and (267.62,161.55) .. (260.49,164.29) .. controls (253.59,166.95) and (247.82,171.18) .. (243.46,176.37) ;  \draw [shift={(286.07,162.6)}, rotate = 185.46] [fill={rgb, 255:red, 223; green, 83; blue, 107 }  ,fill opacity=1 ][line width=0.08]  [draw opacity=0] (13.4,-6.43) -- (0,0) -- (13.4,6.44) -- (8.9,0) -- cycle    ;
\draw [line width=1.5]    (115.75,69.07) -- (147.38,71.17) ;
\draw [line width=1.5]    (112.38,153.76) -- (147.38,155.76) ;
\draw [line width=1.5]    (114.74,244.4) -- (147.38,246.39) ;
\draw [line width=1.5]    (116.22,329.03) -- (147.86,331.12) ;

\draw (103,191.4) node [anchor=north west][inner sep=0.75pt]    {$g$};
\draw (39,190.4) node [anchor=north west][inner sep=0.75pt]    {$g$};
\draw (273,211.4) node [anchor=north west][inner sep=0.75pt]    {$a$};
\draw (174,279.4) node [anchor=north west][inner sep=0.75pt]    {$a$};
\draw (175,103.4) node [anchor=north west][inner sep=0.75pt]    {$b$};
\draw (267.14,172.9) node [anchor=north west][inner sep=0.75pt]    {$b$};

\end{tikzpicture} \\
     $\ar{G}$ \qquad \qquad $\ar{H}$ &  ~\hspace{1cm}~&$\ar{G}\oplus_{\varphi} \ar{H}$
    \end{tabular}
    \caption{Two arrow presentations, $\ar{G}$ and $\ar{H}$, with a coupling $\varphi$, and the 2-sum $\ar{G}\oplus_{\varphi} \ar{H}$.}
    \label{exarrp2sum}
\end{figure}

The next two results are  obvious, following immediately upon drawing how the operations change an arrow presentation. 
\begin{lemma}\label{lem:sumord}
Let $\ar{G}$ be an arrow presentations  with an edge $f$,  let $\ar{H}$ be an arrow presentation with distinct edges $e$ and $g$, and let $\ar{K}$ an arrow presentation with an edge $h$. 
In addition let $\varphi_{f,e}$ be a coupling of $f$ and $e$, and $\varphi_{g,h}$ be a coupling of $g$ and $h$.

Then  \[
\ar{G} \oplus_{\varphi_{f,e}} \ar{H} = \ar{H} \oplus_{\varphi^{-1}_{f,e}} \ar{G}
\qquad \text{and} \qquad 
\ar{G} \oplus_{\varphi_{f,e}} (\ar{H} \oplus_{\varphi_{g,h}} \ar{K})  = (\ar{G} \oplus_{\varphi_{f,e}} \ar{H} )\oplus_{\varphi_{g,h}} \ar{K} .\] 
\end{lemma}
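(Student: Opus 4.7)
The plan is to verify both equalities directly by unwinding Definition~\ref{def:ar2sum} and observing that the 2-sum is a purely local construction at the edges being coupled. Throughout, I would emphasize that forming the 2-sum $\ar{G}\oplus_{\varphi}\ar{H}$ amounts to two operations that are each intrinsically symmetric: deleting the four arrows (together with the interiors of the four vertex arcs they occupy), and then performing four endpoint identifications $p_i\sim q_i$.

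For the first equality, I would note that the coupling $\varphi$ and its inverse $\varphi^{-1}$ pair up exactly the same arrows of $\ar{G}$ with the same arrows of $\ar{H}$; the definition of 2-sum only uses $\varphi$ to say which four endpoints get glued to which, and the gluing relation $p_i\sim q_i$ is manifestly symmetric between $\ar{G}$ and $\ar{H}$. Hence swapping the roles of $\ar{G}$ and $\ar{H}$ and replacing $\varphi$ by $\varphi^{-1}$ produces the same arrow presentation. This is essentially a one-line observation from the definition.

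For the second (associativity-type) equality, the key point is that $e$ and $g$ are \emph{distinct} edges of $\ar{H}$. Therefore the four arrows constituting $\{f,e\}$ and the four arrows constituting $\{g,h\}$ are disjoint, and so are the interiors of the vertex arcs they occupy; no identification made by $\varphi_{f,e}$ involves any point touched by $\varphi_{g,h}$, and vice versa. Consequently the two sets of deletions and the two sets of endpoint identifications act on disjoint pieces of the disjoint union $\ar{G}\sqcup\ar{H}\sqcup\ar{K}$, and can be performed in either order to produce the same resulting arrow presentation.

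I do not expect any serious obstacle. The only minor care needed is to handle the (permitted) case where $f$, $e$, $g$, or $h$ is a loop, so that two of the $p_i$ or $q_i$ may coincide on a single vertex circle; since this does not change which arcs are being deleted or which endpoints are being identified, the local/disjoint-support argument still goes through verbatim, and both identities follow. A brief reference to Figure~\ref{2sumDfn} to make the locality visible would complete the argument.
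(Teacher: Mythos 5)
Your proposal is correct and matches the paper's treatment: the paper simply declares both identities obvious from drawing how the operations act locally, and your unwinding of Definition~\ref{def:ar2sum} (symmetry of the gluing $p_i\sim q_i$ for the first identity, disjoint support of the two couplings for the second) is exactly the justification intended. No gaps.
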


\begin{lemma}\label{lem:sumop}
   Let $\ar{G}$ and $\ar{H}$ be arrow presentations, $f$ be an edge of $\ar{G}$, and $e$ and $g$ be distinct edges of $\ar{H}$. Additionally let $\varphi$ be a coupling of $f$ and $e$. Then
   \begin{enumerate}
       \item $(\ar{G}\oplus_{\varphi}\ar{H})\ba g = \ar{G}\oplus_{\varphi}(\ar{H}\ba g) $,
       \item $(\ar{G}\oplus_{\varphi}\ar{H})\con g = \ar{G}\oplus_{\varphi}(\ar{H}\con g) $, and
       \item  $(\ar{G}\oplus_{\varphi}\ar{H})\pcon g = \ar{G}\oplus_{\varphi}(\ar{H}\pcon g) $.
   \end{enumerate}

\end{lemma}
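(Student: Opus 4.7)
The plan is to prove all three equalities by the same locality argument: both the 2-sum and the single-edge operations act only in small neighborhoods of the arrows constituting the relevant edges, and under the hypothesis $e\neq g$ these neighborhoods are disjoint, so the operations commute.

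First I would unpack the definitions to record precisely what each operation changes. The 2-sum $\ar{G}\oplus_{\varphi}\ar{H}$ alters $\ar{G}$ and $\ar{H}$ only in neighborhoods of the arrows constituting $f$ and $e$: it deletes these four arrows and the interiors of the four arcs of the vertex circles that they lie on, and then identifies the eight endpoints in pairs $p_i\sim q_i$ for $i=1,\dots,4$. Symmetrically, each of $\ar{H}\ba g$, $\ar{H}\con g$, and $\ar{H}\pcon g$ only alters $\ar{H}$ in a neighborhood of the two arrows constituting $g$ (deleting them, and, in the last two cases, identifying their endpoints in the two possible ways).

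Next I would invoke the hypotheses. Since $g\neq e$, the two arrows constituting $g$ are disjoint from the two arrows constituting $e$ in $\ar{H}$; and since $f$ lies in $\ar{G}$, a distinct arrow presentation, the arrows of $g$ are trivially disjoint from those of $f$. In particular the eight endpoints $p_i,q_i$ used by $\oplus_\varphi$ are disjoint from the four endpoints of the $g$-arrows. Thus the set-theoretic data modified by $\oplus_\varphi$ and by the single-edge operation on $g$ are disjoint, and one can check directly that applying them in either order produces the same circles, arrows, labels, and identifications; this gives the three equalities.

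The main (and really only) potential subtlety lies in parts (2) and (3), where one must confirm that the point identifications introduced by $\con g$ or $\pcon g$ do not interfere with the identifications $p_i\sim q_i$ introduced by $\oplus_\varphi$. This is immediate from the disjointness of endpoints noted above, and part (1) is simpler still since deletion involves no identifications. Because the argument is so transparent once the operations are drawn, I expect the cleanest written proof is simply to remark that all operations involved are local at their specified edges, invoke $e\neq g$, and refer the reader to a picture, mirroring the authors' observation that the result is obvious.
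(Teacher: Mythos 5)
Your proposal is correct and matches the paper's treatment: the authors simply remark that the result is ``obvious, following immediately upon drawing how the operations change an arrow presentation,'' which is precisely the locality-of-operations argument you spell out. Your fleshed-out version, noting the disjointness of the arrows of $g$ from those of $e$ and $f$ and hence of the modified data, is a faithful (and slightly more explicit) rendering of the same reasoning.
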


We now make use of Lemma~\ref{lem:sumord} to define tensor products. The idea is to form a 2-sum at each edge $f$ of $\ar{G}$ with some arrow presentation $\ar{H}^{(f)}$.  
\begin{definition}\label{d.argenten}
Let $\ar{G}$ be an arrow presentation and $\{\ar{H}^{(f)}\}_{f\in E(\ar{G})}$ be a family of arrow presentations indexed by the edges of $\ar{G}$. All the arrow presentations here are  distinct.
Further suppose that for each edge $f$ of $\ar{G}$ there is a coupling $\varphi_f$ of $f$ with an edge $e^{(f)}$ of $\ar{H}^{(f)}$. Let $\bphi=\{ \varphi_{f} \}_{f\in E(\ar{G})}$ denote the indexed set of such couplings. 
Then the \emph{tensor product} $\ar{G}\ot_{\bphi} \{\ar{H}^{(f)}\}_{f\in E(\ar{G})}$ is the arrow presentation obtained by starting with $\ar{G}$ and then for each edge $f$ in $\ar{G}$ taking the 2-sum with $\ar{H}^{(f)}$ with respect to $\varphi_f$:
\[  \ar{G}\ot_{\bphi} \{\ar{H}^{(f)}\}_{f\in E(\ar{G})} =  \ar{G} \bigoplus_{\substack{\varphi_{f} \\f\in E(\ar{G})}} \ar{H}^{(f)} .  \]
\end{definition}

The following special case of a tensor product which repeatedly 2-sums a copy of the same arrow presentation is of interest. This is the analogue of the tensor product appearing in Brylawski's tensor product formula~\eqref{btf}.
\begin{definition}
Let $\ar{G}$ be and $\ar{H}$ be arrow presentations. Let $e$ be a fixed edge of $\ar{H}$ and for each edge $f$ of $\ar{G}$ let $\varphi_f$ be a coupling of $f$ and $e$, and $\bphi = \{\varphi_f\}_{f\in E(\ar{G})}$. Then 
\[  \ar{G}\ot_{\bphi} \ar{H} =  \ar{G} \ot_{\boldsymbol{\psi}} \{\ar{H}^{(f)}\}_{f\in E(\ar{G})},  \]
where each $\ar{H}^{(f)}$ is an arrow presentation equivalent to $\ar{H}$ (with all copies distinct); $e^{(f)}$ is the edge in $\ar{H}^{(f)}$ corresponding to $e$;  $\psi_f$ is the coupling of $f$ and $e^{(f)}$ induced from $\varphi_f$ under the equivalence; and  $\boldsymbol{\psi} =\{\psi_f\}_{f\in E(\ar{G})}$.
\end{definition}

\subsection{Packaged arrow presentations}\label{ss.dap}

Let $\ar{G}$ be an arrow presentation. A \emph{vertex partition} of $\ar{G}$ is a partition $\V$ of its vertex set $V(\ar{G})$. We shall call the blocks of the partition \emph{vertex classes} and use $[v]_{\V}$ to denote the vertex class containing $v$. 
Similarly, a \emph{boundary partition} $\B$ of $\ar{G}$ is a partition of its  set of boundary components $B(\ar{G})$. We shall call the blocks of this partition \emph{boundary classes} and use $[b]_{\B}$ to denote the boundary class containing $b$. 
At times, when there is no potential for confusion, we omit  subscripts and write $[v]$ for $[v]_{\V}$, and $[b]$ for $[b]_{\B}$.

\begin{definition}
A \emph{packaged arrow presentation} $\pack{G}$ is a triple $(\ar{G}, \V,\B)$ where $\ar{G}$ is an arrow presentation, $\V$ is a vertex partition of $\ar{G}$, and  $\B$ is a boundary partition of $\ar{G}$.
\end{definition}

Terms (and the corresponding notation) such as edges, vertices, boundary components, couplings, etc. of a packaged arrow presentation refer to the edges, vertices, boundary components, couplings, etc., of its arrow presentation.

\begin{convention}\label{figconvention}
When depicting packaged arrow presentations in figures we shall draw the vertices (i.e., circles) of the arrow presentation using solid lines, and the boundary components as dashed lines sitting a little away from the arrow presentation. (Which side of the arrow presentation it is drawn on has no meaning.) We shall indicate which vertex or boundary class a curve belongs with annotations $[u]_{\V}$, $[b]_{\B}$, etc. Unless otherwise indicated, it is possible that the classes with different labels,  $[u]_{\V}$, $[v]_{\V}$ etc., are equal.
\end{convention}
As an illustration of these conventions, Figure~\ref{expackdelcon} shows some packaged arrow presentations. Additionally, Figure~\ref{fig:kgrph} shows all the packaged arrow presentations on one edge that have no isolated vertices. In that figure $[u]_{\V}\neq [v]_{\V}$ and $[a]_{\B}\neq [b]_{\B}$.

\begin{remark}\label{r.papcrg}
As noted above, arrow presentations correspond to ribbon graphs.  Packaged arrow presentations correspond to Huggett and Moffatt's coloured ribbon graphs~\cite{HM}. (The correspondence is the obvious one: vertex and boundary components in the same block of the partition are given the same colour.) They arose as a model for  graphs non-cellularly embedded in pseudo-surfaces (see \cite[Section~2]{HM}), and  the results in this paper may be expressed in that language too. We shall shortly give definitions of deletion and contraction for packaged arrow presentations. These correspond to the operations on coloured ribbon graphs from~\cite{HM}. 
\end{remark}

\medskip 

We consider five  operations acting on the edges of packaged arrow presentations: deletion, contraction, Penrose-contraction, merge-deletion and merge-contraction. A reader may find it helpful to refer to Table~\ref{tab:edgeops} when reading the  definitions. Additionally, examples of contraction and merge-deletion are given in Figure~\ref{expackdelcon}.

\begin{table}[t!]
\centering
\begin{tabular}{cccc}\vspace{5mm}
 \tikzset{every picture/.style={line width=0.75pt}} 

\begin{tikzpicture}[x=0.5pt,y=0.5pt,yscale=-1,xscale=1]

\draw  [draw opacity=0][line width=2.25]  (87.13,46.59) .. controls (87.2,46.59) and (87.27,46.59) .. (87.34,46.59) .. controls (108.57,46.59) and (125.77,62.26) .. (125.77,81.59) .. controls (125.77,100.88) and (108.64,116.53) .. (87.47,116.59) -- (87.34,81.59) -- cycle ; \draw  [color={rgb, 255:red, 0; green, 0; blue, 0 }  ,draw opacity=1 ][line width=2.25]  (87.13,46.59) .. controls (87.2,46.59) and (87.27,46.59) .. (87.34,46.59) .. controls (108.57,46.59) and (125.77,62.26) .. (125.77,81.59) .. controls (125.77,100.88) and (108.64,116.53) .. (87.47,116.59) ;  
\draw  [draw opacity=0][dash pattern={on 5.63pt off 4.5pt}][line width=1.5]  (128.34,63.97) .. controls (122.54,49.95) and (107.87,40) .. (90.69,40) .. controls (89.66,40) and (88.63,40.04) .. (87.62,40.11) -- (90.69,77.3) -- cycle ; \draw [color={rgb, 255:red, 0; green, 0; blue, 0 }  ,draw opacity=1 ][dash pattern={on 5.63pt off 4.5pt}][line width=1.5]  [dash pattern={on 5.63pt off 4.5pt}]  (128.34,63.97) .. controls (122.54,49.95) and (107.87,40) .. (90.69,40) .. controls (89.66,40) and (88.63,40.04) .. (87.62,40.11) ;  
\draw  [draw opacity=0][dash pattern={on 5.63pt off 4.5pt}][line width=1.5]  (88.55,124.43) .. controls (89.39,124.48) and (90.24,124.5) .. (91.09,124.5) .. controls (107.38,124.5) and (121.42,115.5) .. (127.77,102.55) -- (91.09,87.02) -- cycle ; \draw [color={rgb, 255:red, 0; green, 0; blue, 0 }  ,draw opacity=1 ][dash pattern={on 5.63pt off 4.5pt}][line width=1.5]  [dash pattern={on 5.63pt off 4.5pt}]  (88.55,124.43) .. controls (89.39,124.48) and (90.24,124.5) .. (91.09,124.5) .. controls (107.38,124.5) and (121.42,115.5) .. (127.77,102.55) ;  
\draw  [draw opacity=0][line width=2.25]  (207.14,116.54) .. controls (185.93,116.52) and (168.75,100.86) .. (168.75,81.54) .. controls (168.75,62.23) and (185.93,46.57) .. (207.14,46.54) -- (207.19,81.54) -- cycle ; \draw  [color={rgb, 255:red, 0; green, 0; blue, 0 }  ,draw opacity=1 ][line width=2.25]  (207.14,116.54) .. controls (185.93,116.52) and (168.75,100.86) .. (168.75,81.54) .. controls (168.75,62.23) and (185.93,46.57) .. (207.14,46.54) ;  
\draw  [draw opacity=0][dash pattern={on 5.63pt off 4.5pt}][line width=1.5]  (167.52,102.68) .. controls (174.08,115.31) and (187.81,124) .. (203.69,124) .. controls (204.75,124) and (205.8,123.96) .. (206.84,123.89) -- (203.69,85.87) -- cycle ; \draw [color={rgb, 255:red, 0; green, 0; blue, 0 }  ,draw opacity=1 ][dash pattern={on 5.63pt off 4.5pt}][line width=1.5]  [dash pattern={on 5.63pt off 4.5pt}]  (167.52,102.68) .. controls (174.08,115.31) and (187.81,124) .. (203.69,124) .. controls (204.75,124) and (205.8,123.96) .. (206.84,123.89) ;  
\draw  [draw opacity=0][dash pattern={on 5.63pt off 4.5pt}][line width=1.5]  (205.81,38.33) .. controls (204.96,38.28) and (204.09,38.25) .. (203.22,38.25) .. controls (185.46,38.25) and (170.39,49.18) .. (165.02,64.35) -- (203.22,76.58) -- cycle ; \draw [color={rgb, 255:red, 0; green, 0; blue, 0 }  ,draw opacity=1 ][dash pattern={on 5.63pt off 4.5pt}][line width=1.5]  [dash pattern={on 5.63pt off 4.5pt}]  (205.81,38.33) .. controls (204.96,38.28) and (204.09,38.25) .. (203.22,38.25) .. controls (185.46,38.25) and (170.39,49.18) .. (165.02,64.35) ;  
\draw [color={rgb, 255:red, 0; green, 0; blue, 0 }  ,draw opacity=1 ][line width=1.5]  [dash pattern={on 5.63pt off 4.5pt}]  (127.12,63.6) -- (166.01,63.61) ;
\draw  [draw opacity=0][line width=2.25]  (172.73,66.52) .. controls (169.64,72.21) and (168.15,78.68) .. (168.76,85.4) .. controls (169.18,89.95) and (170.52,94.22) .. (172.61,98.06) -- (208.44,81.79) -- cycle ; \draw [color={rgb, 255:red, 223; green, 83; blue, 107 }  ,draw opacity=1 ][line width=2.25]    (172.73,66.52) .. controls (169.64,72.21) and (168.15,78.68) .. (168.76,85.4) .. controls (169.02,88.24) and (169.64,90.97) .. (170.58,93.56) ; \draw [shift={(172.61,98.06)}, rotate = 251.7] [fill={rgb, 255:red, 223; green, 83; blue, 107 }  ,fill opacity=1 ][line width=0.08]  [draw opacity=0] (16.07,-7.72) -- (0,0) -- (16.07,7.72) -- (10.67,0) -- cycle    ; 
\draw  [draw opacity=0][line width=2.25]  (121.22,97.89) .. controls (124.43,92.54) and (126.16,86.42) .. (125.94,79.98) .. controls (125.77,75.29) and (124.59,70.85) .. (122.58,66.82) -- (85.02,81.41) -- cycle ; \draw [color={rgb, 255:red, 223; green, 83; blue, 107 }  ,draw opacity=1 ][line width=2.25]    (121.22,97.89) .. controls (124.43,92.54) and (126.16,86.42) .. (125.94,79.98) .. controls (125.83,77) and (125.32,74.13) .. (124.44,71.39) ; \draw [shift={(122.58,66.82)}, rotate = 73.95] [fill={rgb, 255:red, 223; green, 83; blue, 107 }  ,fill opacity=1 ][line width=0.08]  [draw opacity=0] (16.07,-7.72) -- (0,0) -- (16.07,7.72) -- (10.67,0) -- cycle    ; 
\draw    (69.25,108.35) -- (92.5,110.75) ;
\draw    (204,111) -- (227.5,106) ;
\draw    (146,108.5) -- (146,122.75) ;
\draw    (145.75,40.75) -- (145.75,55) ;
\draw [color={rgb, 255:red, 0; green, 0; blue, 0 }  ,draw opacity=1 ][line width=1.5]  [dash pattern={on 5.63pt off 4.5pt}]  (127.88,103.36) -- (168.95,103.4) ;

\draw (179.72,73.31) node [anchor=north west][inner sep=0.75pt]    {$e$};
\draw (98.15,72) node [anchor=north west][inner sep=0.75pt]    {$e$};
\draw (134.9,13.5) node [anchor=north west][inner sep=0.75pt]    {$[ a]\mathcal{_{B}}$};
\draw (134.9,126.85) node [anchor=north west][inner sep=0.75pt]    {$[ b]_{\mathcal{B}}$};
\draw (31.9,100.35) node [anchor=north west][inner sep=0.75pt]    {$[ u]_{\mathcal{V}}$};
\draw (231.4,100.6) node [anchor=north west][inner sep=0.75pt]    {$[ v]_{\mathcal{V}}$};

\end{tikzpicture} &
 \tikzset{every picture/.style={line width=0.75pt}} 

\begin{tikzpicture}[x=0.5pt,y=0.5pt,yscale=-1,xscale=1]

\draw  [draw opacity=0][line width=2.25]  (87.13,46.59) .. controls (87.2,46.59) and (87.27,46.59) .. (87.34,46.59) .. controls (108.57,46.59) and (125.77,62.26) .. (125.77,81.59) .. controls (125.77,100.88) and (108.64,116.53) .. (87.47,116.59) -- (87.34,81.59) -- cycle ; \draw  [color={rgb, 255:red, 0; green, 0; blue, 0 }  ,draw opacity=1 ][line width=2.25]  (87.13,46.59) .. controls (87.2,46.59) and (87.27,46.59) .. (87.34,46.59) .. controls (108.57,46.59) and (125.77,62.26) .. (125.77,81.59) .. controls (125.77,100.88) and (108.64,116.53) .. (87.47,116.59) ;  
\draw  [draw opacity=0][dash pattern={on 5.63pt off 4.5pt}][line width=1.5]  (88.97,123.87) .. controls (113.88,123.08) and (133.82,104.46) .. (133.82,81.59) .. controls (133.82,58.23) and (113.01,39.29) .. (87.34,39.29) .. controls (86.91,39.29) and (86.48,39.3) .. (86.05,39.31) -- (87.34,81.59) -- cycle ; \draw [color={rgb, 255:red, 0; green, 0; blue, 0 }  ,draw opacity=1 ][dash pattern={on 5.63pt off 4.5pt}][line width=1.5]  [dash pattern={on 5.63pt off 4.5pt}]  (88.97,123.87) .. controls (113.88,123.08) and (133.82,104.46) .. (133.82,81.59) .. controls (133.82,58.23) and (113.01,39.29) .. (87.34,39.29) .. controls (86.91,39.29) and (86.48,39.3) .. (86.05,39.31) ;  
\draw  [draw opacity=0][line width=2.25]  (207.14,116.54) .. controls (185.93,116.52) and (168.75,100.86) .. (168.75,81.54) .. controls (168.75,62.23) and (185.93,46.57) .. (207.14,46.54) -- (207.19,81.54) -- cycle ; \draw  [color={rgb, 255:red, 0; green, 0; blue, 0 }  ,draw opacity=1 ][line width=2.25]  (207.14,116.54) .. controls (185.93,116.52) and (168.75,100.86) .. (168.75,81.54) .. controls (168.75,62.23) and (185.93,46.57) .. (207.14,46.54) ;  
\draw    (69.25,108.35) -- (92.5,110.75) ;
\draw    (204,111) -- (227.5,106) ;
\draw    (173.75,117.75) -- (163,128.6) ;
\draw  [draw opacity=0][dash pattern={on 5.63pt off 4.5pt}][line width=1.5]  (205.56,39.27) .. controls (180.64,40.05) and (160.71,58.68) .. (160.71,81.54) .. controls (160.71,104.9) and (181.52,123.84) .. (207.19,123.84) .. controls (207.62,123.84) and (208.05,123.84) .. (208.47,123.83) -- (207.19,81.54) -- cycle ; \draw [color={rgb, 255:red, 0; green, 0; blue, 0 }  ,draw opacity=1 ][dash pattern={on 5.63pt off 4.5pt}][line width=1.5]  [dash pattern={on 5.63pt off 4.5pt}]  (205.56,39.27) .. controls (180.64,40.05) and (160.71,58.68) .. (160.71,81.54) .. controls (160.71,104.9) and (181.52,123.84) .. (207.19,123.84) .. controls (207.62,123.84) and (208.05,123.84) .. (208.47,123.83) ;  
\draw    (125.5,116.65) -- (134.75,128.15) ;

\draw (136.75,131.55) node [anchor=north west][inner sep=0.75pt]    {$[ c]\mathcal{_{B'}}$};
\draw (31.9,100.35) node [anchor=north west][inner sep=0.75pt]    {$[ u]_{\mathcal{V}}$};
\draw (231.4,100.6) node [anchor=north west][inner sep=0.75pt]    {$[ v]_{\mathcal{V}}$};

\end{tikzpicture} &
 \tikzset{every picture/.style={line width=0.75pt}} 

\begin{tikzpicture}[x=0.5pt,y=0.5pt,yscale=-1,xscale=1]

\draw  [draw opacity=0][line width=2.25]  (87.13,46.59) .. controls (87.2,46.59) and (87.27,46.59) .. (87.34,46.59) .. controls (108.57,46.59) and (125.77,62.26) .. (125.77,81.59) .. controls (125.77,100.88) and (108.64,116.53) .. (87.47,116.59) -- (87.34,81.59) -- cycle ; \draw  [color={rgb, 255:red, 0; green, 0; blue, 0 }  ,draw opacity=1 ][line width=2.25]  (87.13,46.59) .. controls (87.2,46.59) and (87.27,46.59) .. (87.34,46.59) .. controls (108.57,46.59) and (125.77,62.26) .. (125.77,81.59) .. controls (125.77,100.88) and (108.64,116.53) .. (87.47,116.59) ;  
\draw  [draw opacity=0][dash pattern={on 5.63pt off 4.5pt}][line width=1.5]  (88.97,123.87) .. controls (113.88,123.08) and (133.82,104.46) .. (133.82,81.59) .. controls (133.82,58.23) and (113.01,39.29) .. (87.34,39.29) .. controls (86.91,39.29) and (86.48,39.3) .. (86.05,39.31) -- (87.34,81.59) -- cycle ; \draw [color={rgb, 255:red, 0; green, 0; blue, 0 }  ,draw opacity=1 ][dash pattern={on 5.63pt off 4.5pt}][line width=1.5]  [dash pattern={on 5.63pt off 4.5pt}]  (88.97,123.87) .. controls (113.88,123.08) and (133.82,104.46) .. (133.82,81.59) .. controls (133.82,58.23) and (113.01,39.29) .. (87.34,39.29) .. controls (86.91,39.29) and (86.48,39.3) .. (86.05,39.31) ;  
\draw  [draw opacity=0][line width=2.25]  (207.14,116.54) .. controls (185.93,116.52) and (168.75,100.86) .. (168.75,81.54) .. controls (168.75,62.23) and (185.93,46.57) .. (207.14,46.54) -- (207.19,81.54) -- cycle ; \draw  [color={rgb, 255:red, 0; green, 0; blue, 0 }  ,draw opacity=1 ][line width=2.25]  (207.14,116.54) .. controls (185.93,116.52) and (168.75,100.86) .. (168.75,81.54) .. controls (168.75,62.23) and (185.93,46.57) .. (207.14,46.54) ;  
\draw    (69.25,108.35) -- (92.5,110.75) ;
\draw    (204,111) -- (227.5,106) ;
\draw    (173.75,117.75) -- (163,128.6) ;
\draw  [draw opacity=0][dash pattern={on 5.63pt off 4.5pt}][line width=1.5]  (205.56,39.27) .. controls (180.64,40.05) and (160.71,58.68) .. (160.71,81.54) .. controls (160.71,104.9) and (181.52,123.84) .. (207.19,123.84) .. controls (207.62,123.84) and (208.05,123.84) .. (208.47,123.83) -- (207.19,81.54) -- cycle ; \draw [color={rgb, 255:red, 0; green, 0; blue, 0 }  ,draw opacity=1 ][dash pattern={on 5.63pt off 4.5pt}][line width=1.5]  [dash pattern={on 5.63pt off 4.5pt}]  (205.56,39.27) .. controls (180.64,40.05) and (160.71,58.68) .. (160.71,81.54) .. controls (160.71,104.9) and (181.52,123.84) .. (207.19,123.84) .. controls (207.62,123.84) and (208.05,123.84) .. (208.47,123.83) ;  
\draw    (125.5,116.65) -- (134.75,128.15) ;

\draw (136.75,131.55) node [anchor=north west][inner sep=0.75pt]    {$[ c]\mathcal{_{B'}}$};
\draw (31.9,100.35) node [anchor=north west][inner sep=0.75pt]    {$[ w]_{\mathcal{V} '}$};
\draw (231.4,100.6) node [anchor=north west][inner sep=0.75pt]    {$[ w]_{\mathcal{V} '}$};

\end{tikzpicture} \\ \vspace{5mm}
 $\pack{G}$& $\pack{G}\ba e$ & $\pack{G} \mba e$ \\ \vspace{5mm}
 \tikzset{every picture/.style={line width=0.75pt}} 

\begin{tikzpicture}[x=0.5pt,y=0.5pt,yscale=-1,xscale=1]

\draw  [draw opacity=0][line width=1.5]  (151.6,79.5) .. controls (151.55,79.5) and (151.5,79.5) .. (151.45,79.5) .. controls (130.22,79.5) and (113.01,95.17) .. (113.01,114.5) .. controls (113.01,133.83) and (130.22,149.5) .. (151.45,149.5) .. controls (151.5,149.5) and (151.55,149.5) .. (151.6,149.5) -- (151.45,114.5) -- cycle ; \draw  [draw opacity=0][line width=1.5]  (151.6,79.5) .. controls (151.55,79.5) and (151.5,79.5) .. (151.45,79.5) .. controls (130.22,79.5) and (113.01,95.17) .. (113.01,114.5) .. controls (113.01,133.83) and (130.22,149.5) .. (151.45,149.5) .. controls (151.5,149.5) and (151.55,149.5) .. (151.6,149.5) ;  
\draw  [draw opacity=0][line width=1.5]  (81.3,79.5) .. controls (81.35,79.5) and (81.4,79.5) .. (81.45,79.5) .. controls (102.68,79.5) and (119.89,95.17) .. (119.89,114.5) .. controls (119.89,133.83) and (102.68,149.5) .. (81.45,149.5) .. controls (81.4,149.5) and (81.35,149.5) .. (81.3,149.5) -- (81.45,114.5) -- cycle ; \draw  [draw opacity=0][line width=1.5]  (81.3,79.5) .. controls (81.35,79.5) and (81.4,79.5) .. (81.45,79.5) .. controls (102.68,79.5) and (119.89,95.17) .. (119.89,114.5) .. controls (119.89,133.83) and (102.68,149.5) .. (81.45,149.5) .. controls (81.4,149.5) and (81.35,149.5) .. (81.3,149.5) ;  
\draw  [draw opacity=0][line width=2.25]  (80.3,79.52) .. controls (80.68,79.51) and (81.06,79.5) .. (81.45,79.5) .. controls (97.3,79.5) and (110.91,88.24) .. (116.79,100.71) -- (81.45,114.5) -- cycle ; \draw  [line width=2.25]  (80.3,79.52) .. controls (80.68,79.51) and (81.06,79.5) .. (81.45,79.5) .. controls (97.3,79.5) and (110.91,88.24) .. (116.79,100.71) ;  
\draw  [draw opacity=0][line width=2.25]  (116.81,128.25) .. controls (110.94,140.75) and (97.32,149.5) .. (81.45,149.5) .. controls (80.99,149.5) and (80.54,149.5) .. (80.09,149.48) -- (81.45,114.5) -- cycle ; \draw  [line width=2.25]  (116.81,128.25) .. controls (110.94,140.75) and (97.32,149.5) .. (81.45,149.5) .. controls (80.99,149.5) and (80.54,149.5) .. (80.09,149.48) ;  
\draw  [draw opacity=0][line width=2.25]  (152.6,79.52) .. controls (152.21,79.51) and (151.83,79.5) .. (151.45,79.5) .. controls (135.6,79.5) and (121.99,88.24) .. (116.11,100.71) -- (151.45,114.5) -- cycle ; \draw  [line width=2.25]  (152.6,79.52) .. controls (152.21,79.51) and (151.83,79.5) .. (151.45,79.5) .. controls (135.6,79.5) and (121.99,88.24) .. (116.11,100.71) ;  
\draw  [draw opacity=0][line width=2.25]  (116.49,129.08) .. controls (122.57,141.13) and (135.93,149.5) .. (151.45,149.5) .. controls (151.9,149.5) and (152.35,149.5) .. (152.8,149.48) -- (151.45,114.5) -- cycle ; \draw  [line width=2.25]  (116.49,129.08) .. controls (122.57,141.13) and (135.93,149.5) .. (151.45,149.5) .. controls (151.9,149.5) and (152.35,149.5) .. (152.8,149.48) ;  
\draw  [draw opacity=0][dash pattern={on 5.63pt off 4.5pt}][line width=1.5]  (116.29,89.43) .. controls (109.51,77.84) and (96.13,69.95) .. (80.74,69.95) .. controls (80.62,69.95) and (80.51,69.95) .. (80.4,69.95) -- (80.74,106.75) -- cycle ; \draw [color={rgb, 255:red, 0; green, 0; blue, 0 }  ,draw opacity=1 ][dash pattern={on 5.63pt off 4.5pt}][line width=1.5]  [dash pattern={on 5.63pt off 4.5pt}]  (116.29,89.43) .. controls (109.51,77.84) and (96.13,69.95) .. (80.74,69.95) .. controls (80.62,69.95) and (80.51,69.95) .. (80.4,69.95) ;  
\draw  [draw opacity=0][dash pattern={on 5.63pt off 4.5pt}][line width=1.5]  (116.29,89.43) .. controls (123.12,77.98) and (136.42,70.21) .. (151.7,70.21) .. controls (152.02,70.21) and (152.34,70.21) .. (152.65,70.22) -- (151.7,107) -- cycle ; \draw [color={rgb, 255:red, 0; green, 0; blue, 0 }  ,draw opacity=1 ][dash pattern={on 5.63pt off 4.5pt}][line width=1.5]  [dash pattern={on 5.63pt off 4.5pt}]  (116.29,89.43) .. controls (123.12,77.98) and (136.42,70.21) .. (151.7,70.21) .. controls (152.02,70.21) and (152.34,70.21) .. (152.65,70.22) ;  
\draw  [draw opacity=0][dash pattern={on 5.63pt off 4.5pt}][line width=1.5]  (115.93,140.57) .. controls (122.71,152.16) and (136.09,160.04) .. (151.49,160.04) .. controls (151.6,160.04) and (151.71,160.04) .. (151.82,160.04) -- (151.49,123.25) -- cycle ; \draw [color={rgb, 255:red, 0; green, 0; blue, 0 }  ,draw opacity=1 ][dash pattern={on 5.63pt off 4.5pt}][line width=1.5]  [dash pattern={on 5.63pt off 4.5pt}]  (115.93,140.57) .. controls (122.71,152.16) and (136.09,160.04) .. (151.49,160.04) .. controls (151.6,160.04) and (151.71,160.04) .. (151.82,160.04) ;  
\draw  [draw opacity=0][dash pattern={on 5.63pt off 4.5pt}][line width=1.5]  (115.93,140.57) .. controls (109.1,152.02) and (95.8,159.79) .. (80.52,159.79) .. controls (80.2,159.79) and (79.88,159.78) .. (79.57,159.78) -- (80.52,122.99) -- cycle ; \draw [color={rgb, 255:red, 0; green, 0; blue, 0 }  ,draw opacity=1 ][dash pattern={on 5.63pt off 4.5pt}][line width=1.5]  [dash pattern={on 5.63pt off 4.5pt}]  (115.93,140.57) .. controls (109.1,152.02) and (95.8,159.79) .. (80.52,159.79) .. controls (80.2,159.79) and (79.88,159.78) .. (79.57,159.78) ;  
\draw    (148,138) -- (171.5,133) ;
\draw    (61.25,135.35) -- (84.5,137.75) ;
\draw    (114.75,60.65) -- (114.75,74.9) ;
\draw    (114,158.5) -- (114,172.75) ;

\draw (178.4,127.6) node [anchor=north west][inner sep=0.75pt]    {$[ w]_{\mathcal{V} ''}$};
\draw (10.9,127.35) node [anchor=north west][inner sep=0.75pt]    {$[ w]_{\mathcal{V} ''}$};
\draw (103.9,33.4) node [anchor=north west][inner sep=0.75pt]    {$[ a]\mathcal{_{B}}$};
\draw (102.9,176.85) node [anchor=north west][inner sep=0.75pt]    {$[ b]_{\mathcal{B}}$};

\end{tikzpicture} & \tikzset{every picture/.style={line width=0.75pt}} 

\begin{tikzpicture}[x=0.5pt,y=0.5pt,yscale=-1,xscale=1]

\draw  [draw opacity=0][line width=1.5]  (151.6,79.5) .. controls (151.55,79.5) and (151.5,79.5) .. (151.45,79.5) .. controls (130.22,79.5) and (113.01,95.17) .. (113.01,114.5) .. controls (113.01,133.83) and (130.22,149.5) .. (151.45,149.5) .. controls (151.5,149.5) and (151.55,149.5) .. (151.6,149.5) -- (151.45,114.5) -- cycle ; \draw  [draw opacity=0][line width=1.5]  (151.6,79.5) .. controls (151.55,79.5) and (151.5,79.5) .. (151.45,79.5) .. controls (130.22,79.5) and (113.01,95.17) .. (113.01,114.5) .. controls (113.01,133.83) and (130.22,149.5) .. (151.45,149.5) .. controls (151.5,149.5) and (151.55,149.5) .. (151.6,149.5) ;  
\draw  [draw opacity=0][line width=1.5]  (81.3,79.5) .. controls (81.35,79.5) and (81.4,79.5) .. (81.45,79.5) .. controls (102.68,79.5) and (119.89,95.17) .. (119.89,114.5) .. controls (119.89,133.83) and (102.68,149.5) .. (81.45,149.5) .. controls (81.4,149.5) and (81.35,149.5) .. (81.3,149.5) -- (81.45,114.5) -- cycle ; \draw  [draw opacity=0][line width=1.5]  (81.3,79.5) .. controls (81.35,79.5) and (81.4,79.5) .. (81.45,79.5) .. controls (102.68,79.5) and (119.89,95.17) .. (119.89,114.5) .. controls (119.89,133.83) and (102.68,149.5) .. (81.45,149.5) .. controls (81.4,149.5) and (81.35,149.5) .. (81.3,149.5) ;  
\draw  [draw opacity=0][line width=2.25]  (80.3,79.52) .. controls (80.68,79.51) and (81.06,79.5) .. (81.45,79.5) .. controls (97.3,79.5) and (110.91,88.24) .. (116.79,100.71) -- (81.45,114.5) -- cycle ; \draw  [line width=2.25]  (80.3,79.52) .. controls (80.68,79.51) and (81.06,79.5) .. (81.45,79.5) .. controls (97.3,79.5) and (110.91,88.24) .. (116.79,100.71) ;  
\draw  [draw opacity=0][line width=2.25]  (116.81,128.25) .. controls (110.94,140.75) and (97.32,149.5) .. (81.45,149.5) .. controls (80.99,149.5) and (80.54,149.5) .. (80.09,149.48) -- (81.45,114.5) -- cycle ; \draw  [line width=2.25]  (116.81,128.25) .. controls (110.94,140.75) and (97.32,149.5) .. (81.45,149.5) .. controls (80.99,149.5) and (80.54,149.5) .. (80.09,149.48) ;  
\draw  [draw opacity=0][line width=2.25]  (152.6,79.52) .. controls (152.21,79.51) and (151.83,79.5) .. (151.45,79.5) .. controls (135.6,79.5) and (121.99,88.24) .. (116.11,100.71) -- (151.45,114.5) -- cycle ; \draw  [line width=2.25]  (152.6,79.52) .. controls (152.21,79.51) and (151.83,79.5) .. (151.45,79.5) .. controls (135.6,79.5) and (121.99,88.24) .. (116.11,100.71) ;  
\draw  [draw opacity=0][line width=2.25]  (116.49,129.08) .. controls (122.57,141.13) and (135.93,149.5) .. (151.45,149.5) .. controls (151.9,149.5) and (152.35,149.5) .. (152.8,149.48) -- (151.45,114.5) -- cycle ; \draw  [line width=2.25]  (116.49,129.08) .. controls (122.57,141.13) and (135.93,149.5) .. (151.45,149.5) .. controls (151.9,149.5) and (152.35,149.5) .. (152.8,149.48) ;  
\draw  [draw opacity=0][dash pattern={on 5.63pt off 4.5pt}][line width=1.5]  (116.29,89.43) .. controls (109.51,77.84) and (96.13,69.95) .. (80.74,69.95) .. controls (80.62,69.95) and (80.51,69.95) .. (80.4,69.95) -- (80.74,106.75) -- cycle ; \draw [color={rgb, 255:red, 0; green, 0; blue, 0 }  ,draw opacity=1 ][dash pattern={on 5.63pt off 4.5pt}][line width=1.5]  [dash pattern={on 5.63pt off 4.5pt}]  (116.29,89.43) .. controls (109.51,77.84) and (96.13,69.95) .. (80.74,69.95) .. controls (80.62,69.95) and (80.51,69.95) .. (80.4,69.95) ;  
\draw  [draw opacity=0][dash pattern={on 5.63pt off 4.5pt}][line width=1.5]  (116.29,89.43) .. controls (123.12,77.98) and (136.42,70.21) .. (151.7,70.21) .. controls (152.02,70.21) and (152.34,70.21) .. (152.65,70.22) -- (151.7,107) -- cycle ; \draw [color={rgb, 255:red, 0; green, 0; blue, 0 }  ,draw opacity=1 ][dash pattern={on 5.63pt off 4.5pt}][line width=1.5]  [dash pattern={on 5.63pt off 4.5pt}]  (116.29,89.43) .. controls (123.12,77.98) and (136.42,70.21) .. (151.7,70.21) .. controls (152.02,70.21) and (152.34,70.21) .. (152.65,70.22) ;  
\draw  [draw opacity=0][dash pattern={on 5.63pt off 4.5pt}][line width=1.5]  (115.93,140.57) .. controls (122.71,152.16) and (136.09,160.04) .. (151.49,160.04) .. controls (151.6,160.04) and (151.71,160.04) .. (151.82,160.04) -- (151.49,123.25) -- cycle ; \draw [color={rgb, 255:red, 0; green, 0; blue, 0 }  ,draw opacity=1 ][dash pattern={on 5.63pt off 4.5pt}][line width=1.5]  [dash pattern={on 5.63pt off 4.5pt}]  (115.93,140.57) .. controls (122.71,152.16) and (136.09,160.04) .. (151.49,160.04) .. controls (151.6,160.04) and (151.71,160.04) .. (151.82,160.04) ;  
\draw  [draw opacity=0][dash pattern={on 5.63pt off 4.5pt}][line width=1.5]  (115.93,140.57) .. controls (109.1,152.02) and (95.8,159.79) .. (80.52,159.79) .. controls (80.2,159.79) and (79.88,159.78) .. (79.57,159.78) -- (80.52,122.99) -- cycle ; \draw [color={rgb, 255:red, 0; green, 0; blue, 0 }  ,draw opacity=1 ][dash pattern={on 5.63pt off 4.5pt}][line width=1.5]  [dash pattern={on 5.63pt off 4.5pt}]  (115.93,140.57) .. controls (109.1,152.02) and (95.8,159.79) .. (80.52,159.79) .. controls (80.2,159.79) and (79.88,159.78) .. (79.57,159.78) ;  
\draw    (148,138) -- (171.5,133) ;
\draw    (61.25,135.35) -- (84.5,137.75) ;
\draw    (114.75,60.65) -- (114.75,74.9) ;
\draw    (114,158.5) -- (114,172.75) ;

\draw (178.4,127.6) node [anchor=north west][inner sep=0.75pt]    {$[ w]_{\mathcal{V} ''}$};
\draw (10.9,127.35) node [anchor=north west][inner sep=0.75pt]    {$[ w]_{\mathcal{V} ''}$};
\draw (103.9,33.4) node [anchor=north west][inner sep=0.75pt]    {$[ c]\mathcal{_{B''}}$};
\draw (102.9,176.85) node [anchor=north west][inner sep=0.75pt]    {$[ c]_{\mathcal{B} ''}$};

\end{tikzpicture} &
 \tikzset{every picture/.style={line width=0.75pt}} 

\begin{tikzpicture}[x=0.5pt,y=0.5pt,yscale=-1,xscale=1]

\draw  [draw opacity=0][line width=1.5]  (67.3,66.5) .. controls (67.35,66.5) and (67.4,66.5) .. (67.45,66.5) .. controls (88.68,66.5) and (105.89,82.17) .. (105.89,101.5) .. controls (105.89,120.83) and (88.68,136.5) .. (67.45,136.5) .. controls (67.4,136.5) and (67.35,136.5) .. (67.3,136.5) -- (67.45,101.5) -- cycle ; \draw  [draw opacity=0][line width=1.5]  (67.3,66.5) .. controls (67.35,66.5) and (67.4,66.5) .. (67.45,66.5) .. controls (88.68,66.5) and (105.89,82.17) .. (105.89,101.5) .. controls (105.89,120.83) and (88.68,136.5) .. (67.45,136.5) .. controls (67.4,136.5) and (67.35,136.5) .. (67.3,136.5) ;  
\draw  [draw opacity=0][line width=1.5]  (164.6,65.5) .. controls (164.55,65.5) and (164.5,65.5) .. (164.45,65.5) .. controls (143.22,65.5) and (126.01,81.17) .. (126.01,100.5) .. controls (126.01,119.83) and (143.22,135.5) .. (164.45,135.5) .. controls (164.5,135.5) and (164.55,135.5) .. (164.6,135.5) -- (164.45,100.5) -- cycle ; \draw  [draw opacity=0][line width=1.5]  (164.6,65.5) .. controls (164.55,65.5) and (164.5,65.5) .. (164.45,65.5) .. controls (143.22,65.5) and (126.01,81.17) .. (126.01,100.5) .. controls (126.01,119.83) and (143.22,135.5) .. (164.45,135.5) .. controls (164.5,135.5) and (164.55,135.5) .. (164.6,135.5) ;  
\draw [line width=2.25]    (118.6,104.05) -- (133.99,115.83) ;
\draw [line width=2.25]    (97.04,87.52) -- (104.85,93.8) ;
\draw [color={rgb, 255:red, 0; green, 0; blue, 0 }  ,draw opacity=1 ][line width=1.5]  [dash pattern={on 5.63pt off 4.5pt}]  (102.17,81.45) -- (111.6,89.3) ;
\draw  [draw opacity=0][line width=2.25]  (60.3,65.77) .. controls (60.68,65.76) and (61.06,65.75) .. (61.45,65.75) .. controls (77.92,65.75) and (91.97,75.18) .. (97.44,88.44) -- (61.45,100.75) -- cycle ; \draw  [line width=2.25]  (60.3,65.77) .. controls (60.68,65.76) and (61.06,65.75) .. (61.45,65.75) .. controls (77.92,65.75) and (91.97,75.18) .. (97.44,88.44) ;  
\draw  [draw opacity=0][line width=2.25]  (97.72,112.35) .. controls (92.47,125.98) and (78.21,135.75) .. (61.45,135.75) .. controls (60.99,135.75) and (60.54,135.75) .. (60.09,135.73) -- (61.45,100.75) -- cycle ; \draw  [line width=2.25]  (97.72,112.35) .. controls (92.47,125.98) and (78.21,135.75) .. (61.45,135.75) .. controls (60.99,135.75) and (60.54,135.75) .. (60.09,135.73) ;  
\draw  [draw opacity=0][line width=2.25]  (169.85,66.27) .. controls (169.46,66.26) and (169.08,66.25) .. (168.7,66.25) .. controls (152.24,66.25) and (138.19,75.67) .. (132.72,88.92) -- (168.7,101.25) -- cycle ; \draw  [line width=2.25]  (169.85,66.27) .. controls (169.46,66.26) and (169.08,66.25) .. (168.7,66.25) .. controls (152.24,66.25) and (138.19,75.67) .. (132.72,88.92) ;  
\draw  [draw opacity=0][line width=2.25]  (133.24,114.79) .. controls (139.05,127.4) and (152.74,136.25) .. (168.7,136.25) .. controls (169.15,136.25) and (169.6,136.25) .. (170.05,136.23) -- (168.7,101.25) -- cycle ; \draw  [line width=2.25]  (133.24,114.79) .. controls (139.05,127.4) and (152.74,136.25) .. (168.7,136.25) .. controls (169.15,136.25) and (169.6,136.25) .. (170.05,136.23) ;  
\draw  [draw opacity=0][dash pattern={on 5.63pt off 4.5pt}][line width=1.5]  (102.17,81.45) .. controls (96.65,67.15) and (81.71,56.9) .. (64.16,56.9) .. controls (63.63,56.9) and (63.11,56.91) .. (62.58,56.92) -- (64.16,93.69) -- cycle ; \draw [color={rgb, 255:red, 0; green, 0; blue, 0 }  ,draw opacity=1 ][dash pattern={on 5.63pt off 4.5pt}][line width=1.5]  [dash pattern={on 5.63pt off 4.5pt}]  (102.17,81.45) .. controls (96.65,67.15) and (81.71,56.9) .. (64.16,56.9) .. controls (63.63,56.9) and (63.11,56.91) .. (62.58,56.92) ;  
\draw  [draw opacity=0][dash pattern={on 5.63pt off 4.5pt}][line width=1.5]  (129.98,81.6) .. controls (136.18,68.57) and (150.4,59.46) .. (166.95,59.46) .. controls (167.9,59.46) and (168.85,59.49) .. (169.79,59.55) -- (166.95,96.25) -- cycle ; \draw [color={rgb, 255:red, 0; green, 0; blue, 0 }  ,draw opacity=1 ][dash pattern={on 5.63pt off 4.5pt}][line width=1.5]  [dash pattern={on 5.63pt off 4.5pt}]  (129.98,81.6) .. controls (136.18,68.57) and (150.4,59.46) .. (166.95,59.46) .. controls (167.9,59.46) and (168.85,59.49) .. (169.79,59.55) ;  
\draw  [draw opacity=0][dash pattern={on 5.63pt off 4.5pt}][line width=1.5]  (170.2,130.74) .. controls (156.4,129.49) and (144.79,121.8) .. (139.37,111.17) -- (174.16,97.64) -- cycle ; \draw [color={rgb, 255:red, 0; green, 0; blue, 0 }  ,draw opacity=1 ][dash pattern={on 5.63pt off 4.5pt}][line width=1.5]  [dash pattern={on 5.63pt off 4.5pt}]  (170.2,130.74) .. controls (156.4,129.49) and (144.79,121.8) .. (139.37,111.17) ;  
\draw [line width=2.25]    (97.21,113.61) -- (133.11,88) ;
\draw [color={rgb, 255:red, 0; green, 0; blue, 0 }  ,draw opacity=1 ][line width=1.5]  [dash pattern={on 5.63pt off 4.5pt}]  (95.03,106.25) -- (129.23,80.35) ;
\draw [color={rgb, 255:red, 0; green, 0; blue, 0 }  ,draw opacity=1 ][line width=1.5]  [dash pattern={on 5.63pt off 4.5pt}]  (139.37,111.17) -- (125.1,99.55) ;
\draw  [draw opacity=0][dash pattern={on 5.63pt off 4.5pt}][line width=1.5]  (60.29,128.97) .. controls (76.51,128.36) and (90.04,118.97) .. (95.03,106.25) -- (59.02,95.72) -- cycle ; \draw [color={rgb, 255:red, 0; green, 0; blue, 0 }  ,draw opacity=1 ][dash pattern={on 5.63pt off 4.5pt}][line width=1.5]  [dash pattern={on 5.63pt off 4.5pt}]  (60.29,128.97) .. controls (76.51,128.36) and (90.04,118.97) .. (95.03,106.25) ;  
\draw    (122.25,143.25) -- (137,130.6) ;
\draw    (72,140.6) -- (85,152.6) ;
\draw    (129,46.4) -- (140,55.6) ;
\draw    (91,43.6) -- (79,50.6) ;

\draw (91.9,17.4) node [anchor=north west][inner sep=0.75pt]    {$[ c]\mathcal{_{B'''}}$};
\draw (89.9,146.25) node [anchor=north west][inner sep=0.75pt]    {$[ w]_{\mathcal{V} '''}$};

\end{tikzpicture}\\ \vspace{5mm}
   $\pack{G}\con e$& $\pack{G}\mcon e$ & $\pack{G} \pcon e$
\end{tabular}
\caption{Operations on an edge $e$ of a packaged arrow presentation $\pack{G}=(\ar{G}, \V,\B)$. Here $[u]_{\V}$ and $[v]_{\V}$ may be equal, as may $[a]_{\B}$ and $[b]_{\B}$.}
\label{tab:edgeops}
\end{table}

For defining the five operations, let $\pack{G}=(\ar{G}, \V,\B)$ be a packaged arrow presentation containing an edge  $e$. Let $u$ and $v$ be the vertices incident to $e$ ($u$ may equal $v$), $[u]$ and $[v]$ be their vertex classes ($[u]$ may equal $[v]$); and let $a$ and $b$ be the boundary components incident to $e$ ($a$ may equal $b$), and $[a]$ and $[b]$ be their boundary classes ($[a]$ may equal $[b]$).
Observe the vertices of $\ar{G}$ and $\ar{G}\ba e$ are naturally identified. 
For $\ar{G}/ e$ (respectively, $\ar{G}\pcon e$), each vertex of $\ar{G}$ other than $u$ and $v$ is naturally identified with a vertex in $\ar{G}/ e$ (respectively, $\ar{G}\pcon e$), as these vertices are not altered. We say the remaining vertices  in $\ar{G}/ e$ (respectively, $\ar{G}\pcon e$) are \emph{created by the contraction} (respectively, \emph{created by the Penrose-contraction}).
Similarly, the boundary components of $\ar{G}$ and $\ar{G}/ e$ are naturally identified.
For $\ar{G}\ba e$ (respectively, $\ar{G}\pcon e$) each boundary component of $\ar{G}$ other than $a$ and $b$ is naturally identified with  a boundary component in $\ar{G}\ba e$  (respectively, $\ar{G}\pcon e$), as these boundary components are not altered. We say the remaining boundary components  in $\ar{G}\ba  e$  (respectively, $\ar{G}\pcon e$) are \emph{created by the deletion} (respectively, \emph{created by the Penrose-contraction}).

\begin{itemize}

\item Let $S$ be the set of boundary components of  $\ar{G}\ba e$ created by the deletion, and let 
\[ \V' = (\V-\{ [u],[v] \}) \cup \{[u]\cup [v]  \} ,\]
and
\[  \B' = (\B-\{ [a],[b] \}) \cup \{[a]\cup[b]\cup S -\{a,b\} \}.\]
Then
\begin{itemize}
\item $\pack{G}$ \emph{delete} $e$, denoted $\pack{G}\ba e$, is the packaged arrow presentation $(\ar{G}\ba e, \V,\B')$; and 
\item $\pack{G}$ \emph{merge-delete} $e$, denoted $\pack{G}\mba e$,  is the packaged arrow presentation $(\ar{G}\ba e, \V',\B')$.
\end{itemize}

\item Let $T$ be the set of vertices of  $\ar{G}/ e$ created by the contraction, and let 
\[ \V'' = (\V-\{ [u],[v] \}) \cup \{[u]\cup [v] \cup T -\{u,v\} \} ,\]
and
\[ \B'' = (\B-\{ [a],[b] \}) \cup \{[a]\cup [b]  \} .\]
Then
\begin{itemize}
\item $\pack{G}$ \emph{contract} $e$, denoted $\pack{G}/ e$,  is the packaged arrow presentation $(\ar{G}/ e, \V'',\B)$; and

\item $\pack{G}$ \emph{merge-contract} $e$, denoted $\pack{G}\mcon e$,  is the packaged arrow presentation $(\ar{G}/ e, \V'',\B'')$.

\end{itemize}

 \item $\pack{G}$ \emph{Penrose-contract} $e$, denoted $\pack{G}\pcon e$,  is the packaged arrow presentation $(\ar{G}\pcon e, \V''',\B'')$ where
 \[ \V''' = (\V-\{ [u],[v] \}) \cup \{[u]\cup [v] \cup T -\{u,v\}  \}, \]
where  $T$ is the set of vertices of  $\ar{G}\pcon e$ created by the Penrose-contraction; and 
 \[\B''' = (\B-\{ [a],[b] \}) \cup \{[a]\cup[b]\cup S -\{a,b\} \},\]
where  $S$ is the set of boundary components of  $\ar{G}\pcon e$ created by the Penrose-contraction. 
\end{itemize}

\begin{figure}
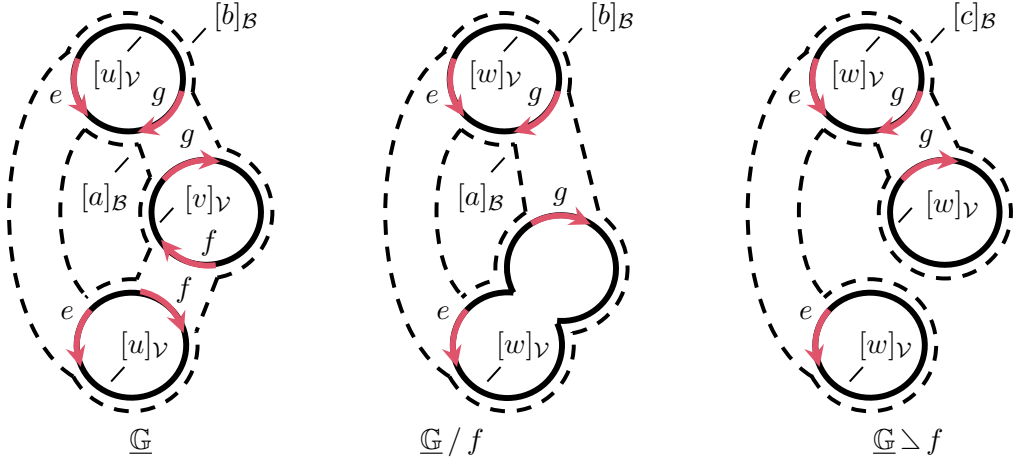

    \centering
    \begin{tabular}{ccc}
      \input{Diagrams/ExPack}\qquad&\qquad
      \input{Diagrams/ExPackCon}\qquad & \qquad\tikzset{every picture/.style={line width=0.75pt}} 

\begin{tikzpicture}[x=0.5pt,y=0.5pt,yscale=-1,xscale=1]

\draw  [line width=2.25]  (130.14,53.5) .. controls (152.27,47.99) and (174.87,60.75) .. (180.61,82) .. controls (186.35,103.24) and (173.06,124.93) .. (150.93,130.44) .. controls (128.79,135.95) and (106.2,123.2) .. (100.46,101.95) .. controls (94.71,80.7) and (108,59.01) .. (130.14,53.5) -- cycle ;
\draw  [draw opacity=0][line width=2.25]  (102.55,76.64) .. controls (99.44,84.25) and (98.5,92.84) .. (100.34,101.43) .. controls (101.93,108.84) and (105.39,115.33) .. (110.11,120.49) -- (140.18,92.05) -- cycle ; \draw [color={rgb, 255:red, 223; green, 83; blue, 107 }  ,draw opacity=1 ][line width=2.25]    (102.55,76.64) .. controls (99.44,84.25) and (98.5,92.84) .. (100.34,101.43) .. controls (101.56,107.1) and (103.87,112.23) .. (107,116.63) ; \draw [shift={(110.11,120.49)}, rotate = 235.65] [fill={rgb, 255:red, 223; green, 83; blue, 107 }  ,fill opacity=1 ][line width=0.08]  [draw opacity=0] (16.07,-7.72) -- (0,0) -- (16.07,7.72) -- (10.67,0) -- cycle    ; 
\draw  [line width=2.25]  (154.18,331.85) .. controls (132.17,337.79) and (109.31,325.47) .. (103.12,304.34) .. controls (96.93,283.21) and (109.76,261.26) .. (131.77,255.32) .. controls (153.79,249.39) and (176.65,261.7) .. (182.83,282.83) .. controls (189.02,303.96) and (176.19,325.91) .. (154.18,331.85) -- cycle ;
\draw  [draw opacity=0][line width=2.25]  (111.93,266.92) .. controls (106.71,272.87) and (103.11,280.31) .. (101.92,288.65) .. controls (100.85,296.22) and (101.91,303.56) .. (104.65,310.07) -- (142.98,293.59) -- cycle ; \draw [color={rgb, 255:red, 223; green, 83; blue, 107 }  ,draw opacity=1 ][line width=2.25]    (111.93,266.92) .. controls (106.71,272.87) and (103.11,280.31) .. (101.92,288.65) .. controls (101.1,294.48) and (101.53,300.17) .. (103.03,305.46) ; \draw [shift={(104.65,310.07)}, rotate = 255.4] [fill={rgb, 255:red, 223; green, 83; blue, 107 }  ,fill opacity=1 ][line width=0.08]  [draw opacity=0] (16.07,-7.72) -- (0,0) -- (16.07,7.72) -- (10.67,0) -- cycle    ; 
\draw  [line width=2.25]  (189.14,154.5) .. controls (211.27,148.99) and (233.87,161.75) .. (239.61,183) .. controls (245.35,204.24) and (232.06,225.93) .. (209.93,231.44) .. controls (187.79,236.95) and (165.2,224.2) .. (159.46,202.95) .. controls (153.71,181.7) and (167,160.01) .. (189.14,154.5) -- cycle ;
\draw  [draw opacity=0][line width=2.25]  (210.07,154.6) .. controls (201.89,152.62) and (192.97,153.02) .. (184.49,156.29) .. controls (177.59,158.95) and (171.82,163.18) .. (167.46,168.37) -- (199.57,193.08) -- cycle ; \draw [color={rgb, 255:red, 223; green, 83; blue, 107 }  ,draw opacity=1 ][line width=2.25]    (205.08,153.7) .. controls (198.34,152.9) and (191.28,153.68) .. (184.49,156.29) .. controls (177.59,158.95) and (171.82,163.18) .. (167.46,168.37) ;  \draw [shift={(210.07,154.6)}, rotate = 185.46] [fill={rgb, 255:red, 223; green, 83; blue, 107 }  ,fill opacity=1 ][line width=0.08]  [draw opacity=0] (16.07,-7.72) -- (0,0) -- (16.07,7.72) -- (10.67,0) -- cycle    ;
\draw  [draw opacity=0][line width=2.25]  (147.32,132.22) .. controls (155.96,130.81) and (164.23,126.73) .. (170.74,120.02) .. controls (176.05,114.54) and (179.5,108.01) .. (181.13,101.15) -- (140.53,91.97) -- cycle ; \draw [color={rgb, 255:red, 223; green, 83; blue, 107 }  ,draw opacity=1 ][line width=2.25]    (152.2,131.13) .. controls (159.04,129.17) and (165.47,125.46) .. (170.74,120.02) .. controls (176.05,114.54) and (179.5,108.01) .. (181.13,101.15) ;  \draw [shift={(147.32,132.22)}, rotate = 342.57] [fill={rgb, 255:red, 223; green, 83; blue, 107 }  ,fill opacity=1 ][line width=0.08]  [draw opacity=0] (16.07,-7.72) -- (0,0) -- (16.07,7.72) -- (10.67,0) -- cycle    ;
\draw  [draw opacity=0][dash pattern={on 5.63pt off 4.5pt}][line width=1.5]  (98.5,316.44) .. controls (106.8,332.56) and (123.6,343.59) .. (142.98,343.59) .. controls (170.59,343.59) and (192.98,321.2) .. (192.98,293.59) .. controls (192.98,265.97) and (170.59,243.59) .. (142.98,243.59) .. controls (129.51,243.59) and (117.29,248.91) .. (108.3,257.56) -- (142.98,293.59) -- cycle ; \draw [color={rgb, 255:red, 0; green, 0; blue, 0 }  ,draw opacity=1 ][dash pattern={on 5.63pt off 4.5pt}][line width=1.5]  [dash pattern={on 5.63pt off 4.5pt}]  (98.5,316.44) .. controls (106.8,332.56) and (123.6,343.59) .. (142.98,343.59) .. controls (170.59,343.59) and (192.98,321.2) .. (192.98,293.59) .. controls (192.98,265.97) and (170.59,243.59) .. (142.98,243.59) .. controls (129.51,243.59) and (117.29,248.91) .. (108.3,257.56) ;  
\draw    (189.1,64.6) -- (199.1,53.6) ;
\draw    (127.6,321.1) -- (137.6,310.1) ;
\draw    (164.6,200.6) -- (174.6,189.6) ;
\draw    (140.1,71.1) -- (150.1,60.1) ;
\draw  [draw opacity=0][dash pattern={on 5.63pt off 4.5pt}][line width=1.5]  (158.01,165.28) .. controls (152.68,173.23) and (149.57,182.79) .. (149.57,193.08) .. controls (149.57,220.69) and (171.96,243.08) .. (199.57,243.08) .. controls (227.19,243.08) and (249.57,220.69) .. (249.57,193.08) .. controls (249.57,169) and (232.56,148.9) .. (209.89,144.14) -- (199.57,193.08) -- cycle ; \draw [color={rgb, 255:red, 0; green, 0; blue, 0 }  ,draw opacity=1 ][dash pattern={on 5.63pt off 4.5pt}][line width=1.5]  [dash pattern={on 5.63pt off 4.5pt}]  (158.01,165.28) .. controls (152.68,173.23) and (149.57,182.79) .. (149.57,193.08) .. controls (149.57,220.69) and (171.96,243.08) .. (199.57,243.08) .. controls (227.19,243.08) and (249.57,220.69) .. (249.57,193.08) .. controls (249.57,169) and (232.56,148.9) .. (209.89,144.14) ;  
\draw  [draw opacity=0][dash pattern={on 5.63pt off 4.5pt}][line width=1.5]  (189.16,102.18) .. controls (189.83,98.91) and (190.18,95.52) .. (190.18,92.05) .. controls (190.18,64.44) and (167.8,42.05) .. (140.18,42.05) .. controls (119.46,42.05) and (101.67,54.67) .. (94.09,72.63) -- (140.18,92.05) -- cycle ; \draw [color={rgb, 255:red, 0; green, 0; blue, 0 }  ,draw opacity=1 ][dash pattern={on 5.63pt off 4.5pt}][line width=1.5]  [dash pattern={on 5.63pt off 4.5pt}]  (189.16,102.18) .. controls (189.83,98.91) and (190.18,95.52) .. (190.18,92.05) .. controls (190.18,64.44) and (167.8,42.05) .. (140.18,42.05) .. controls (119.46,42.05) and (101.67,54.67) .. (94.09,72.63) ;  
\draw  [draw opacity=0][dash pattern={on 5.63pt off 4.5pt}][line width=1.5]  (107.83,130.18) .. controls (116.55,137.59) and (127.84,142.05) .. (140.18,142.05) .. controls (142.99,142.05) and (145.74,141.82) .. (148.43,141.38) -- (140.18,92.05) -- cycle ; \draw [color={rgb, 255:red, 0; green, 0; blue, 0 }  ,draw opacity=1 ][dash pattern={on 5.63pt off 4.5pt}][line width=1.5]  [dash pattern={on 5.63pt off 4.5pt}]  (107.83,130.18) .. controls (116.55,137.59) and (127.84,142.05) .. (140.18,142.05) .. controls (142.99,142.05) and (145.74,141.82) .. (148.43,141.38) ;  
\draw [line width=1.5]  [dash pattern={on 5.63pt off 4.5pt}]  (148.43,141.38) -- (157.28,166.4) ;
\draw [line width=1.5]  [dash pattern={on 5.63pt off 4.5pt}]  (189.16,102.18) -- (209.89,144.14) ;
\draw  [draw opacity=0][dash pattern={on 5.63pt off 4.5pt}][line width=1.5]  (106.46,253.87) .. controls (96.15,242.26) and (89.13,219.25) .. (89.13,192.75) .. controls (89.13,165.1) and (96.78,141.24) .. (107.83,130.18) -- (121.71,192.75) -- cycle ; \draw [color={rgb, 255:red, 0; green, 0; blue, 0 }  ,draw opacity=1 ][dash pattern={on 5.63pt off 4.5pt}][line width=1.5]  [dash pattern={on 5.63pt off 4.5pt}]  (106.46,253.87) .. controls (96.15,242.26) and (89.13,219.25) .. (89.13,192.75) .. controls (89.13,165.1) and (96.78,141.24) .. (107.83,130.18) ;  
\draw  [draw opacity=0][dash pattern={on 5.63pt off 4.5pt}][line width=1.5]  (98.94,317.29) .. controls (70.77,300.59) and (50.36,251.39) .. (50.36,193.31) .. controls (50.36,137.12) and (69.45,89.26) .. (96.2,71.07) -- (120.09,193.31) -- cycle ; \draw [color={rgb, 255:red, 0; green, 0; blue, 0 }  ,draw opacity=1 ][dash pattern={on 5.63pt off 4.5pt}][line width=1.5]  [dash pattern={on 5.63pt off 4.5pt}]  (98.94,317.29) .. controls (70.77,300.59) and (50.36,251.39) .. (50.36,193.31) .. controls (50.36,137.12) and (69.45,89.26) .. (96.2,71.07) ;  

\draw (87,262.4) node [anchor=north west][inner sep=0.75pt]    {$e$};
\draw (78.5,98.8) node [anchor=north west][inner sep=0.75pt]    {$e$};
\draw (156,95.9) node [anchor=north west][inner sep=0.75pt]    {$g$};
\draw (177.64,128.4) node [anchor=north west][inner sep=0.75pt]    {$g$};
\draw (111,75.6) node [anchor=north west][inner sep=0.75pt]    {$[ w]_{\mathcal{V}}$};
\draw (202.75,31.4) node [anchor=north west][inner sep=0.75pt]    {$[ c]_{\mathcal{B}}$};
\draw (132,280.6) node [anchor=north west][inner sep=0.75pt]    {$[ w]_{\mathcal{V}}$};
\draw (182.5,174.1) node [anchor=north west][inner sep=0.75pt]    {$[ w]_{\mathcal{V}}$};

\end{tikzpicture} \\
    $ \pack{G}$& $\pack{G}\con f$\qquad\qquad & \qquad\qquad  $\pack{G}\mba f$
    \end{tabular}
    \caption{Contracting and merge-deleting the edge $f$.}
    \label{expackdelcon}
\end{figure}

\begin{lemma}\label{l.cmmt}
    The  operations of deletion, contraction, Penrose-contraction, merge-deletion and merge-contraction commute with each other and with one another when acting on different edges.
\end{lemma}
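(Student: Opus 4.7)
The plan is to reduce the statement to commutativity of the join operation on partitions, using locality of the edge operations. Recall from Subsection~\ref{ss:ap} that deletion, contraction, and Penrose-contraction of arrow presentations commute at distinct edges; since merge-deletion and merge-contraction modify the underlying arrow presentation in exactly the same way as deletion and contraction respectively, this commutativity lifts to all five operations at the level of the underlying arrow presentation. Thus for any $\star_1,\star_2\in\{\ba,\con,\pcon,\mba,\mcon\}$ and distinct $e_1,e_2\in E(\ar{G})$, the arrow presentations underlying $(\pack{G}\star_1 e_1)\star_2 e_2$ and $(\pack{G}\star_2 e_2)\star_1 e_1$ coincide, and the task reduces to checking that the associated vertex and boundary partitions also agree.

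I would then package each of the five operations into a uniform update rule. For a single operation $\star$ at an edge $e$ with endpoints $u,v$ and incident boundary components $a,b$, inspection of the definitions shows that the new vertex partition is either $\V$ itself or the join of $\V$ with the partition $\V_\star(e)$ whose unique nontrivial block is $[u]\cup[v]\cup T$, where $T$ is the set of vertices created by $\star$; analogously, the new boundary partition is either $\B$ or the join of $\B$ with a partition $\B_\star(e)$ whose unique nontrivial block is $[a]\cup[b]\cup S$, where $S$ is the set of boundary components created by $\star$. The key point is that $u,v,a,b,T,S$ are determined entirely by the local structure of $\ar{G}$ in a neighbourhood of $e$.

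With this in hand, locality does the rest: because $\star_1$ at $e_1$ modifies $\ar{G}$ only in a neighbourhood of $e_1$, it leaves intact the arrows, arcs and local identifications determining $\V_{\star_2}(e_2)$ and $\B_{\star_2}(e_2)$, up to the natural identification of vertices and boundary components before and after $\star_1$. Hence applying $\star_1$ then $\star_2$ joins $(\V,\B)$ with $\V_{\star_1}(e_1),\V_{\star_2}(e_2)$ and $\B_{\star_1}(e_1),\B_{\star_2}(e_2)$, and the reverse order joins with the same four partitions. Since the join of partitions is commutative and associative, the final vertex and boundary partitions agree, as required.

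The main obstacle will be the bookkeeping when vertices or boundary components indexing the updates get absorbed into larger classes by the first operation; for instance, if the boundary component $a_2$ incident to $e_2$ coincides with $a_1$, $b_1$, or lies in $S_1$, then after $\star_1$ the relevant block in the boundary partition has already been enlarged. The cleanest way to handle this is through the join formulation above: once each operation is encoded as a join against an edge-local auxiliary partition, the identification of absorbed elements is transparent and commutativity follows without a case-by-case analysis over all pairs of operations.
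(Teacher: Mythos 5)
Your proposal is correct and follows the same overall reduction as the paper: commutativity is inherited from the arrow-presentation level (where merge-deletion and merge-contraction act as ordinary deletion and contraction on $\ar{G}$), leaving only the check that the resulting vertex and boundary partitions agree. The paper stops at that point, declaring the partition check ``a straightforward but lengthy calculation'' and omitting it; you go further by proposing an organizing device, namely encoding the effect of each operation on $\V$ and $\B$ as a join with an edge-local auxiliary partition and then invoking commutativity and associativity of the join. That is a genuine structural improvement, since it collapses a $5\times 5$ case analysis over pairs of operations into one uniform argument. The one place where your argument remains an assertion rather than a proof is the claim that the auxiliary partitions attached to $e_2$, computed in $\ar{G}$ and in $\ar{G}\star_1 e_1$, correspond under the natural identification of vertices and boundary components --- precisely in the delicate situation where an endpoint or incident boundary component of $e_2$ is destroyed by the first operation and replaced by created elements (note also that the ``join'' here is taken over a changing ground set, so the identification must be set up with some care). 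Verifying that correspondence is exactly the content of the calculation the paper omits, so your write-up is at essentially the same level of completeness as the published proof, but it points to a cleaner way of finishing it.
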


\begin{proof}
    As noted above,  deletion, contraction and Penrose-contraction commute for arrow presentations. All that remains is to check that the resulting partitions are the same. Checking this is a straightforward but lengthy calculation and we omit the details. 
\end{proof}

\subsection{2-sums and tensor products for packaged arrow presentations}

We shall now extend the definition of 2-sums to packaged arrow presentations. Although this 2-sum is in practice straightforward, the formal definition is rather cumbersome. This is because we need to  keep track of the  vertices and boundary components affected by the 2-sum. To aid the reader the naming of the vertex and boundary components used in the definition is indicated in Figure~\ref{f.dog1}, while Figure~\ref{f.dog2} indicates how the vertex and boundary partitions are determined. Note that in the definition  named vertices and boundary components need not be distinct (for example, $[u]_{\V}$ may equal $[v]_{\V}$, etc.). 
An example of a 2-sum of two packaged arrow presentations is given in Figure~\ref{expack2sum}.

\begin{figure}
\centering
\input{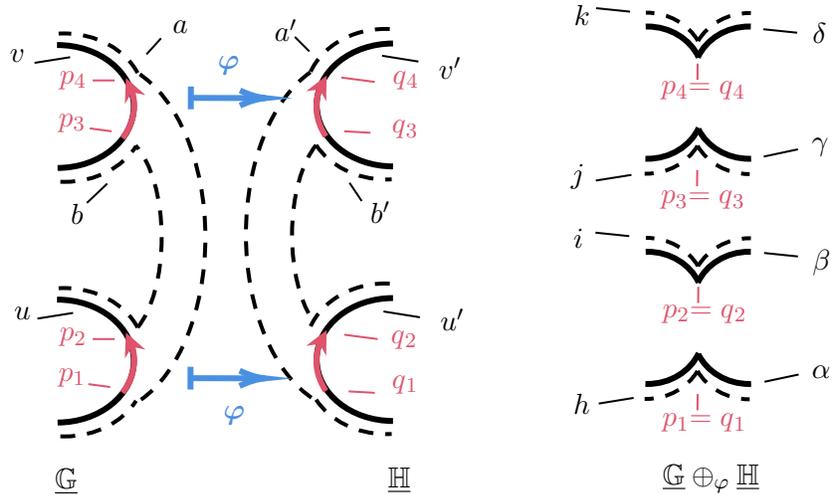}
\caption{Naming conventions used in Definition~\ref{d.big2sum}.}
\label{f.dog1}
\end{figure}

\begin{figure}
\centering
\input{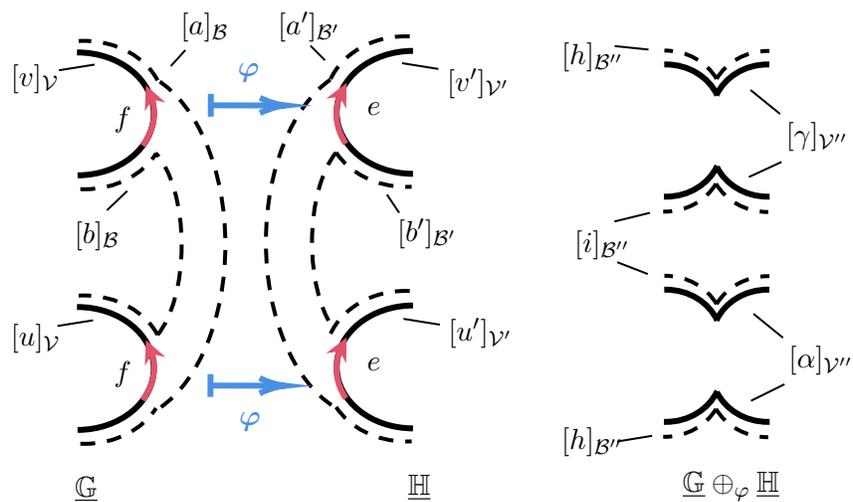}

\caption{Forming the 2-sum of packaged arrow presentations.}
\label{f.dog2}
\end{figure}

\begin{figure}
    \centering
    \begin{tabular}{cc}
      \input{Diagrams/ExPack2Sum1}  &  \input{Diagrams/ExPack2Sum2}
    \end{tabular}
    \caption{Two packaged arrow presentations, $\pack{G}$ and $\pack{H}$, with a coupling $\varphi$, and the 2-sum $\pack{G}\oplus_{\varphi} \pack{H}$.}
    \label{expack2sum}
\end{figure}

\begin{definition}\label{d.big2sum}
Let $\pack{G} = (\ar{G},\V,\B)$ and $\pack{H}=(\ar{H},\V',\B')$ be packaged arrow presentations,  $f$ be an edge of $\ar{G}$, $e$ be an edge of  $\ar{H}$ and $\varphi$ be a coupling of $f$ and $e$. 
Further suppose $\varphi$ sends the arrow $\overrightarrow{p_1p_2}$ to the arrow $\overrightarrow{q_1q_2}$, and sends the arrow $\overrightarrow{p_3p_4}$ to  the arrow $\overrightarrow{q_3q_4}$.
\begin{itemize}
\item In $\ar{G}$ let 
$u$ denote the vertex containing $\overrightarrow{p_1p_2}$; 
$v$ denote the vertex in containing $\overrightarrow{p_3p_4}$;  
$a$ denote the boundary component containing $p_1$ and $p_4$;
and 
$b$ denote the boundary component containing $p_2$ and $p_3$.

\item In $\ar{H}$ let 
$u'$ denote the vertex containing $\overrightarrow{q_1q_2}$; 
$v'$ denote the vertex in containing $\overrightarrow{q_3q_4}$;  
$a'$ denote the boundary component containing $q_1$ and $q_4$;
and 
$b'$ denote the boundary component containing $q_2$ and $q_3$.

\item In $\ar{G}\oplus_{\varphi} \ar{H}$ let 
$\alpha$ denote the vertex, and $h$ the boundary component, containing $p_1=q_1$; 
$\beta$ denote the vertex, and $i$ the boundary component, containing $p_2=q_2$; 
$\gamma$ denote the vertex, and $j$ the boundary component, containing $p_3=q_3$; 
and $\delta$ denote the vertex, and $k$ the boundary component, containing $p_4=q_4$. 
\end{itemize}

Then the \emph{2-sum} of $\pack{G}$ and $\pack{H}$ with respect to $\varphi$, denoted $\pack{G}\oplus_{\varphi} \pack{H}$ is the packaged arrow presentation $(\ar{G}\oplus_{\varphi} \ar{H}, \V'',\B'')$ where 
\begin{multline*}
\V'' = (\V \cup \V'-\{ [u]_{\V},[v]_{\V},[u']_{\V'} , [v']_{\V'}\}) 
\\
\cup\{  [u]_{\V} \cup [u']_{\V'} \cup \{\alpha,\beta\} -\{u,u'\}  \}
\\\cup\{  [v]_{\V} \cup [v']_{\V'} \cup \{\gamma,\delta\} -\{v,v'\}  \}
 ,\end{multline*}
and 
\begin{multline*}
\B'' = (\B \cup \B'-\{ [a]_{\B},[b]_{\B},[a']_{\B'} , [b']_{\B'}\}) 
\\
\cup\{  [a]_{\B} \cup [a']_{\B'} \cup \{h,k\} -\{a,a'\}  \}
\\\cup\{  [b]_{\B} \cup [b']_{\B'} \cup \{i,j\} -\{b,b'\}  \}
 .
 \end{multline*}
\end{definition}

The following two results extend Lemmas~\ref{lem:sumord} and~\ref{lem:sumop}. As  verifying the results is straightforward but  lengthy, we omit their proofs. 
\begin{lemma}
   Let $\pack{G} = (\ar{G},\V,\B)$, $\pack{H}=(\ar{H},\V',\B')$ and $\pack{K}=(\ar{K},\V'',\B'')$  be packaged arrow presentations, $f$ be an edge of $\ar{G}$, $e$ and $g$ be distinct edges of $\ar{H}$, and $h$ be an edge of $\ar{K}$.  
    In addition let $\varphi_{f,e}$ be a coupling of $f$ and $e$, and $\varphi_{g,h}$ be a coupling of $g$ and $h$. Then  \[
\pack{G} \oplus_{\varphi_{f,e}} \pack{H} = \pack{H} \oplus_{\varphi^{-1}_{f,e}} \pack{G}
\qquad \text{and} \qquad 
\pack{G} \oplus_{\varphi_{f,e}} (\pack{H} \oplus_{\varphi_{g,h}} \pack{K})  = (\pack{G} \oplus_{\varphi_{f,e}} \pack{H} )\oplus_{\varphi_{g,h}} \pack{K} .\] 
\end{lemma}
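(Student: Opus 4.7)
The plan is to reduce both equalities to their counterparts for the underlying arrow presentations, which are already given by Lemma~\ref{lem:sumord}, and then verify that the induced vertex and boundary partitions on each side agree block-for-block. Since operations like the 2-sum act locally on a small set of named vertices and boundary components, it suffices to track the fate of these named elements through each composition. Concretely, on both sides of each equation the underlying arrow presentations are equal by Lemma~\ref{lem:sumord}, so I only have to check that $\V''$ and $\B''$, as defined in Definition~\ref{d.big2sum}, produce the same partition on each side.

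For the commutativity statement I would first observe that switching the roles of $\pack{G}$ and $\pack{H}$ and simultaneously inverting $\varphi$ sends the named objects $u,v,a,b$ of $\pack{G}$ to the named objects $u',v',a',b'$ of $\pack{H}$ and vice versa; the points $p_i$ and $q_i$ also swap but the identified points $p_i=q_i$ are unchanged, so the four distinguished vertices $\alpha,\beta,\gamma,\delta$ and the four distinguished boundary components $h,i,j,k$ in the 2-sum are the same in both constructions. Since the expressions defining the merged vertex classes (such as $[u]_{\V}\cup[u']_{\V'}\cup\{\alpha,\beta\}-\{u,u'\}$) are manifestly symmetric under this swap, and likewise for the boundary classes, both partitions coincide.

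For associativity the key is that the 2-sums in question are performed along disjoint pairs of edges, so the local data near $f,e$ and near $g,h$ do not interact. First I would fix notation for the distinguished vertices and boundary components of each of the three packaged arrow presentations, and for the new vertices $\alpha,\beta,\gamma,\delta$ (and boundary components) produced by each of the two 2-sums; these are unambiguously determined by the endpoints of the arrows involved. Then for a vertex class arising in the iterated 2-sum one checks four cases according to whether the class originally lay in $\pack{G}$, $\pack{H}$ (and whether it contained $u'$, $v'$, $u''$ or $v''$, where the latter are the vertices of $\pack{H}$ incident to $g$), or $\pack{K}$, and in each case sees that the two orders of bracketing produce exactly the same union. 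The same argument, with boundary components replacing vertices, handles $\B''$.

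The main obstacle is purely notational bookkeeping: Definition~\ref{d.big2sum} introduces a large number of labelled objects, and in the associativity statement one has three packaged arrow presentations producing up to twelve distinguished vertices and twelve distinguished boundary components, some of which may coincide in degenerate cases (e.g.\ when $f$ or $e$ is a loop, or when a vertex class already contains both endpoints of the contracting edge). Handling these coincidences cleanly is the only subtle point; doing so in a disciplined case analysis, and appealing throughout to Lemma~\ref{lem:sumord} for the underlying arrow presentation equalities, completes the argument. For brevity I would record the verification as a straightforward but lengthy calculation, exactly as the authors do.
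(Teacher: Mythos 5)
Your plan is the intended argument: the paper itself omits the proof as ``straightforward but lengthy,'' and the verification it has in mind is exactly what you describe, namely invoking Lemma~\ref{lem:sumord} for the underlying arrow presentations and then checking block-by-block that the partitions of Definition~\ref{d.big2sum} agree under the swap (for commutativity) and under the two bracketings (for associativity). Your identification of the degenerate coincidences among the distinguished vertices and boundary components as the only delicate point is accurate, and nothing in your outline would fail when those cases are worked through.
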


\begin{lemma}
 Let $\pack{G}$ and $\pack{H}$ be packaged arrow presentations, $f$ be an edge of $\pack{G}$, and $e$ and $g$ be distinct edges of $\pack{H}$. Additionally let $\varphi$ be a coupling of $f$ and $e$. Then the following hold. 
    \begin{enumerate}
        \item $(\pack{G}\oplus_{\varphi}\pack{H})\ba g = \pack{G}\oplus_{\varphi}(\pack{H}\ba g) $.
        
        \item $(\pack{G}\oplus_{\varphi}\pack{H})\con g = \pack{G}\oplus_{\varphi}(\pack{H}\con g) $.
        
        \item $(\pack{G}\oplus_{\varphi}\pack{H})\pcon g = \pack{G}\oplus_{\varphi}(\pack{H}\pcon g) $.
        
        \item $(\pack{G}\oplus_{\varphi}\pack{H})\mba g = \pack{G}\oplus_{\varphi}(\pack{H}\mba g) $.
        
        \item $(\pack{G}\oplus_{\varphi}\pack{H})\mcon g = \pack{G}\oplus_{\varphi}(\pack{H}\mcon g) $.
    \end{enumerate}
\end{lemma}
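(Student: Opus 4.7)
The plan is to reduce the five equalities to statements purely about vertex and boundary partitions. For each operation $\square \in \{\ba,\con,\pcon\}$, Lemma~\ref{lem:sumop} already gives $(\ar{G}\oplus_{\varphi}\ar{H})\square g = \ar{G}\oplus_{\varphi}(\ar{H}\square g)$ at the level of underlying arrow presentations. The two remaining operations $\mba$ and $\mcon$ act on the underlying arrow presentation exactly as $\ba$ and $\con$ do, so the same identity holds for them as well. Thus both sides of every equation have the same underlying arrow presentation, and it suffices to verify that the vertex partition and boundary partition of the left-hand side equal those of the right-hand side.

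I would then fix notation as in Definition~\ref{d.big2sum}: let $u,v$ and $a,b$ denote the vertices and boundary components of $\ar{G}$ incident to $f$; let $u',v'$ and $a',b'$ denote those of $\ar{H}$ incident to $e$; and introduce $u'',v''$ and $a'',b''$ for the vertices and boundary components of $\ar{H}$ incident to $g$. Since $g\neq e$, the arrows constituting $g$ are disjoint from those constituting $e$, so the local modifications defining $\square g$ take place away from the 2-sum interface. However, the classes $[u'']_{\V'}, [v'']_{\V'}, [a'']_{\B'}, [b'']_{\B'}$ may or may not coincide with $[u']_{\V'}, [v']_{\V'}, [a']_{\B'}, [b']_{\B'}$, and the vertices $u'',v''$ and boundary components $a'',b''$ themselves may or may not be among $\{u',v'\}$ or $\{a',b'\}$.

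The key observation is that the constructions of $\V''$, $\B''$ appearing in Definition~\ref{d.big2sum} and the constructions of the partitions after applying $\square g$ both have the form ``remove some blocks and re-insert their union (possibly with extra vertices or boundary components added and some removed).'' Because union of sets is associative and commutative, these two kinds of adjustments commute, so for each operation writing out both sides explicitly yields the same partition. For instance, for $\ba g$, the new boundary class on the right-hand side, $[a'']_{\B'}\cup[b'']_{\B'}\cup S - \{a'',b''\}$, enters into the 2-sum's amalgamated boundary block exactly whenever one of $[a'']_{\B'}, [b'']_{\B'}$ equals $[a']_{\B'}$ or $[b']_{\B'}$; and on the left-hand side the same amalgamation happens first, with the deletion then replacing $a''$ and $b''$ by $S$ inside whichever class they end up in. Analogous bookkeeping handles the other four operations.

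The main obstacle is purely combinatorial: one must check every case of how the blocks $[u'']_{\V'}, [v'']_{\V'}, [a'']_{\B'}, [b'']_{\B'}$ relate to $[u']_{\V'}, [v']_{\V'}, [a']_{\B'}, [b']_{\B'}$, and in each case confirm that the two composite partition operations produce the same result. There is no mathematical subtlety beyond this, but the number of cases is what leads the authors to state that verifying the lemma is ``straightforward but lengthy.'' I would organise the verification by handling the vertex partition and boundary partition separately for each of the five operations, since the two partitions are modified independently in each construction, and within each case appeal only to the commutativity and associativity of set union together with the elementary identifications of vertices and boundary components created by the edge operations.
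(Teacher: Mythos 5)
The paper offers no proof of this lemma --- the authors explicitly write that ``verifying the results is straightforward but lengthy'' and omit the details --- so there is nothing concrete to compare your argument against. Your strategy is the intended one: the underlying arrow presentations agree by Lemma~\ref{lem:sumop} (noting that $\mba$ and $\mcon$ act on the arrow presentation as $\ba$ and $\con$ do), and what remains is to check that the vertex and boundary partitions produced by the two orders of operations coincide. That reduction is correct, and your observation that the two constructions are both of the form ``delete some blocks, re-insert their union with some elements added or removed'' is the right organising principle.

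Two cautions. First, your remark that the modifications defining $\square\, g$ ``take place away from the 2-sum interface'' is true only for vertices and arrows; boundary components are global, so deleting or Penrose-contracting $g$ can destroy the very boundary components $a'$, $b'$ of $\pack{H}$ through which Definition~\ref{d.big2sum} builds the amalgamated boundary classes, replacing them by the components in $S$. You do flag that $a''$, $b''$ may coincide with $a'$, $b'$, but this is exactly the case in which one must also re-identify which boundary component of $\ar{H}\,\square\, g$ plays the role of $a'$ (namely, the component of $S$ containing $q_1$ and $q_4$), and verify that it lands in the block predicted by performing the 2-sum first. Second, the proposal as written is a plan rather than a proof: the case analysis over how $[u'']$, $[v'']$, $[a'']$, $[b'']$ relate to $[u']$, $[v']$, $[a']$, $[b']$ is named but not executed, and for this lemma that analysis is the entire content. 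As an outline it is sound and matches what the authors intend; as a proof it is incomplete in the same way the paper's own treatment is.
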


\begin{figure}
    \centering
    \input{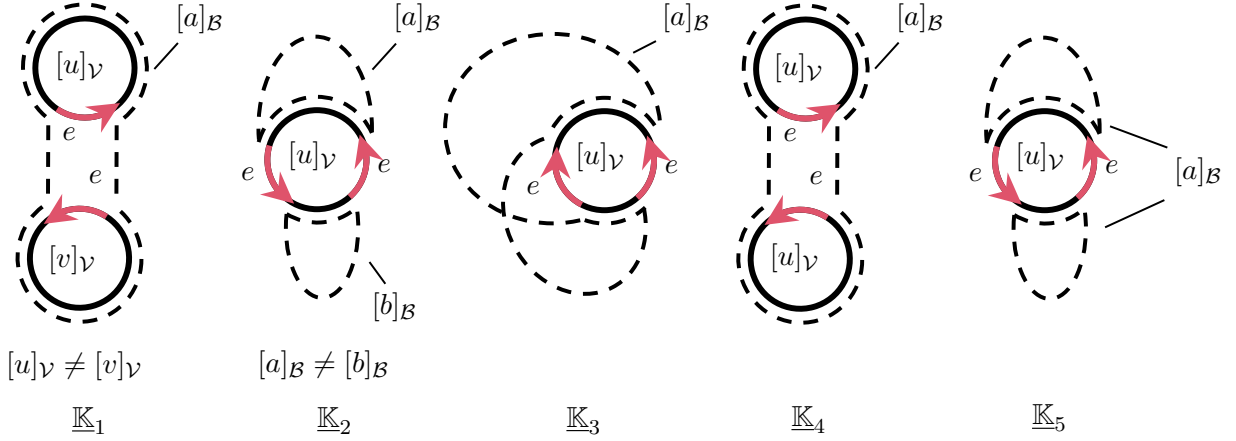}
    \caption{The five packaged arrow presentations with one edge and no isolated vertices. Here $[u]_{\V}\neq [v]_{\V}$ and $[a]_{\B}\neq [b]_{\B}$.}
    \label{fig:kgrph}
\end{figure}

Each of the five operations on edges in a packaged arrow presentation (deletion, contraction, Penrose-contraction, merge-deletion, and merge-contraction) can be realized by forming the 2-sum with one of the five distinct packaged arrow presentations on one edge.
\begin{lemma}\label{lem:2sumcon}
 Let $\pack{G}$ be a packaged arrow presentation with an edge $f$, and let $\pack{K}_1$--\;$\pack{K}_5$ be the packaged arrow presentations given in Figure~\ref{fig:kgrph}.    Then 
\begin{enumerate}
\item $\pack{G}\oplus_{\varphi} \pack{K}_1=\pack{G} \ba f$,
\item $\pack{G}\oplus_{\varphi}\pack{K}_2 =\pack{G} \con f$,
\item $\pack{G}\oplus_{\varphi}\pack{K}_3 =\pack{G} \pcon f$,
\item $\pack{G}\oplus_{\varphi}\pack{K}_4 =\pack{G} \mba f$,
\item\label{lem:2sumcon1} $\pack{G}\oplus_{\varphi}\pack{K}_5 =\pack{G} \mcon f$,
\end{enumerate}
here $\varphi$ is any coupling of $f$ and  $e$, where $e$ is the singleton edge in the appropriate  $\pack{K}_i$.  
\end{lemma}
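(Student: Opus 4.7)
The plan is to verify each of the five equalities directly from Definitions~\ref{d.big2sum} and the operation definitions in Subsection~\ref{ss.dap}. I will split the verification into two independent pieces: (i) the equality of the underlying arrow presentations, and (ii) the equality of the vertex and boundary partitions. Since the 2-sum is a local operation (it modifies $\pack{G}$ only near the arrows of $f$), and each $\pack{K}_i$ consists of a single edge $e$ together with a fixed local partition structure, both checks reduce to finite, localized comparisons.

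For (i), fix a coupling $\varphi$ of $f$ and $e$. In each $\pack{K}_i$ the edge $e$ is either a bridge or a loop, and its arrows are arranged (with respect to the unique vertex or pair of vertices, and the boundary components of $\ar{K}_i$) in precisely one of five ways. Starting from the formula in Definition~\ref{def:ar2sum}, I would read off what happens to the four arc endpoints $p_1,\dots,p_4$ of $f$ after deleting the arrows of $f$ and $e$, removing the interiors of the vertex arcs carrying them, and identifying $p_i\eq q_i$. In each of the five cases the re-gluing reproduces exactly the local modification made by one of deletion, contraction, Penrose-contraction, merge-deletion, or merge-contraction of $f$ in $\ar{G}$. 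This step is a direct inspection of the five pictures in Figure~\ref{fig:kgrph} and can be presented as a short table.

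For (ii), I would unfold the formulas for $\V''$ and $\B''$ from Definition~\ref{d.big2sum} using the partition data of $\pack{K}_i$ (which is trivial: $\V'$ and $\B'$ consist of one or two singletons, according to the case) and then compare with the formulas for $\V',\V'',\V'''$ and $\B',\B'',\B'''$ from Subsection~\ref{ss.dap}. The key bookkeeping observation is the correspondence between the vertices $\alpha,\beta,\gamma,\delta$ and boundary components $h,i,j,k$ created by the 2-sum, and the vertices $T$ and boundary components $S$ created by the operation: in each case $\pack{K}_i$ contributes no vertices other than those identified with $\alpha,\beta,\gamma,\delta$ and no boundary components other than those identified with $h,i,j,k$, so the union formulas collapse to exactly the union formulas that appear in the operation definition. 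Whether the relevant classes $[u']_{\V'},[v']_{\V'}$ (respectively $[a']_{\B'},[b']_{\B'}$) are equal in $\pack{K}_i$ is precisely what distinguishes the five cases and dictates which of delete, merge-delete, contract, merge-contract, or Penrose-contract is produced.

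The main obstacle is not conceptual but notational: the five-case breakdown interacts with the possibility that $[u]_{\V}\eq[v]_{\V}$ or $[a]_{\B}\eq[b]_{\B}$ already in $\pack{G}$, and the set-theoretic identities of the form $X\cup(Y\cup\{\alpha,\beta\}-\{u,u'\})=X'\cup\{\alpha,\beta\}$ need to be applied carefully. I would handle this by first proving two small set-theoretic lemmas (one for vertex classes, one for boundary classes) that cover all the partition manipulations uniformly, and then applying them five times. Given Lemma~\ref{l.cmmt} and the local nature of the 2-sum, no further compatibility issues arise, and the five equalities follow.
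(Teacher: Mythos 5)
Your proposal is correct and follows essentially the same route as the paper, which proves the lemma by directly comparing the packaged arrow presentations on the two sides of each equation (illustrating one case in a figure and omitting the rest). Your split into the arrow-presentation check and the partition check, with the bookkeeping of the created vertices and boundary components against $\alpha,\beta,\gamma,\delta$ and $h,i,j,k$, is exactly the verification the paper leaves to the reader.
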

\begin{proof}
The result can be verified by comparing the resulting packaged arrow presentations on the left and right hand sides of each equation. Figure~\ref{fig:k5} illustrates this for Item~\ref{lem:2sumcon1}.
The remaining cases are verified similarly and we omit the details.
\end{proof}

\begin{figure}[t!]
    \centering
    \begin{tabular}{ccc} \vspace{2.5mm}
     \input{Diagrams/G+K5.1} &~\qquad~& \tikzset{every picture/.style={line width=0.75pt}} 

\begin{tikzpicture}[x=0.5pt,y=0.5pt,yscale=-1,xscale=1]

\draw    (231,89.6) -- (259,116.6) ;
\draw    (238,52.6) -- (268,24.6) ;
\draw    (285,208.6) -- (285,224.6) ;
\draw  [draw opacity=0][line width=2.25]  (278.42,32.07) .. controls (279.02,32.06) and (279.62,32.05) .. (280.21,32.05) .. controls (330.91,32.05) and (372,90.26) .. (372,162.06) .. controls (372,233.86) and (330.91,292.07) .. (280.21,292.07) .. controls (279.46,292.07) and (278.7,292.05) .. (277.95,292.03) -- (280.21,162.06) -- cycle ; \draw  [line width=2.25]  (278.42,32.07) .. controls (279.02,32.06) and (279.62,32.05) .. (280.21,32.05) .. controls (330.91,32.05) and (372,90.26) .. (372,162.06) .. controls (372,233.86) and (330.91,292.07) .. (280.21,292.07) .. controls (279.46,292.07) and (278.7,292.05) .. (277.95,292.03) ;  
\draw  [draw opacity=0][line width=2.25]  (268.73,113.55) .. controls (291.09,113.58) and (309.21,133.88) .. (309.21,158.91) .. controls (309.21,183.96) and (291.06,204.27) .. (268.68,204.27) .. controls (268.42,204.27) and (268.16,204.26) .. (267.89,204.26) -- (268.68,158.91) -- cycle ; \draw  [line width=2.25]  (268.73,113.55) .. controls (291.09,113.58) and (309.21,133.88) .. (309.21,158.91) .. controls (309.21,183.96) and (291.06,204.27) .. (268.68,204.27) .. controls (268.42,204.27) and (268.16,204.26) .. (267.89,204.26) ;  
\draw  [draw opacity=0][dash pattern={on 5.63pt off 4.5pt}][line width=1.5]  (268.98,196.21) .. controls (287.12,196.03) and (301.79,179.4) .. (301.79,158.91) .. controls (301.79,138.31) and (286.97,121.61) .. (268.68,121.61) .. controls (268.46,121.61) and (268.23,121.61) .. (268,121.61) -- (268.68,158.91) -- cycle ; \draw [color={rgb, 255:red, 0; green, 0; blue, 0 }  ,draw opacity=1 ][dash pattern={on 5.63pt off 4.5pt}][line width=1.5]  [dash pattern={on 5.63pt off 4.5pt}]  (268.98,196.21) .. controls (287.12,196.03) and (301.79,179.4) .. (301.79,158.91) .. controls (301.79,138.31) and (286.97,121.61) .. (268.68,121.61) .. controls (268.46,121.61) and (268.23,121.61) .. (268,121.61) ;  
\draw  [draw opacity=0][dash pattern={on 5.63pt off 4.5pt}][line width=1.5]  (275.84,301.36) .. controls (276.87,301.4) and (277.91,301.42) .. (278.95,301.42) .. controls (339.15,301.42) and (387.95,238.92) .. (387.95,161.82) .. controls (387.95,84.72) and (339.15,22.22) .. (278.95,22.22) .. controls (277.89,22.22) and (276.82,22.24) .. (275.76,22.28) -- (278.95,161.82) -- cycle ; \draw [color={rgb, 255:red, 0; green, 0; blue, 0 }  ,draw opacity=1 ][dash pattern={on 5.63pt off 4.5pt}][line width=1.5]  [dash pattern={on 5.63pt off 4.5pt}]  (275.84,301.36) .. controls (276.87,301.4) and (277.91,301.42) .. (278.95,301.42) .. controls (339.15,301.42) and (387.95,238.92) .. (387.95,161.82) .. controls (387.95,84.72) and (339.15,22.22) .. (278.95,22.22) .. controls (277.89,22.22) and (276.82,22.24) .. (275.76,22.28) ;  
\draw    (310.63,232.05) -- (353,219.6) ;

\draw (270.9,232.1) node [anchor=north west][inner sep=0.75pt]    {$[ x]_{\mathcal{V} ''}$};
\draw (212.65,59.4) node [anchor=north west][inner sep=0.75pt]    {$[ d]_{\mathcal{B} ''}$};

\end{tikzpicture} \\ 
        $\pack{G}$, $\pack{K}_5$ and $\varphi$ & &$\pack{G}\oplus_{\varphi}\pack{K}_5$ or $\pack{G} \mcon f$
    \end{tabular}
    \caption{Showing $\pack{G}\oplus_{\varphi}\pack{K}_5 =\pack{G} \mcon f$. }
    \label{fig:k5}
\end{figure}

\medskip

\begin{definition}\label{def:fulltens}
    Let $\pack{G}=(\ar{G},\V,\B)$ be a packaged arrow presentation and $\{\pack{H}^{(f)}\}_{f\in E(\pack{G})}$ be a family of packaged arrow presentations $\pack{H}^{(f)}=(\ar{H}^{(f)},\V^{(f)},\B^{(f)})$ indexed by the edges of $\ar{G}$. All the arrow presentations here are distinct. Further suppose that for each edge $f$ of $\ar{G}$ there is a coupling $\varphi_f$ of $f$ with an edge $e^{(f)}$ of $\ar{H}^{(f)}$. Let $\bphi = \{\varphi_f\}_{f\in E(\pack{G})}$. Then the \emph{tensor product} is the packaged arrow presentation

   \[\pack{G}\ot_{\bphi}\{\pack{H}^{(f)}\}_{f\in E(\pack{G})}
=\pack{G}\bigoplus_{\substack{\varphi_{f} \\f\in E(\pack{G})}}
\{\pack{H}^{(f)}\}_{f\in E(\pack{G})}\]
\end{definition}

\begin{definition}\label{def:littletens}
Let $\pack{G}$ be and $\pack{H}$ be packaged arrow presentations. Let $e$ be a fixed edge of $\pack{H}$ and for each edge $f$ of $\pack{G}$ let $\varphi_f$ be a coupling of $f$ and $e$, and $\bphi = \{\varphi_f\}_{f\in E(\pack{G})}$. Then 
\[  \pack{G}\ot_{\bphi} \pack{H} =  \ar{G} \ot_{\boldsymbol{\psi}} \{\pack{H}^{(f)}\}_{f\in E(\pack{G})},  \]
where each $\pack{H}^{(f)}$ is a packaged arrow presentation equivalent to $\pack{H}$ (all the copies are distinct); $e^{(f)}$ is the edge in $\pack{H}^{(f)}$ corresponding to $e$;  $\psi_f$ is the coupling of $f$ and $e^{(f)}$ induced from $\varphi_f$ under the equivalence; and  $\boldsymbol{\psi} =\{\psi_f\}_{f\in E(\pack{G})}$.
\end{definition}

\subsection{Vertex partitioned arrow presentations}

In Section~\ref{ss:brp} we consider the Bollob\'as--Riordan polynomial. For this we need to make use of vertex partitioned arrow presentations. A \emph{vertex partitioned arrow presentation}  $\vp{G}$ is a pair $(\ar{G}, \V)$ where $\ar{G}$ is an arrow presentation and $\V$ a partition of its vertex set. 
We note that as arrow presentations correspond to ribbon graphs, vertex partitioned arrow presentation correspond to Krajewski, Moffatt, and Tanasa's vertex partitioned ribbon graphs from~\cite[Section~4.7]{KMT}. 

A vertex partitioned arrow presentation $(\ar{G}, \V)$ can be regarded as a packaged arrow presentation $(\ar{G}, \V, \{B(G)\})$ in which all boundary components are in the same block and vice versa. With this we define versions of all the above packaged arrow presentation terms and operations (such as, deletion, contraction, 2-sums etc.) as those induced from packaged arrow presentations. Alternatively, just remove all mention of boundary partitions in the previous sections.

\section{Tensor product formulas for topological Tutte polynomials}\label{s.tpf}

\subsection{The Krushkal polynomial}\label{ss:kp}

\subsubsection{A review of the Krushkal polynomial}
 The {\em Krushkal polynomial} was introduced by Krushkal in \cite{krushkalpoly} for graphs in orientable surfaces,  and extended  to graphs in non-orientable surfaces by Butler in \cite{zbMATH06824436}. It is a  four-variable polynomial that we denote here by $K(G\subset \Sigma ; x,y,a,b )$ where  $G\subset \Sigma$ is a graph $G$ embedded in a closed surface $\Sigma$ (but not necessarily cellularly embedded).

Our aim is to extend Brylawski's tensor product formula  from the Tutte polynomial to the Krushkal polynomial. A difficulty with the Krushkal polynomial, as originally defined is that it has no known deletion-contraction relations that apply to an arbitrary edge. 
(A deletion-contraction relation here is a linear relation between the value of the polynomial $G\subset \Sigma$ and its two values after deleting and contraction one of its edges. It provides a recursive way to compute the polynomial.)

By extending the Krushkal polynomial to vertex partitioned graphs in surfaces, Krajewski, Moffatt and Tanasa, in~\cite{KMT}, found deletion-contraction relations that apply to any edge in the graph. This results a way to compute the Krushkal polynomial recursively with the base case consisting of its value on edgeless graphs. A more combinatorial approach to this work was taken by Huggett and Moffatt in~\cite{HM} who defined a polynomial $T_{ps}$ by framing it  in terms of both graphs embedded in pseudo-surfaces (i.e., surfaces with pinch-points), and in terms of ribbon graphs whose vertex set and set of boundary components are partitioned. 
Via Remark~\ref{r.papcrg}, in turn this can be expressed in terms of packaged arrow presentations resulting in a polynomial,  $T_{ps}(\pack{G};w,x,y,z)$. (As we do not need the details, we omit the definition of $T_{ps}$ which can be found in \cite[Definition~28]{HM}.)

\medskip 

We use an alternative form of $T_{ps}$ that has a simple deletion-contraction relation.
For a packaged arrow presentation $\pack{G}=(\ar{G},\V,\B)$  we let $Z(\pack{G};a,b,\al,\be,\ga)$ be the polynomial uniquely defined by
\begin{equation}\label{eq.arz}
Z(\pack{G};a,b,\al,\be,\ga)
=\begin{cases}
a\,Z(\pack{G}\ba e;a,b,\al,\be,\ga)+b\,Z(\pack{G}\con e;a,b,\al,\be,\ga)
\quad\text{for any edge $e$,}
\\ \al^{|V(\ar{G})|}\be^{|\V|}\ga^{|\B|} 
\quad \text{if $\ar{G}$ is edgeless. }
\end{cases}
\end{equation}
It follows from~\cite[Theorem~24]{HM} that $Z(\pack{G};a,b,\al,\be,\ga)$ is well-defined. (As taking $a_i=a$, $b_i=b$, $\al=\be$, $\be=\ga$, and $\ga = \al$ in~\cite[Theorem~24]{HM} gives~\eqref{eq.arz}.) The polynomial
$Z(\pack{G};a,b,\al,\be,\ga)$ can be written 
in terms of $T_{ps}$ by~\cite[Theorem~29]{HM}. 
An application of~\cite[Corollary~42]{HM} then shows that the Krushkal polynomial can be recovered from $Z(\pack{G};a,b,\al,\be,\ga)$.

\begin{remark}
For reference we include the specific relations between the above polynomials, although we do not use the details here.
There are various normalizations of the Krushkal polynomial in the literature. We use the following one.
 For an embedded graph $G\subset \Sigma$, 
\begin{equation}\label{d.kru}
 K(G\subset \Sigma ; x,y,a,b ) =  \sum_{A\subseteq E}  x^{r(G)-r(A)} y^{ \kappa(A)} a^{ \frac{1}{2}s(A)} b^{ \frac{1}{2} s^{\perp}(A)}.
   \end{equation}
 Using $N(X)$ to denote the surface with boundary defined by neighbourhood of the given subsets $X$ of $\Sigma$, and $\gamma( N(X))$ its Euler genus, here
  $s(A):= \gamma(N(V \cup A))$,  $s^{\perp}(A) := \gamma(\Sigma \backslash N(V \cup A))$, and 
 $\kappa(A)= \#\mathrm{components}(\Sigma  \backslash N(V \cup  A))  - \#\mathrm{components}(\Sigma ).$
Note that we use here the form of the exponent of $y$ from the proof of Lemma~4.1 of~\cite{zbMATH06127547}  rather than the homological definition given in \cite{krushkalpoly}. 

For reference, $T_{ps} \left(\pack{G} ; w,x,y,z\right)$ here is exactly that stated in~\cite[Definition 28]{HM} and we use the notation defined there but translated from ribbon graphs to arrow presentations.
By Theorem~\cite[Theorem~29]{HM}, 
\begin{multline}\label{eq.hdy}
Z(\pack{G}; a, b, \al,\be,\ga)
=
\be^{k(\ar{G}/\V)} \ga^{k(\ar{G}^*/\B)}\al^{v(\ar{G})-\rho(\ar{G})}
b^{\rho(\ar{G})} 
a^{|E|-\rho(\ar{G})}
 \\ 
 T_{ps} \left(\pack{G} ;  \frac{\al\be a}{b} , \frac{\al a}{b}, \frac{\al\ga b}{a},\frac{\al b}{a}\right).
\end{multline}
On the other hand $T_{ps}$ can be obtained from $Z$ as follows. 
\begin{multline}\label{eq.ztps2}
 T_{ps} \left(\pack{G} ; w,x,y,z\right)
=
\left(\frac{x}{w}\right)^{k(\ar{G}/\V)} \left(\frac{z}{y}\right)^{k(\ar{G}^*/\B)}\left(\frac{1}{\sqrt{xz}}\right)^{v(\ar{G})-\rho(\ar{G})}
(\sqrt{x})^{\rho(\ar{G})} 
(\sqrt{z})^{|E|-\rho(\ar{G})}\\
Z\left(\pack{G}; \frac{1}{\sqrt{z}}, \frac{1}{\sqrt{x}}, \sqrt{xz}, \frac{w}{x},\frac{y}{z}\right) .
\end{multline}
An application of~\cite[Corollary~42]{HM} then relates  $Z(\pack{G};a,b,\al,\be,\ga)$ to the Krushkal polynomial. If $\pack{G}$ is a packaged arrow presentation in which every vertex partition contains a unique element, and $G\subset \Sigma$ is its corresponding graph embedded in a surface constructed as in~\cite[Section~2.3]{HM} then
\begin{multline}\label{eq.zkr}
 K \left(G\subset \Sigma ; x,y,a,b\right) 
 =
 \left(ax\right)^{-k(\ar{G}/\V)} \left(by\right)^{-k(\ar{G}^*/\B)}\left(\sqrt{ab}\right)^{v(\ar{G})-\rho(\ar{G})}
\left(\sqrt{a}\right)^{-\rho(\ar{G})} \\
\left(\sqrt{b}\right)^{\rho(\ar{G})-|E|+\ga(\ar{G})}
a^{\rho(\ar{G})-r(\ar{G}/\V)}
 Z(\pack{G}; \sqrt{b}, \sqrt{a}, 1/\sqrt{ab}, ax,by).
\end{multline}
\end{remark}

\subsubsection{The tensor product formula}

Our aim is to extend Brylawski's tensor product formula, stated here in Equation~\eqref{btf}, to $Z(\pack{G})$. 
To do so we need to consider a generalisation $Q(\pack{G})$ of $Z(\pack{G})$. The reason for this is detailed in Remark~\ref{r.whyq}. The generalisation is defined via the following proposition.

\begin{proposition}\label{p.qdef}
Let $\pack{G}=(\ar{G},\V,\B)$ be a packaged arrow presentation. 
There is a unique map $Q(\pack{G})=Q(\pack{G}; a,b,c,x,y,\al,\be,\ga)$ from packaged arrow presentations to $\mathbb{Z}[a,b,c,x,y,\al,\be,\ga]$ such that
\begin{equation}
Q(\pack{G})
=\begin{cases}\label{eq:Q}
aQ(\pack{G}\ba e)+bQ(\pack{G}\con e)+cQ(\pack{G}\pcon e)+xQ(\pack{G}\mba e)+yQ(\pack{G}\mcon e), 
\\\hfill \qquad
\text{for any edge $e$;}
\\ \al^{|V(\ar{G})|}\be^{|\V|}\ga^{|\B|}, 
\qquad \text{if $\ar{G}$ is edgeless.} 
\end{cases}
\end{equation}
\end{proposition}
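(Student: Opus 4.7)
The plan is to prove existence via a state-sum formula and uniqueness by induction on the number of edges, with both relying crucially on the commutativity of the five edge operations (Lemma~\ref{l.cmmt}).

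For existence, let $\mathcal{O} := \{\ba, \con, \pcon, \mba, \mcon\}$ and assign coefficients $c(\ba) = a$, $c(\con) = b$, $c(\pcon) = c$, $c(\mba) = x$, $c(\mcon) = y$. For each function $\sigma\colon E(\pack{G}) \to \mathcal{O}$, let $\pack{G}^\sigma$ denote the packaged arrow presentation obtained from $\pack{G}$ by applying the operation $\sigma(e)$ to every edge $e$. By Lemma~\ref{l.cmmt} these operations on distinct edges commute pairwise, so $\pack{G}^\sigma$ is well-defined independent of the order in which the operations are carried out; moreover, every edge has been removed, so $\pack{G}^\sigma$ is edgeless. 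Define
\[
Q(\pack{G}) := \sum_{\sigma\colon E(\pack{G})\to \mathcal{O}} \Bigl(\prod_{e\in E(\pack{G})} c(\sigma(e))\Bigr)\, \al^{|V(\pack{G}^\sigma)|}\be^{|\V^\sigma|}\ga^{|\B^\sigma|},
\]
where $\V^\sigma$ and $\B^\sigma$ denote the vertex and boundary partitions of $\pack{G}^\sigma$.

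Next I would verify that $Q$ satisfies the two conditions of~\eqref{eq:Q}. The edgeless case is immediate, since then the only $\sigma$ is the empty function and its contribution is $\al^{|V(\pack{G})|}\be^{|\V|}\ga^{|\B|}$. For the recursion, fix any edge $e$ and split the sum according to the value $\sigma(e) \in \mathcal{O}$. For each choice $\sigma(e)=\star$, the restriction $\sigma|_{E(\pack{G})-\{e\}}$ ranges over all functions $E(\pack{G}\star e) \to \mathcal{O}$, and by Lemma~\ref{l.cmmt} we have $\pack{G}^\sigma = (\pack{G}\star e)^{\sigma|_{E-\{e\}}}$. Thus the partial sum over $\{\sigma : \sigma(e)=\star\}$ equals $c(\star)\,Q(\pack{G}\star e)$, and summing over $\star\in\mathcal{O}$ yields the five-term recursion.

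Uniqueness is then a straightforward induction on $|E(\pack{G})|$: the edgeless case is pinned down by the second clause of~\eqref{eq:Q}, and for the inductive step, applying the recursion at any chosen edge $e$ expresses $Q(\pack{G})$ in terms of values of $Q$ on packaged arrow presentations with one fewer edge, each of which is determined by the inductive hypothesis. The only subtle point throughout is that the state sum is consistent and that splitting by $\sigma(e)$ interacts correctly with the recursion; both rest entirely on Lemma~\ref{l.cmmt}, so no further obstacle is anticipated.
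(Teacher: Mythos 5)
Your proof is correct and is essentially the argument the paper uses: the paper proves the multivariate generalization (Proposition~\ref{p.qmvdef}) by the same state sum over assignments of one of the five operations to each edge, with well-definedness resting on Lemma~\ref{l.cmmt}, and then deduces Proposition~\ref{p.qdef} as a specialization. Your version merely packages the order-independence slightly more formally (explicit verification of the recursion by splitting on $\sigma(e)$, plus an induction for uniqueness), which is fine.
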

We delay the (routine) proof of this proposition and instead deduce it from a more general one below.

We shall next state our tensor product formulas for $Q(\pack{G}\otimes_{\bphi} \pack{H})$ and $Z(\pack{G}\otimes_{\bphi} \pack{H})$ before considering multivariate versions of them. These two formulas follow immediately from the multivariate extensions, but we state the simpler versions of the results first to help the reader digest the notation. 

\begin{theorem}\label{thm.main}
Let $\pack{G}$ be and $\pack{H}$ be packaged arrow presentations. Let $e$ be a fixed edge of $\pack{H}$ and for each edge $f$ of $\pack{G}$ let $\varphi_f$ be a coupling of $f$ and $e$, and $\bphi = \{\varphi_f\}_{f\in E(\pack{G})}$.  
Then
\begin{equation}\label{eq:main1}
Q(\pack{G}\otimes_{\bphi} \pack{H}; a,b,c,x,y, \al,\be,\ga) = Q(\pack{G}; \phi_{(\eq,2)},\phi_{(\pl,2)},\phi_{\x},\phi_{(\eq,1)},\phi_{(\pl,1)}, \al,\be,\ga)
\end{equation}
where the $\phi$'s are the unique solutions to the  systems of equations given as follows.
\begin{equation}\label{eq.mainmat1} \alpha\beta\gamma \left[ \begin{array}{ccccc}
   \alpha\beta & 1 & 1 & \alpha & 1 \\
    1 & \alpha\gamma & 1 & 1 & \alpha  \\
    1 & 1 & \alpha & 1 & 1 \\
    \alpha & 1 & 1 & \alpha & 1 \\
    1 & \alpha & 1 & 1 & \alpha 
\end{array}\right] 
\cdot \begin{bmatrix}
    \phi_{(\eq,2)} \\[2pt]
    \phi_{(\pl,2)} \\[2pt]
    \phi_{\x}\\[2pt]
    \phi_{(\eq,1)} \\[2pt]
    \phi_{(\pl,1)} 
\end{bmatrix}
=\begin{bmatrix}
    Q(\pack{H}\ba e; a,b,c,x,y ,\al,\be,\ga)\\[2pt]
    Q(\pack{H}\con e; a,b,c,x,y, \al,\be,\ga)\\[2pt]
    Q(\pack{H}\pcon e; a,b,c,x,y,  \al,\be,\ga)\\[2pt]
    Q(\pack{H}\mba e; a,b,c,x,y , \al,\be,\ga)\\[2pt]
    Q(\pack{H}\mcon e; a,b,c,x,y , \al,\be,\ga)
\end{bmatrix}.\end{equation}
\end{theorem}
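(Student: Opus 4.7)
The plan is to follow Brylawski's original strategy, adapted to the five-term recursion satisfied by $Q$. The heart of the argument is a \emph{single-edge lemma}: for any packaged arrow presentation $\pack{G}$ containing an edge $f$ and any coupling $\varphi$ of $f$ with the distinguished edge $e$ of $\pack{H}$,
\begin{equation*}
Q(\pack{G}\oplus_{\varphi}\pack{H}) = \phi_{(\eq,2)}Q(\pack{G}\ba f) + \phi_{(\pl,2)}Q(\pack{G}\con f) + \phi_{\x}Q(\pack{G}\pcon f) + \phi_{(\eq,1)}Q(\pack{G}\mba f) + \phi_{(\pl,1)}Q(\pack{G}\mcon f),
\end{equation*}
with the scalars $\phi_\ast$ depending only on $\pack{H}$ and $e$. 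Granted this lemma, the theorem follows by iterating the identity across the edges of $\pack{G}$.

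I would prove the single-edge lemma by induction on $|E(\pack{G})|$. In the base case, $\pack{G}$ has only the edge $f$; up to isolated vertices (which contribute a uniform monomial factor to both sides), $\pack{G}$ is one of the five presentations $\pack{K}_1,\ldots,\pack{K}_5$ of Figure~\ref{fig:kgrph}. Writing $\mathrm{op}_1,\ldots,\mathrm{op}_5$ for the five operations $\ba,\con,\pcon,\mba,\mcon$ in order, Lemma~\ref{lem:2sumcon} identifies the left-hand side as $Q(\pack{H}\,\mathrm{op}_i\, e)$; and each $\pack{K}_i\,\mathrm{op}_j\, f$ is edgeless, so its $Q$-value is an explicit monomial in $\al,\be,\ga$ determined by the vertex and boundary partition blocks created by $\mathrm{op}_j$. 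Collecting these monomials produces precisely the $5\times 5$ system~\eqref{eq.mainmat1}, and defining $\phi_\ast$ as its unique solution validates the base case; the existence and uniqueness of $\phi_\ast$ rests on verifying that the displayed matrix has nonzero determinant in $\mathbb{Z}[\al,\be,\ga]$. For the inductive step, fix $g\in E(\pack{G})\setminus\{f\}$. Applying the recursion~\eqref{eq:Q} at $g$ to the left-hand side, and using the packaged analogue of Lemma~\ref{lem:sumop} (commutativity of 2-sums with edge operations at a different edge, stated in Section~\ref{s.bg}), pushes each operation at $g$ inside the 2-sum; on the right-hand side, Lemma~\ref{l.cmmt} lets the $g$- and $f$-operations commute. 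The inductive hypothesis then equates the two sides.

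With the single-edge lemma in hand, the theorem follows quickly. Set $T(\pack{G}) := Q(\pack{G}\otimes_{\bphi}\pack{H})$. When $\pack{G}$ is edgeless the tensor product equals $\pack{G}$, so $T(\pack{G}) = \al^{|V(\ar{G})|}\be^{|\V|}\ga^{|\B|}$, matching the base clause of the recursion characterising $Q$. For an edge $f$ of $\pack{G}$, decompose $\pack{G}\otimes_{\bphi}\pack{H} = \tilde{\pack{G}} \oplus_{\varphi_f} \pack{H}^{(f)}$, where $\tilde{\pack{G}}$ results from $\pack{G}$ by taking all the other 2-sums (so $\tilde{\pack{G}}$ still contains $f$). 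Applying the single-edge lemma at $f$, then using iterated commutativity to identify $\tilde{\pack{G}}\,\mathrm{op}_j\, f$ with $(\pack{G}\,\mathrm{op}_j\, f)\otimes_{\bphi}\pack{H}$, shows $T$ satisfies the five-term recursion of $Q$ with coefficients $\phi_\ast$. The uniqueness half of Proposition~\ref{p.qdef} then forces $T(\pack{G}) = Q(\pack{G}; \phi_{(\eq,2)},\phi_{(\pl,2)},\phi_{\x},\phi_{(\eq,1)},\phi_{(\pl,1)}, \al,\be,\ga)$.

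The main obstacle is the bookkeeping required for the $5\times 5$ matrix: for each of the twenty-five pairs $(\pack{K}_i,\mathrm{op}_j)$ one must carefully track how $\mathrm{op}_j$ rearranges the vertex and boundary partition blocks of $\pack{K}_i$, and then convert the three resulting cardinalities into the exponent of the displayed $\al^{|V|}\be^{|\V|}\ga^{|\B|}$. The parallel verification that the matrix has nonzero determinant in $\mathbb{Z}[\al,\be,\ga]$ is a short but essential computation, since without it the $\phi_\ast$ are not well defined.
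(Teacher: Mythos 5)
Your proposal is correct and follows essentially the same route as the paper: both arguments reduce the 2-sum with $\pack{H}$ to a linear combination of 2-sums with the five one-edge presentations $\pack{K}_1,\ldots,\pack{K}_5$ (which Lemma~\ref{lem:2sumcon} converts into the five edge operations), and both extract the $\phi$'s from the linear system obtained by evaluating $Q$ on the resulting edgeless presentations; the paper merely organizes the induction over the tensor factors (proving the multivariate Theorem~\ref{thm.mainmv} first and expanding each copy of $\pack{H}$ via the recursion) rather than over $|E(\pack{G})|$ inside a single-edge lemma. One small point worth checking in your write-up: your base case yields the system whose $(i,j)$ entry is $Q(\pack{K}_i \,\square_j\, f)$ ($\square_j$ the $j$th operation), which is a priori the transpose of the paper's matrix $\bigl(Q(\pack{K}_j\,\square_i\, e)\bigr)$; these coincide because the 2-sum is commutative, so the matrix in~\eqref{eq.mainmat1} is symmetric and your $\phi$'s agree with the paper's.
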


This result will follow immediately form Theorem~\ref{thm.mainmv} below. 

Taking $c=x=y=0$ in Theorem~\ref{thm.main} gives our tensor product formula for $Z(\pack{G})$, and hence for $T_{ps}(\pack{G})$ and  $K(G\subset \Sigma)$ via Equations~\eqref{eq.ztps2} and~\eqref{eq.zkr}, respectively. Remark~\ref{r.whyq} details why in general $Q$ cannot be replaced by $Z$ in this expression.

\begin{corollary}\label{cor.main}
Let $\pack{G}$ and $\pack{H}$ be packaged arrow presentations and $e$ be a fixed edge of $\pack{H}$. For each edge $f$ of $\pack{G}$, let $\varphi_f$ be a coupling of $f$ and $e$, and  $\bphi = \{\varphi_f\}_{f\in E(\pack{G})}$.  
Then
\begin{equation}\label{eq:main2}
Z(\pack{G}\otimes_{\bphi} \pack{H}; a,b, \al,\be,\ga) = Q(\pack{G}; \phi_{(\eq,2)},\phi_{(\pl,2)},\phi_{\x},\phi_{(\eq,1)},\phi_{(\pl,1)}, \al,\be,\ga)
\end{equation}
where the $\phi$'s are the unique solutions to the  systems of equations given as follows.
\begin{equation}\label{eq.mainmat2} \alpha\beta\gamma \left[ \begin{array}{ccccc}
   \alpha\beta & 1 & 1 & \alpha & 1 \\
    1 & \alpha\gamma & 1 & 1 & \alpha  \\
    1 & 1 & \alpha & 1 & 1 \\
    \alpha & 1 & 1 & \alpha & 1 \\
    1 & \alpha & 1 & 1 & \alpha 
\end{array}\right] 
\cdot \begin{bmatrix}
    \phi_{(\eq,2)} \\[2pt]
    \phi_{(\pl,2)} \\[2pt]
    \phi_{\x}\\[2pt]
    \phi_{(\eq,1)} \\[2pt]
    \phi_{(\pl,1)} 
\end{bmatrix}
=\begin{bmatrix}
    Z(\pack{H}\ba e; a,b ,\al,\be,\ga)\\[2pt]
    Z(\pack{H}\con e; a,b, \al,\be,\ga)\\[2pt]
    Z(\pack{H}\pcon e; a,b, \al,\be,\ga)\\[2pt]
    Z(\pack{H}\mba e; a,b, \al,\be,\ga)\\[2pt]
    Z(\pack{H}\mcon e; a,b, \al,\be,\ga)
\end{bmatrix}.\end{equation}
\end{corollary}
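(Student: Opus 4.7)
The plan is to derive Corollary~\ref{cor.main} as an immediate specialisation of Theorem~\ref{thm.main} at $c=x=y=0$, using the fact that at these values the deletion-contraction recursion \eqref{eq:Q} for $Q$ collapses to the recursion \eqref{eq.arz} for $Z$.

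First I would check the following key identity: for every packaged arrow presentation $\pack{G}$,
\[
Q(\pack{G};\, a, b, 0, 0, 0, \al, \be, \ga) \;=\; Z(\pack{G};\, a, b, \al, \be, \ga).
\]
Setting $c=x=y=0$ in \eqref{eq:Q} yields exactly the recursion defining $Z$ in \eqref{eq.arz}, and the edgeless base cases for $Q$ and $Z$ coincide. Since Proposition~\ref{p.qdef} guarantees uniqueness of the polynomial satisfying \eqref{eq:Q}, and the analogous uniqueness for $Z$ is noted just before \eqref{eq.arz}, the above identity follows.

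Next I would specialise Theorem~\ref{thm.main} at $c=x=y=0$. On the left-hand side of \eqref{eq:main1} this transforms $Q(\pack{G}\otimes_{\bphi} \pack{H};\, a,b,0,0,0, \al,\be,\ga)$ into $Z(\pack{G}\otimes_{\bphi} \pack{H};\, a,b, \al,\be,\ga)$, recovering the left-hand side of \eqref{eq:main2}. Each entry of the column vector on the right-hand side of \eqref{eq.mainmat1} is of the form $Q(\pack{H}\,\bullet\, e;\, a,b,0,0,0, \al,\be,\ga)$, which likewise reduces to $Z(\pack{H}\,\bullet\, e;\, a,b, \al,\be,\ga)$. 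The coefficient matrix in \eqref{eq.mainmat1} depends only on $\al,\be,\ga$ and is therefore unchanged by the specialisation, so the system becomes exactly \eqref{eq.mainmat2}; its unique solution $(\phi_{(\eq,2)},\phi_{(\pl,2)},\phi_{\x},\phi_{(\eq,1)},\phi_{(\pl,1)})$ is the specialisation of the one appearing in Theorem~\ref{thm.main}. Crucially, the right-hand side of \eqref{eq:main1} remains $Q(\pack{G}; \phi_{(\eq,2)},\ldots, \al,\be,\ga)$ rather than collapsing to a $Z$-polynomial, because the $\phi_\bullet$'s are generally nonzero rational functions in $a,b,\al,\be,\ga$ determined by $Z(\pack{H}\,\bullet\,e)$ via \eqref{eq.mainmat2}.

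There is essentially no obstacle beyond Theorem~\ref{thm.main} itself: the uniqueness of the solution to \eqref{eq.mainmat2} is inherited from the uniqueness in Theorem~\ref{thm.main} (guaranteed by the invertibility of the common coefficient matrix over $\mathbb{Z}[\al,\be,\ga]$ localised appropriately), and no new computation is required. The corollary is thus a clean functorial specialisation, and the real work is carried out in the proof of Theorem~\ref{thm.main} and its multivariate extension.
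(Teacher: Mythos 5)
Your proof is correct and follows exactly the paper's route: the paper obtains Corollary~\ref{cor.main} by setting $c=x=y=0$ in Theorem~\ref{thm.main}, using precisely the identification $Q(\pack{G};a,b,0,0,0,\al,\be,\ga)=Z(\pack{G};a,b,\al,\be,\ga)$ that you verify via the coinciding recursions and base cases. Nothing is missing.
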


We turn our attention to a multivariate version of Theorem~\ref{thm.main}.

\begin{proposition}\label{p.qmvdef}
Let $\pack{G}=(\ar{G},\V,\B)$ be a packaged arrow presentation.
For each edge $e$ in $E(\ar{G})$ let $(a_e,b_e,c_e,x_e,y_e)$ be an ordered tuple of variables.
Then there is a unique map $\mvQ(\pack{G})$ from packaged arrow presentations to $\mathbb{Z}[a_e,b_e,c_e,x_e,y_e,\al,\be,\ga : e\in E(\ar{G})]$ such that
\begin{equation}\label{eq:Qmv}
\mvQ(\pack{G})
=\begin{cases}
a_e\mvQ(\pack{G}\ba e)+b_e\mvQ(\pack{G}\con e)+c_e\mvQ(\pack{G}\pcon e)+x_e\mvQ(\pack{G}\mba e)+y_e\mvQ(\pack{G}\mcon e), \\\hfill \qquad
\text{for any edge $e$;}
\\ \al^{|V(\ar{G})|}\be^{|\V|}\ga^{|\B|}, 
\qquad  \text{if $\ar{G}$ is edgeless.} 
\end{cases}
\end{equation}
\end{proposition}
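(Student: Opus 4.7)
The plan is to prove existence via an explicit state-sum formula and uniqueness by induction on the number of edges, with the commutativity statement of Lemma~\ref{l.cmmt} doing the heavy lifting. Label the five operations $o_1=\ba$, $o_2=\con$, $o_3=\pcon$, $o_4=\mba$, $o_5=\mcon$ with corresponding coefficients $w_1(e)=a_e$, $w_2(e)=b_e$, $w_3(e)=c_e$, $w_4(e)=x_e$, $w_5(e)=y_e$. For any function $\sigma\colon E(\ar{G})\to\{1,\ldots,5\}$, Lemma~\ref{l.cmmt} guarantees that the packaged arrow presentation $\pack{G}^{\sigma}$ obtained by applying $o_{\sigma(e)}$ at each edge $e$ is well-defined independent of the order in which the operations are performed, and $\pack{G}^{\sigma}$ is edgeless. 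I would then define
\[
\mvQ(\pack{G}) := \sum_{\sigma\colon E(\ar{G})\to\{1,\ldots,5\}} \Bigl(\prod_{e\in E(\ar{G})} w_{\sigma(e)}(e)\Bigr)\, \al^{|V(\pack{G}^{\sigma})|}\be^{|\V(\pack{G}^{\sigma})|}\ga^{|\B(\pack{G}^{\sigma})|}.
\]

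Verifying the two defining properties is then routine. The initial condition holds because when $E(\ar{G})=\emptyset$ there is a unique (empty) $\sigma$, the product is empty, and $\pack{G}^{\sigma}=\pack{G}$. For the recursion at a chosen edge $e$, I would partition the sum over $\sigma$ by the value $\sigma(e)\in\{1,\ldots,5\}$. Each of the five resulting subsums factors as the corresponding coefficient $w_{\sigma(e)}(e)$ times a sum over $\sigma'\colon E(\ar{G})\setminus\{e\}\to\{1,\ldots,5\}$, and commutativity lets me identify $\pack{G}^{\sigma}$ with $(\pack{G}\, o_{\sigma(e)}\, e)^{\sigma'}$; this matches the state-sum for $\mvQ$ of the relevant smaller packaged arrow presentation, yielding exactly~\eqref{eq:Qmv}.

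For uniqueness, I would induct on $|E(\ar{G})|$. The base case is immediate from the edgeless condition. In the inductive step, if $\mvQ$ and $\mvQ'$ both satisfy the relations, then picking any edge $e$ expresses each of $\mvQ(\pack{G})$ and $\mvQ'(\pack{G})$ as the same linear combination (with coefficients $a_e,b_e,c_e,x_e,y_e$) of their values on the five packaged arrow presentations $\pack{G}\ba e$, $\pack{G}\con e$, $\pack{G}\pcon e$, $\pack{G}\mba e$, $\pack{G}\mcon e$, and these agree by the inductive hypothesis.

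The main obstacle is bookkeeping rather than any genuine mathematical difficulty. Since Lemma~\ref{l.cmmt} already supplies the commutativity of the five edge operations, the construction proceeds uniformly; the only delicate point is confirming, when checking the recursion, that the vertex and boundary partitions of $\pack{G}^{\sigma}$ depend only on the multiset of operations applied and not on the order. This is exactly what Lemma~\ref{l.cmmt} promises, but it is worth noting that one cannot bypass this step, since the definitions of the partitions in Section~\ref{ss.dap} involve merging classes in a way that depends on the current state of the packaged arrow presentation.
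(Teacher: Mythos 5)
Your proposal is correct and takes essentially the same route as the paper: both arguments reduce to the state sum over assignments of one of the five operations to each edge (your functions $\sigma$ are the paper's tuples $(A_1,\dots,A_5)$), with Lemma~\ref{l.cmmt} guaranteeing that each summand is well-defined. Your write-up is slightly more explicit in separating existence (verifying the recursion against the state sum) from uniqueness (induction on $|E(\ar{G})|$), which the paper compresses into "a standard argument," but there is no substantive difference.
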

\begin{proof}
We use a standard argument. 
Assign an arbitrary linear order to the edges of $\pack{G}$ and compute $\mvQ(\pack{G})$ by applying Equation~\eqref{eq:Qmv} to the edges with respect to this order. The resulting summands  of $\mvQ(\pack{G})$ (before collecting terms) are in bijection with  ordered tuples $(A_1,A_2,A_3,A_4,A_5)$ where each $A_i \subseteq E(\ar{G})$, $\bigcup_{i=1}^5 A_i=E(\ar{G})$ and the  $A_i$'s are pairwise disjoint. Here $A_1$ is the set of edges that are deleted in forming the summand,  $A_2$ those that are contracted, $A_3$ those that are Penrose-contracted, $A_4$ those that are merge-deleted, and $A_5$ those that are merge-contracted. By Lemma~\ref{l.cmmt} we may carry these operations out in any order without changing the end result.  Thus the summand corresponding to $(A_1,A_2,A_3,A_4,A_5)$ contributes 
\begin{equation}\label{eq.mvqstate}
\big(\prod_{e\in A_1} a_e\big) \big(\prod_{e\in A_2} b_e\big)\big(\prod_{e\in A_3} c_e\big)\big(\prod_{e\in A_4} x_e\big) \big(\prod_{e\in A_5} y_e\big)
\alpha^{|V( \ar{G}')| } \beta^{|\V'|}\gamma^{|\B'|},
\end{equation}
where
$ (\ar{G}',\V',\B') =  \pack{G}\ba A_1\con A_2 \pcon A_3 \mba A_4 \mcon A_5 $.
The polynomial $Q(\pack{G})$ can then be obtained by summing these contributions over all possible tuples $(A_1,A_2,A_3,A_4,A_5)$. As this sum is clearly independent of the choice of linear order of the edges, the result follows.
\end{proof}

Observe that Proposition~\ref{p.qdef} follows immediately from Proposition~\ref{p.qmvdef}.

We use
$\mvQ(\pack{G}; \bw, \al,\be,\ga)$, 
where $\bw=\{(a_e,b_e,c_e,x_e,y_e) :e\in E (\ar{G})\}$
to denote the polynomial defined by~\eqref{eq:Qmv}.

The following result is immediate.
\begin{proposition}\label{p.dunion}
Let $\pack{G}=(\ar{G},\V,\B)$ be a packaged arrow presentation and $\pack{H} = (\ar{H},\{\{v\}\},\{\{b\}\})$ be a packaged arrow presentation consisting of a single vertex $v$, a single boundary component $b$ and no edges (i.e., one circle with no arrows). Denote the disjoint union of their arrow presentations by $\ar{G}\sqcup \ar{H}$. Let $\pack{G} \sqcup \pack{H}$ be a packaged arrow presentation of the form $(\ar{G}\sqcup \ar{H}, \V', \B')$ where $\V'$ and $\B'$ restrict to the partitions $\V$ and $\B$, respectively.
Then 
\[\mvQ(\pack{G} \sqcup \pack{H}; \bw, \al,\be,\ga) = \al \be^p \ga^q \,
\mvQ(\pack{G}; \bw, \al,\be,\ga), \] 
where $p=1$ if $v$ is in its own block of the partition $\V'$, and $p=0$ otherwise; and 
$q=1$ if $b$ is in its own block of the partition $\B'$, and $q=0$ otherwise.
\end{proposition}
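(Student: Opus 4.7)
My plan is to prove Proposition~\ref{p.dunion} by induction on $|E(\ar{G})|$, using the recursive definition of $\mvQ$ given by~\eqref{eq:Qmv}. The base case is when $\ar{G}$ has no edges, so that $\ar{G}\sqcup\ar{H}$ is also edgeless. Then both sides are defined via the second clause of~\eqref{eq:Qmv}, and since $|V(\ar{G}\sqcup\ar{H})|=|V(\ar{G})|+1$, $|\V'|=|\V|+p$, and $|\B'|=|\B|+q$ by the very definitions of $p$ and $q$, the identity $\mvQ(\pack{G}\sqcup\pack{H})=\al\be^p\ga^q\,\mvQ(\pack{G})$ is immediate.

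For the inductive step, I will choose any edge $e\in E(\ar{G})=E(\ar{G}\sqcup\ar{H})$ and apply the recursion~\eqref{eq:Qmv} at $e$ to $\pack{G}\sqcup\pack{H}$. The crux is the identity
\[
(\pack{G}\sqcup\pack{H})\star e \;=\; (\pack{G}\star e)\sqcup\pack{H}
\]
for each of the five operations $\star\in\{\ba,\con,\pcon,\mba,\mcon\}$, where the right-hand side is interpreted with the vertex and boundary partitions inherited from $\V'$ and $\B'$ in the natural way. This identity is clear from the local nature of the operations: each one alters only the vertex and boundary classes containing objects incident to $e$, and neither $v$ nor $b$ is incident to any edge of $\pack{G}$. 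Applying the inductive hypothesis to $\pack{G}\star e$ and summing then gives the claim, provided $p$ and $q$ are unchanged by the operation.

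The only subtlety — and what I expect to be the main (if minor) obstacle — is verifying that the parameters $p$ and $q$ are indeed preserved under each operation $\star e$. When $p=1$, the singleton block $\{v\}$ is disjoint from all blocks touched by any edge operation, so it is never merged with another block. When $p=0$, $v$ sits in a block that also contains some vertex $u\in V(\ar{G})$; the operation $\star e$ may merge this block with other blocks or alter which vertices of $\pack{G}$ lie in it, but the block is never emptied of $\pack{G}$-vertices (contraction merges vertices but does not delete the block's support), so $v$ still shares a block with some vertex of $\pack{G}\star e$. The analogous statement for $q$ follows by the same reasoning applied to the boundary partition. Once this preservation is recorded, combining the five recursive terms and applying the inductive hypothesis to each factor $\mvQ((\pack{G}\star e)\sqcup\pack{H})=\al\be^p\ga^q\,\mvQ(\pack{G}\star e)$ yields
\[
\mvQ(\pack{G}\sqcup\pack{H}) \;=\; \al\be^p\ga^q\,\mvQ(\pack{G}),
\]
completing the induction. (Alternatively, one can bypass the induction altogether by running the state-sum formula~\eqref{eq.mvqstate} through: the tuples $(A_1,\dots,A_5)$ for $\pack{G}\sqcup\pack{H}$ are in bijection with those for $\pack{G}$ since $\pack{H}$ contributes no edges, and the above preservation observations show that each summand picks up exactly one extra factor of $\al$, $p$ extra factors of $\be$, and $q$ extra factors of $\ga$.)
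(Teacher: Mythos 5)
Your proof is correct. Note that the paper itself offers no argument here --- it simply declares the proposition ``immediate'' --- so there is nothing to compare against; your write-up supplies the details the authors chose to omit. Your parenthetical state-sum argument via~\eqref{eq.mvqstate} is the cleanest route and is presumably what the authors had in mind: the tuples $(A_1,\dots,A_5)$ are identical for $\pack{G}$ and $\pack{G}\sqcup\pack{H}$, and each summand for the latter differs only by one extra vertex, $p$ extra vertex classes and $q$ extra boundary classes. Your inductive version is also sound, and you correctly isolate the one point needing care (preservation of $p$ and $q$ under the five edge operations): when $p=1$ the singleton $\{v\}$ can never be one of the blocks modified by an operation at $e$, since those blocks all contain a vertex or boundary component incident to $e$; and when $p=0$ the merged block always retains at least one vertex of $\ar{G}\star e$ because the sets $T$ and $S$ of created vertices and boundary components are nonempty.
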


The following theorem is the main result of this paper. It provides the tensor product formula from which formulas for the Krushkal, Bollob\'as--Riordan, ribbon graph, topological transition and Tutte polynomials will follow. In particular, it satisfactorially completes the work initiated by Huggett and Moffatt in~\cite{MR2854787}. We note that Corollary~\ref{cor.fulltens} offers a cleaner restatement of the result.

\begin{theorem}\label{thm.mainmv}
Let $\pack{G}$ and $\pack{H}^{(f_i)}$, for $i=1,\ldots,k$, be packaged arrow presentations. Suppose that each $\pack{H}^{(f_i)}$ has an edge  $e^{(i)}$, and that $f_1, \ldots, f_k$ are distinct  edges of $\pack{G}$.
Suppose also that for each $i$ there is coupling $\varphi_i$ of $f_i$ and $e^{(i)}$.
Let 
\[ \pack{G}[k]   = \pack{G}\oplus_{\varphi_1} \pack{H}^{(f_1)}\oplus_{\varphi_2} \cdots \oplus_{\varphi_k} \pack{H}^{(f_k)}.\]
Then
\begin{equation}\label{eq:mainmv}
\mvQ(\pack{G}[k]; \bw, \al,\be,\ga) = \mvQ(\pack{G}; \bw', \al,\be,\ga),
\end{equation}
where 
$\bw=\{(a_f,b_f,c_f,x_f,y_f) :f\in E(\pack{G}[k]) \}$
and 
\begin{multline*}
   \bw'=
\big\{\big(\phi^{(f_i)}_{(\eq,2)},\phi^{(f_i)}_{(\pl,2)},\phi^{(f_i)}_{\x},\phi^{(f_i)}_{(\eq,1)},\phi^{(f_i)}_{(\pl,1)}\big) : i=1,\ldots, k  \big\} \\ \cup
\big\{(a_f,b_f,c_f,x_f,y_f) :f\in E(\pack{G})-\{f_1,\ldots ,f_k\}\big\}. 
\end{multline*}
and where, for each $f_i$,  
$\phi^{(f_i)}_{(\eq,2)}$, $\phi^{(f_i)}_{(\pl,2)},  \phi^{(f_i)}_{\x}$, $\phi^{(f_i)}_{(\eq,1)}$,and $\phi^{(f_i)}_{(\pl,1)} $ arises from the unique solution to
\begin{equation}\label{eq:mainmtx}\alpha\beta\gamma \left[ \begin{array}{ccccc}
    \alpha\beta & 1 & 1 & \alpha & 1 \\
    1 & \alpha\gamma & 1 & 1 & \alpha  \\
    1 & 1 & \alpha & 1 & 1 \\
    \alpha & 1 & 1 & \alpha & 1 \\
    1 & \alpha & 1 & 1 & \alpha 
\end{array}\right] 
\cdot \begin{bmatrix}
    \phi^{(f_i)}_{(\eq,2)} \\[8pt] 
    \phi^{(f_i)}_{(\pl,2)} \\[8pt] 
    \phi^{(f_i)}_{\x}\\[8pt] 
    \phi^{(f_i)}_{(\eq,1)} \\[8pt] 
    \phi^{(f_i)}_{(\pl,1)} 
\end{bmatrix}
=\begin{bmatrix}
    \mvQ(\pack{H}^{(f_i)}\ba e^{(i)}; \bw ,\al,\be,\ga)\\[5pt] 
    \mvQ(\pack{H}^{(f_i)}\con e^{(i)}; \bw, \al,\be,\ga)\\[5pt] 
    \mvQ(\pack{H}^{(f_i)}\pcon e^{(i)}; \bw,  \al,\be,\ga)\\[5pt] 
    \mvQ(\pack{H}^{(f_i)}\mba e^{(i)}; \bw , \al,\be,\ga)\\[5pt] 
    \mvQ(\pack{H}^{(f_i)}\mcon e^{(i)}; \bw , \al,\be,\ga)
\end{bmatrix}.\end{equation}
\end{theorem}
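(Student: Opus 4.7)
The plan is to reduce Theorem~\ref{thm.mainmv} to the case $k=1$ (a single 2-sum) by iteration, and then prove that case by induction on $|E(\pack{H})|$, with the base case handled by direct computation on the five packaged arrow presentations $\pack{K}_1,\ldots,\pack{K}_5$ of Figure~\ref{fig:kgrph}. The reduction to $k=1$ uses the packaged analogue of Lemma~\ref{lem:sumord}: since 2-sums at distinct edges of $\pack{G}$ commute, $\pack{G}[k]$ can be built by successive single 2-sums, and iterating the $k=1$ result replaces the weights at $f_1,\ldots,f_k$ by the respective $\phi^{(f_i)}$'s without affecting any other weights.

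For the $k=1$ base case $|E(\pack{H})|=1$, Proposition~\ref{p.dunion} absorbs any isolated vertices or boundary components of $\pack{H}$ into multiplicative factors of $\alpha,\beta,\gamma$, reducing to the situation $\pack{H}=\pack{K}_m$ for some $m\in\{1,\ldots,5\}$. Lemma~\ref{lem:2sumcon} then identifies $\pack{G}\oplus_{\varphi}\pack{K}_m$ with $\pack{G}\,\mathrm{op}_m f$, where $\mathrm{op}_m\in\{\ba,\con,\pcon,\mba,\mcon\}$ is the corresponding operation. The claimed identity therefore reduces to $\mvQ(\pack{G}\,\mathrm{op}_m f)=\sum_{j=1}^{5}\phi_j\,\mvQ(\pack{G}\,\mathrm{op}_j f)$, which manifestly holds with $\phi_j=\delta_{mj}$. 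It remains to check that this Kronecker vector is the solution of the matrix system~\eqref{eq:mainmtx} when $\pack{H}=\pack{K}_m$: substituting $\phi=\delta_m$ into the left-hand side of~\eqref{eq:mainmtx} reduces this to the $25$ identities
\[ \mvQ(\pack{K}_m\,\mathrm{op}_j e) \;=\; \alpha\beta\gamma\,\tilde{M}_{jm}, \qquad j,m\in\{1,\ldots,5\}, \]
where $\tilde{M}$ denotes the $5\times 5$ matrix in~\eqref{eq:mainmtx}. Each $\pack{K}_m\,\mathrm{op}_j e$ is edgeless, so its $\mvQ$ value is $\alpha^{|V|}\beta^{|\V|}\gamma^{|\B|}$, and the $25$ identities then follow by direct enumeration of the vertex and boundary classes in each case. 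A routine determinant computation gives $\det\tilde{M}=\alpha^2(\alpha-1)^2(\alpha+2)(\beta-1)(\gamma-1)$, which is nonzero in $\mathbb{Z}[\alpha,\beta,\gamma]$, so $\phi$ is uniquely determined.

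For the inductive step, take any $g\in E(\pack{H})\setminus\{e\}$ and apply the defining recursion~\eqref{eq:Qmv} at $g$ to $\pack{G}\oplus_{\varphi}\pack{H}$. The packaged analogue of Lemma~\ref{lem:sumop} lets each edge operation commute past the 2-sum, so every resulting term has the form $\mvQ(\pack{G}\oplus_{\varphi}(\pack{H}\,\mathrm{op}_m g))$. The inductive hypothesis then rewrites each such term as $\mvQ(\pack{G};\bw'_{\mathrm{op}_m g})$, where the weights at $f$ are the solutions $\phi^{(\mathrm{op}_m g)}$ of the matrix system corresponding to $\pack{H}\,\mathrm{op}_m g$. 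Since $\mvQ(\pack{H}\,\mathrm{op}_j e)$ itself satisfies~\eqref{eq:Qmv} at $g$ (using Lemma~\ref{l.cmmt} to commute the operations at $g$ and $e$), the right-hand side of~\eqref{eq:mainmtx} for $\pack{H}$ is the analogous linear combination of the right-hand sides for the $\pack{H}\,\mathrm{op}_m g$, and applying $\tilde{M}^{-1}$ shows the effective weights at $f$ in the above sum assemble into exactly the $\phi^{(f)}$ prescribed by the matrix system for $\pack{H}$, completing the induction.

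The main obstacle is the base-case verification: each of the $25$ identities $\mvQ(\pack{K}_m\,\mathrm{op}_j e)=\alpha\beta\gamma\,\tilde{M}_{jm}$ requires carefully tracking how the five edge operations transform the two-vertex, two-boundary elementary configurations of Figure~\ref{fig:kgrph}, paying close attention to which vertex classes merge under contraction-type operations and which boundary classes merge under deletion-type operations. Once these identities and the determinant calculation are in hand, both the induction on $|E(\pack{H})|$ and the iteration to general $k$ reduce to routine applications of the commutativity results established in Section~\ref{s.bg}.
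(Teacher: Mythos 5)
Your proposal is correct, but it reaches the result by a genuinely different route from the paper. The paper also inducts on $k$, but its key move is to apply the recursion~\eqref{eq:Qmv} to \emph{all} edges of $\pack{H}^{(f_k)}$ except $e^{(k)}$ at once and use Proposition~\ref{p.dunion} to write $\mvQ(\pack{H}^{(f_k)})$ as an explicit linear combination $\sum_j \phi_j\,\mvQ(\pack{K}_j)$; the $\phi$'s are thereby \emph{defined} as the coefficients of this expansion, the identity $\mvQ(\pack{G}[k])=\mvQ(\pack{G}[k-1];\bw'')$ follows in one step from Lemmas~\ref{l.cmmt} and~\ref{lem:2sumcon}, and the matrix system~\eqref{eq:mainmtx} is then \emph{derived} by applying the five operations to $e^{(k)}$ and computing $\mvQ(\pack{K}_j\ba e)$ etc.\ directly. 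You instead take the matrix system as the definition of the $\phi$'s and verify the identity by a second induction on $|E(\pack{H})|$: your base case (the $25$ evaluations $\mvQ(\pack{K}_m\,\mathrm{op}_j\,e)=\al\be\ga\tilde M_{jm}$ and the determinant $\al^2(\al-1)^2(\al+2)(\be-1)(\ga-1)$, both of which check out) is exactly the computation the paper does once at the end, while your inductive step replaces the paper's one-shot expansion with an edge-by-edge peeling. Your route costs a little more: it needs the observation, which you use only implicitly, that $\mvQ(\pack{G};\bw,\al,\be,\ga)$ is \emph{linear} in the weight tuple $(a_f,b_f,c_f,x_f,y_f)$ for each fixed edge $f$ --- this is what lets you combine $\sum_m c_m\,\mvQ(\pack{G};\bw'_{\mathrm{op}_m g})$ into a single $\mvQ(\pack{G};\bw')$, and also what handles the scalar factor $\mu$ when $\pack{H}$ is $\pack{K}_m$ together with isolated vertices, so that the matrix solution $\phi_j=\mu\,\delta_{jm}$ still gives the right answer. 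That multilinearity is immediate from the state sum~\eqref{eq.mvqstate}, but you should state it as a lemma. What your approach buys is that the $\phi$'s are treated purely as solutions of the linear system throughout, so you never need to argue separately that the coefficient expansion over the $\pack{K}_j$'s is well defined; what the paper's approach buys is that the whole argument collapses to a single application of the recursion plus the commutation lemmas, with the linear algebra appearing only at the very end.
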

\begin{proof}
We prove the result by induction on $k$. If $k=0$ there is nothing to prove. 
Next suppose that Equation~\eqref{eq:mainmv} is true in the  $\pack{G}[k-1]$ case.

 Apply Equation~\eqref{eq:Qmv} to each of the edges in  $\pack{H}^{(f_k)}$ other than $e^{(k)}$. Then use Proposition~\ref{p.dunion} to remove any vertices that were created in this application of Equation~\eqref{eq:Qmv} as well as any isolated vertices that were part of $\pack{H}^{(f_k)}$.
 Thus we may write 
\begin{align*}
\mvQ(\pack{H}^{(f_k)}; \bw, \al,\be,\ga)
&= 
\phi^{(f_k)}_{(\eq,2)} \mvQ\big(\pack{K}_1 ; \bw, \al,\be,\ga))
+\phi^{(f_k)}_{(\pl,2)} \mvQ\big(\pack{K}_2 ; \bw, \al,\be,\ga))
\\
&\qquad +\phi^{(f_k)}_{(\x)} \mvQ\big(\pack{K}_3 ; \bw, \al,\be,\ga))
+\phi^{(f_k)}_{(\eq,1)} \mvQ\big(\pack{K}_4 ; \bw, \al,\be,\ga))
\\
&\qquad \qquad +\phi^{(f_k)}_{(\pl,1)} \mvQ\big(\pack{K}_5 ; \bw, \al,\be,\ga)),
\end{align*}
where  $\pack{K}_1, \ldots, \pack{K}_5$ are as in Figure~\ref{fig:kgrph}.

Next, applying Equation~\eqref{eq:Qmv} and Proposition~\ref{p.dunion} to the copy of $\pack{H}^{(f_k)}$ in $\pack{G}[k]$ and making use of Lemma~\ref{l.cmmt}
we can write
\begin{align}
\mvQ(\pack{G}[k]; \bw, \al,\be,\ga)
&= 
\phi^{(f_k)}_{(\eq,2)} \mvQ\big(\pack{G}[k-1]\oplus_{\varphi_k} \pack{K}_1 ; \bw, \al,\be,\ga))
\nonumber
\\&\qquad+\phi^{(f_k)}_{(\pl,2)} \mvQ\big(\pack{G}[k-1]\oplus_{\varphi_k} \pack{K}_2 ; \bw, \al,\be,\ga))
\nonumber
\\& \qquad\qquad
+\phi^{(f_k)}_{(\x)} \mvQ\big(\pack{G}[k-1]\oplus_{\varphi_k} \pack{K}_3 ; \bw, \al,\be,\ga))
\label{eq.inproof1}
\\&\qquad\qquad\qquad
+\phi^{(f_k)}_{(\eq,1)} \mvQ\big(\pack{G}[k-1]\oplus_{\varphi_k} \pack{K}_4 ; \bw, \al,\be,\ga))
\nonumber
\\& \qquad\qquad\qquad\qquad
+\phi^{(f_k)}_{(\pl,1)} \mvQ\big(\pack{G}[k-1]\oplus_{\varphi_k} \pack{K}_5 ; \bw, \al,\be,\ga)),
\nonumber
\end{align}
By Lemma~\ref{lem:2sumcon}, we may rewrite this as
\begin{align}
\mvQ(\pack{G}[k]; \bw, \al,\be,\ga)
&= 
\phi^{(f_k)}_{(\eq,2)} \mvQ\big(\pack{G}[k-1] \ba f_k ; \bw, \al,\be,\ga)
\nonumber
\\& 
\qquad+\phi^{(f_k)}_{(\pl,2)} \mvQ\big(\pack{G}[k-1] \con f_k ; \bw, \al,\be,\ga)
\nonumber
\\&
\label{eq.inproof2}
\qquad\qquad
+\phi^{(f_k)}_{(\x)} \mvQ\big(\pack{G}[k-1] \pcon f_k ; \bw, \al,\be,\ga)
\\& \qquad\qquad\qquad
+\phi^{(f_k)}_{(\eq,1)} \mvQ\big(\pack{G}[k-1] \mba f_k ; \bw, \al,\be,\ga)
\nonumber
\\& \qquad\qquad\qquad\qquad
+\phi^{(f_k)}_{(\pl,1)} \mvQ\big(\pack{G}[k-1] \mcon f_k ; \bw, \al,\be,\ga)
\nonumber
\\&=
\mvQ(\pack{G}[k-1]; \bw'', \al,\be,\ga),
\nonumber
\end{align}
where 
\[\bw''=
\{(\phi^{(f_k)}_{(\eq,2)},\phi^{(f_k)}_{(\pl,2)},\phi^{(f_k)}_{\x},\phi^{(f_k)}_{(\eq,1)},\phi^{(f_k)}_{(\pl,1)})\} \cup
\{(a_f,b_f,c_f,x_f,y_f) :f\in E(\pack{G}[k-1])-\{f_k\}\}.
\]
The last equality follows from~\eqref{eq:Qmv} (treating the $\phi$'s as variables).
By the inductive hypothesis it follows that  
$\mvQ(\pack{G}[k]; \bw, \al,\be,\ga) = \mvQ(\pack{G}; \bw'; \al,\be,\ga)$.

It remains to show that the $\phi$'s are the solutions to the given systems of equations. We can write
\begin{align*}
    \mvQ(\pack{H}^{(f_k)}\ba e; \bw, \al,\be,\ga)
&= 
\phi^{(f_k)}_{(\eq,2)} \mvQ\big(\pack{K}_1 \ba e; \bw, \al,\be,\ga)
+\phi^{(f_k)}_{(\pl,2)} \mvQ\big(\pack{K}_2 \ba e; \bw, \al,\be,\ga)
\\
&\qquad +\phi^{(f_k)}_{(\x)} \mvQ\big(\pack{K}_3 \ba e; \bw, \al,\be,\ga)
+\phi^{(f_k)}_{(\eq,1)} \mvQ\big(\pack{K}_4 \ba e; \bw, \al,\be,\ga)
\\
&\qquad \qquad +\phi^{(f_k)}_{(\pl,1)} \mvQ\big(\pack{K}_5 \ba e; \bw, \al,\be,\ga),\\
&= \phi^{(f_k)}_{(\eq,2)} \alpha^2\beta^2\gamma
+ \phi^{(f_k)}_{(\pl,2)} \alpha\beta\gamma
+ \phi^{(f_k)}_{(\x)} \alpha\beta\gamma
+ \phi^{(f_k)}_{(\eq,1)} \alpha^2\beta\gamma
+ \phi^{(f_k)}_{(\pl,1)} \alpha\beta\gamma,
\end{align*}
and similarly for $ \mvQ(\pack{H}^{(f_k)}\con e; \bw, \al,\be,\ga)$, $ \mvQ(\pack{H}^{(f_k)}\pcon e; \bw, \al,\be,\ga)$, $ \mvQ(\pack{H}^{(f_k)}\mba e; \bw, \al,\be,\ga)$, and \linebreak $ \mvQ(\pack{H}^{(f_k)}\mcon e; \bw, \al,\be,\ga)$. The five resulting equations can be rewritten as the matrix equation~\eqref{eq:mainmtx}. As the matrix is non-singular, it uniquely determines the $\phi$'s.
\end{proof}

The following corollary is immediate from the proof of Theorem~\ref{thm.mainmv} and offers a cleaner statement of it  and a form similar to Brylawski's original tensor product formula for the Tutte polynomial.

\begin{corollary}\label{cor.fulltens}
Let $\pack{G}$, $\{\pack{H}^{(f)}\}_{f\in E(\pack{G})}$, $e^{(f)}$ and $\bphi$ be as in Definition~\ref{def:fulltens}.
   Then 
    \[\mvQ(\pack{G}\ot_{\bphi} \{\pack{H}^{(f)}\}_{f\in E(\pack{G})}; \bw, \al,\be,\ga) = \mvQ(\pack{G}; \bw', \al,\be,\ga),\]
    where 
    $\bw=\{(a_f,b_f,c_f,x_f,y_f) :f\in E(\pack{G}[k]) \}$
    and 
    \[\bw'=
    \big\{\big(\phi^{(f)}_{(\eq,2)},\phi^{(f)}_{(\pl,2)},\phi^{(f)}_{\x},\phi^{(f)}_{(\eq,1)},\phi^{(f)}_{(\pl,1)}\big) : f \in E(\ar{G}) \big\},
    \]
    and where, for each $f$, $\phi^{(f)}_{(\eq,2)}$, $ \phi^{(f)}_{(\pl,2)}$, $\phi^{(f)}_{\x} $, $\phi^{(f)}_{(\eq,1)} $ and $\phi^{(f)}_{(\pl,1)} $ arises from the unique solution to the system of equations of the form~\eqref{eq:mainmtx}, replacing the $f_i$ with $f$ and $e^{(i)}$ with $e^{(f)}$ in the displayed matrices. 
\end{corollary}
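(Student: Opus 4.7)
The plan is to deduce the corollary as an immediate specialisation of Theorem~\ref{thm.mainmv}, namely the case in which every edge of $\pack{G}$ participates in the iterated 2-sum. First I would observe that, by Definition~\ref{def:fulltens}, the tensor product $\pack{G}\ot_{\bphi} \{\pack{H}^{(f)}\}_{f\in E(\pack{G})}$ is, up to the ordering of the 2-sum operations, exactly the iterated 2-sum
\[ \pack{G} \oplus_{\varphi_{f_1}} \pack{H}^{(f_1)} \oplus_{\varphi_{f_2}} \pack{H}^{(f_2)} \oplus \cdots \oplus_{\varphi_{f_k}} \pack{H}^{(f_k)} \]
where $f_1,\ldots,f_k$ is any enumeration of $E(\pack{G})$. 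The commutativity and associativity of the 2-sum with respect to different edges (recorded in the two packaged-arrow-presentation analogues of Lemmas~\ref{lem:sumord} and~\ref{lem:sumop}) ensure that this iterated expression is independent of the chosen enumeration, and hence equal to the object $\pack{G}[k]$ appearing in the statement of Theorem~\ref{thm.mainmv}.

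Next I would apply Theorem~\ref{thm.mainmv} with this choice of $k$ and enumeration. The conclusion
\[\mvQ(\pack{G}[k]; \bw, \al,\be,\ga) = \mvQ(\pack{G}; \bw', \al,\be,\ga)\]
transfers verbatim, and the weight family
\[
\bw' = \big\{\big(\phi^{(f_i)}_{(\eq,2)},\phi^{(f_i)}_{(\pl,2)},\phi^{(f_i)}_{\x},\phi^{(f_i)}_{(\eq,1)},\phi^{(f_i)}_{(\pl,1)}\big) : i=1,\ldots,k\big\}
\cup \big\{(a_f,b_f,c_f,x_f,y_f) : f\in E(\pack{G})-\{f_1,\ldots,f_k\}\big\}
\]
simplifies: because $\{f_1,\ldots,f_k\}=E(\pack{G})$, the second set is empty, and what remains is precisely the set of $\phi$-tuples indexed by $f \in E(\pack{G})$ described in the corollary. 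Relabelling $f_i$ as $f$ and $e^{(i)}$ as $e^{(f)}$ throughout the system~\eqref{eq:mainmtx} yields the stated defining system for each tuple $(\phi^{(f)}_{(\eq,2)}, \phi^{(f)}_{(\pl,2)}, \phi^{(f)}_{\x}, \phi^{(f)}_{(\eq,1)}, \phi^{(f)}_{(\pl,1)})$.

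There is essentially no obstacle here beyond bookkeeping: Theorem~\ref{thm.mainmv} already handles the inductive unfolding of the 2-sums via the five special packaged arrow presentations $\pack{K}_1,\ldots,\pack{K}_5$ together with Lemma~\ref{lem:2sumcon}, and the non-singularity of the $5\times 5$ coefficient matrix in~\eqref{eq:mainmtx} guarantees uniqueness of the $\phi$'s. The only point worth checking carefully is that the ordering of the 2-sums does not affect either the final packaged arrow presentation or the derivation of the $\phi$'s; but this is precisely what the commutativity/associativity lemmas give, so the corollary follows at once.
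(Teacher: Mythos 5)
Your proposal is correct and matches the paper's route: the paper simply notes the corollary is immediate from Theorem~\ref{thm.mainmv} (and its proof), exactly the specialisation to $\{f_1,\ldots,f_k\}=E(\pack{G})$ that you spell out. Your additional remarks on order-independence via the associativity/commutativity lemmas are sound bookkeeping, not a different argument.
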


\begin{proof}[Proof of Theorem~\ref{thm.main}]
The result follows from Corollary~\ref{cor.fulltens}. 
\end{proof}

We highlight the following 2-sum formula as a special case.
\begin{corollary}
Let $\pack{G}$, $\pack{H}$, $e$ and $\varphi$ be as in Definition~\ref{d.big2sum}.
Then
\[Q(\pack{G}\oplus_{\varphi}\pack{H};a,b,c,x,y,\al,\be,\ga)=
\begin{bmatrix}
    Q(\pack{G}\ba e; a,b,c,x,y ,\al,\be,\ga)\\[2pt]
    Q(\pack{G}\con e; a,b,c,x,y, \al,\be,\ga)\\[2pt]
    Q(\pack{G}\pcon e; a,b,c,x,y,  \al,\be,\ga)\\[2pt]
    Q(\pack{G}\mba e; a,b,c,x,y , \al,\be,\ga)\\[2pt]
    Q(\pack{G}\mcon e; a,b,c,x,y , \al,\be,\ga)
\end{bmatrix}
\cdot \begin{bmatrix}
    \phi_{(\eq,2)} \\[2pt]
    \phi_{(\pl,2)} \\[2pt]
    \phi_{\x}\\[2pt]
    \phi_{(\eq,1)} \\[2pt]
    \phi_{(\pl,1)} 
\end{bmatrix}
,\]
where the $\phi$'s arise from the unique solution to
\begin{equation*}
   \alpha\beta\gamma \left[ \begin{array}{ccccc}
   \alpha\beta & 1 & 1 & \alpha & 1 \\
    1 & \alpha\gamma & 1 & 1 & \alpha  \\
    1 & 1 & \alpha & 1 & 1 \\
    \alpha & 1 & 1 & \alpha & 1 \\
    1 & \alpha & 1 & 1 & \alpha 
\end{array}\right] 
\cdot \begin{bmatrix}
    \phi_{(\eq,2)} \\[2pt]
    \phi_{(\pl,2)} \\[2pt]
    \phi_{\x}\\[2pt]
    \phi_{(\eq,1)} \\[2pt]
    \phi_{(\pl,1)} 
\end{bmatrix}
=\begin{bmatrix}
    Q(\pack{H}\ba e; a,b,c,x,y,\al,\be,\ga)\\[2pt]
    Q(\pack{H}\con e; a,b,c,x,y, \al,\be,\ga)\\[2pt]
    Q(\pack{H}\pcon e; a,b,c,x,y,  \al,\be,\ga)\\[2pt]
    Q(\pack{H}\mba e; a,b,c,x,y , \al,\be,\ga)\\[2pt]
    Q(\pack{H}\mcon e; a,b,c,x,y , \al,\be,\ga)
\end{bmatrix}.
\end{equation*}
\end{corollary}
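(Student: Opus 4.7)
The plan is to derive this corollary as a direct specialization of Theorem~\ref{thm.mainmv} (equivalently, Corollary~\ref{cor.fulltens}) to the case of a single 2-sum, i.e., $k=1$. The statement as displayed on the right-hand side of the ``$=$'' sign is (modulo what appears to be a typo, where each $\pack{G}\ba e$, $\pack{G}\con e$, $\ldots$ should be read as $\pack{G}\ba f$, $\pack{G}\con f$, $\ldots$, since $e$ is an edge of $\pack{H}$) just the univariate form of Equation~\eqref{eq.inproof2} in the proof of Theorem~\ref{thm.mainmv}, so essentially all the work has already been done. What is left is to rearrange that equation into the compact matrix form stated here.

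First I would invoke Theorem~\ref{thm.mainmv} with $k=1$, taking $f_1=f$, $\pack{H}^{(f_1)}=\pack{H}$, $e^{(1)}=e$, and $\varphi_1=\varphi$, so that $\pack{G}[1]=\pack{G}\oplus_{\varphi}\pack{H}$. This gives, in multivariate form,
\[
\mvQ(\pack{G}\oplus_{\varphi}\pack{H};\bw,\al,\be,\ga)=\mvQ(\pack{G};\bw',\al,\be,\ga),
\]
where $\bw'$ agrees with $\bw$ except that the tuple at the edge $f$ is replaced by $(\phi_{(\eq,2)},\phi_{(\pl,2)},\phi_{\x},\phi_{(\eq,1)},\phi_{(\pl,1)})$ determined by the matrix system~\eqref{eq:mainmtx} applied to $\pack{H}$ and $e$. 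The uniqueness of the $\phi$'s follows from non-singularity of the $5\times 5$ matrix, exactly as verified in the proof of Theorem~\ref{thm.mainmv}.

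Next, I would apply the defining deletion--contraction--etc.\ relation~\eqref{eq:Qmv} to the edge $f$ of $\pack{G}$ on the right-hand side. Since $f$ has the new weight tuple $(\phi_{(\eq,2)},\ldots,\phi_{(\pl,1)})$, and since all other edges retain their original weights, this yields
\begin{align*}
\mvQ(\pack{G};\bw',\al,\be,\ga)
&=\phi_{(\eq,2)}\,\mvQ(\pack{G}\ba f;\bw,\al,\be,\ga)
+\phi_{(\pl,2)}\,\mvQ(\pack{G}\con f;\bw,\al,\be,\ga)\\
&\quad+\phi_{\x}\,\mvQ(\pack{G}\pcon f;\bw,\al,\be,\ga)
+\phi_{(\eq,1)}\,\mvQ(\pack{G}\mba f;\bw,\al,\be,\ga)\\
&\quad+\phi_{(\pl,1)}\,\mvQ(\pack{G}\mcon f;\bw,\al,\be,\ga).
\end{align*}

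Finally, specialising the multivariate polynomial $\mvQ$ to $Q$ by setting $(a_g,b_g,c_g,x_g,y_g)=(a,b,c,x,y)$ uniformly on every edge, this last identity reads exactly as the dot-product expression displayed in the corollary, and the matrix equation for the $\phi$'s is precisely the $\pack{H}$-side of~\eqref{eq:mainmtx}. There is no substantive obstacle here beyond notational bookkeeping: the only point requiring any care is verifying that the deletion--contraction expansion on the edge $f$ and the substitution of weights commute, which is immediate since the operations of deletion, contraction, Penrose-contraction, merge-deletion and merge-contraction act locally on $f$ and do not see the weights on the other edges.
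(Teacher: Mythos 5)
Your proposal is correct and matches the paper's intent: the paper offers no separate proof, presenting this corollary as an immediate special case of Theorem~\ref{thm.mainmv}, and your argument (take $k=1$, then expand $\mvQ(\pack{G};\bw')$ once at the edge $f$ via~\eqref{eq:Qmv} and specialise to the univariate $Q$) is exactly the intended derivation, already contained in Equations~\eqref{eq.inproof1}--\eqref{eq.inproof2}. You are also right that the occurrences of $e$ in the left-hand column vector of the statement should be read as the edge $f$ of $\pack{G}$.
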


\begin{remark}\label{r.whyq}
Our aim was to extend Brylawski's tensor product formula to $Z(\pack{G}\ot_{\bphi} \pack{H})$. However, we needed to consider the more general polynomial $Q(\pack{G}\ot_{\bphi} \pack{H})$. The reason for this can be seen in the rewriting of Equation~\eqref{eq.inproof1} as Equation~\eqref{eq.inproof2} where it is necessary to consider the operations $\pcon$, $\mba$, and $\mcon$ in addition to $\ba$ and $\con$.
 Such a need can also been seen in~\cite[Section~6]{EllisMoff} where a tensor product formula is given for the transition polynomial rather than the ribbon graph polynomial.
Avoiding the use of $Q$ altogether, and having a tensor product formula completely in terms of $Z$ can be done via Technique~\ref{r.phi0} below if we insist 
all vertex and boundary partitions have only one block and all the arrow presentations represent  orientable ribbon graphs (which ensures the $\phi^{(f_i)}_{\x}$ terms are zero, as in the proof of \cite[Corollary 6.3]{EllisMoff}). As we note in Remark~\ref{rem.hugg},  doing so results in~\cite[Theorem~2]{MR2854787}.
\end{remark}

The following observation, which we present as a technique, allows us to recover known results from Theorem~\ref{thm.mainmv}.

\begin{technique}\label{r.phi0}
If we can deduce from the properties of $\pack{H}^{(f_i)}$ that specific $\pack{K}_i$ cannot arise after applying  Equation~\eqref{eq:Qmv} to each of its edges except $e^{(i)}$, then we can simplify the matrix Equation~\eqref{eq:mainmtx}.

For example, if $e^{(i)}$ is incident to two vertices that are in the same block of the vertex partition (or it is only incident to one vertex), then $\pack{K}_1$ cannot arise and $\phi^{(f_i)}_{(\eq,2)}=0$.  Similarly, we can deduce that if the boundary components adjacent to $e^{(i)}$ are in the same boundary partition (or if  $e^{(i)}$ is only adjacent to one boundary component), then $\pack{K}_2$ cannot arise and $\phi^{(f_i)}_{(\pl,2)}=0$.
If each $\pack{H}^{(f_i)}$ is orientable (i.e., corresponds to an orientable ribbon graph) and we set $c_{f} =0$ for every $f\in E(\pack{G})$ then, since $\pack{K}_5$ cannot arise, we have $\phi^{(f_i)}_{\x}=0$. 
In all of the above cases, we can simplify the matrix Equation~\eqref{eq:mainmv} by substituting in $\phi^{(f_i)}_{(\eq,2)}=0$, $\phi^{(f_i)}_{(\pl,2)}=0$ or $\phi^{(f_i)}_{\x}=0$ accordingly and removing the redundant rows.

A similar argument can be applied to Theorem~\ref{thm.main} to simplify the matrix equation~\eqref{eq.mainmat1} under the same conditions. In general, this technique can be used whenever you can deduce that a specific $\pack{K}_i$ cannot arise after resolving the other edges in $\pack{H}^{(f_i)}$.
\end{technique}

\section{Some special cases}\label{s:cases}

\subsection{The Bollob\'as--Riordan polynomial}\label{ss:brp}

The Bolloba\'s--Riordan polynomial~\cite{bollobasriordanpoly,zbMATH01801590} is probably the best-known embedded graph polynomial. For a ribbon graph $\rib{G}$ , the \emph{Bollob\'as--Riordan polynomial}, $R(\rib{G};x,y,z) \in \mathbb{Z}[x,y,z^{1/2}]$, is 
\[ R(\rib{G};x,y,z)= \sum_{A \subseteq E(\rib{G})}   (x-1)^{r( E ) - r( A )}   y^{|A|-r(A)} z^{\gamma(A)},\] 
where $r( A )$ is the rank of the ribbon graph $\rib{G}\ba(E-A)$ and $\gamma( A )$ its Euler genus. (Again we do not need the specifics of this definition, but include it for reference.)
As shown in~\cite[Lemma~4.1]{krushkalpoly} and~\cite[Theorem~5.1]{zbMATH06824436}, the Bollob\'as--Riordan polynomial can be recovered from the Krushkal polynomial 
\begin{equation}\label{eq.kbr}
R(\rib{G}; x+1,y,z)=y^{\frac{1}{2}\gamma(\rib{G})} K(G\subset\Sigma;x,y,yz^2,1/y),
\end{equation}
where $\rib{G}$ is a ribbon graph and $G\subset\Sigma$ its corresponding cellularly embeddded graph.

As with the Krushkal polynomial, there are no known deletion-contraction relations for the Bollob\'as--Riordan polynomial that apply to an arbitrary edge. It was shown in~\cite{KMT} that extending the Bollob\'as--Riordan polynomial to vertex partitioned arrow presentations results in a polynomial that does have deletion-contraction relations that apply to any edge. That polynomial was expressed in terms of a three-variable polynomial $T_{cps}$ of graphs embedded in pseudo-surfaces, or equivalently vertex partitioned arrow presentations, in~\cite[Definition~38]{HM}. 
We shall consider a polynomial equivalent to $T_{cps}$.

Let $\vp{G}$ be a vertex partitioned arrow presentation
 and $\pack{G}$ be any packaged arrow presentation obtained by partitioning the boundary components of $\vp{G}$.
 Set 
 \[ \Z(\vp{G}; a,b, \al,\be) = Z(\pack{G} ;a,b, \al,\be,1) . \]
 Note that the value of $\Z$ is independent of the choice of boundary partition in forming $\pack{G}$.
 
It follows from Equation~\eqref{eq.arz} that
\begin{equation}\label{eq.dgtshg}
\Z(\vp{G};a,b,\al,\be)
=\begin{cases}
a\,\Z(\vp{G}\ba e;a,b,\al,\be)+b\,\Z(\vp{G}\con e;a,b,\al,\be),
\quad\text{for any edge $e$;}
\\ \al^{|V(\ar{G})|}\be^{|\V|},
\quad \text{if $\vp{G}=(\ar{G},\V)$ is edgeless.}
\end{cases}
\end{equation}
By~\cite[Theorem~39]{HM}, it  follows that $T_{cps}$ is equivalent to $\Z$. (Alternatively use the fact that $T_{cps}(\pack{G},w,x,y)= T_{ps}(\vp{G},w,x,y,y)$ together with Equations~\eqref{eq.hdy} and~\eqref{eq.ztps2}.) Thus by giving a tensor product formula for $\Z$ we give one for $R(\rib{G};x+1,y,z)$.

\begin{remark}
Although we do not use it here, for reference, and using the notation in~\cite{HM}, we will specify how $\Z$ and  $R$ are related. 
If $\rib{G}$ is a ribbon graph with corresponding arrow presentation $\ar{G}$, $\vp{G}=(\ar{G},\V)$ where each vertex is in its own block in $\V$, and $\pack{G}=(\ar{G},\V,\B)$ where $\B$ is any boundary partition, 
 Equations~\eqref{eq.zkr} and~\eqref{eq.kbr} give that 
\begin{align*}
    R(\rib{G};x+1,y,z) &= y^{\frac{1}{2}\gamma(\vp{G})} K(\pack{G};x,y,yz^2,1/y)\\
    &=(1/xyz^2)^{k(\pack{G}/\V)}\sqrt{y}^{|E|-2r(\pack{G}/\V)}z^{v(\pack{G})-2r(\pack{G}/\V)}\\
    &\qquad Z(\vp{G}; 1/\sqrt{y}, \sqrt{yz^2}, 1/z, xyz^2,1).
\end{align*}
\end{remark}

 \medskip

As it is a specialisation of the Krushkal polynomial we can recover a tensor product formula for the extension $\Z$ of the Bollob\'as--Riordan polynomial from Theorem~\ref{thm.main}. To do so, for $\vp{G}$ a vertex partitioned arrow presentation, let $\pack{G}$ be any packaged arrow presentation obtained by  partitioning the boundary components of $\vp{G}$ and set
\begin{equation}\label{eq:brqpol}
    \Q(\vp{G}; a,b,c,x,\al,\be) = Q(\pack{G}; a,b,c,x,0,\al,\be,1).
\end{equation}
Note that the value of $\Q$ is independent of the choice in partitioning the boundary components.

\begin{theorem}\label{thm.mainBR}
Let $\vp{G}$ and $\vp{H}$ be vertex partitioned arrow presentations. Let $e$ be a fixed edge of $\vp{H}$ and for each edge $f$ in $\vp{G}$ let $\varphi_f$ be a coupling of $f$ and $e$, and $\bphi = \{\varphi_f\}_{f\in E(\vp{G})}$.
Then
\begin{equation}\label{eq:main3}
\Q(\vp{G}\ot_{\bphi} \vp{H}; a,b,c,x, \al,\be) = \Q(\vp{G}; \phi_{(\eq,2)},\phi_{\pl},\phi_{\x},\phi_{(\eq,1)}, \al,\be),
\end{equation}
where the $\phi$'s arise from the unique solution to
\begin{equation}\label{eq.mainmatBR1} \alpha\beta \left[ \begin{array}{cccc}
   \alpha\beta & 1 & 1 & \alpha  \\
    1 & \alpha & 1 & 1  \\
    1 & 1 & \alpha & 1  \\
    \alpha & 1 & 1 & \alpha  
\end{array}\right] 
\cdot \begin{bmatrix}
    \phi_{(\eq,2)} \\[2pt]
    \phi_{\pl} \\[2pt] 
    \phi_{\x}\\[2pt]
    \phi_{(\eq,1)} 
\end{bmatrix}
=\begin{bmatrix}
    \Q(\vp{H}\ba e; a,b,c,x,\al,\be)\\[2pt]
    \Q(\vp{H}\con e; a,b,c,x, \al,\be)\\[2pt]
    \Q(\vp{H}\pcon e; a,b,c,x,  \al,\be)\\[2pt]
    \Q(\vp{H}\mba e; a,b,c,x, \al,\be)
\end{bmatrix}.\end{equation}
\end{theorem}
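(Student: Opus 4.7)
The plan is to derive Theorem~\ref{thm.mainBR} as a specialization of Theorem~\ref{thm.main} at $y=0$ and $\gamma=1$. By~\eqref{eq:brqpol}, if $\pack{G}$ and $\pack{H}$ denote the packaged arrow presentations obtained from $\vp{G}$ and $\vp{H}$ by imposing the trivial boundary partitions $\{B(\ar{G})\}$ and $\{B(\ar{H})\}$, then $\Q(\vp{G};a,b,c,x,\alpha,\beta) = Q(\pack{G};a,b,c,x,0,\alpha,\beta,1)$, and similarly for $\vp{H}$ and for the tensor product (the value of $Q$ at $\gamma=1$ is insensitive to the choice of boundary partition, so any packaged model of $\vp{G}\ot_\bphi\vp{H}$ will compute the same $\Q$-value).

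First I would invoke Theorem~\ref{thm.main} applied to $\pack{G}\ot_{\bphi}\pack{H}$, and substitute $y=0$ and $\gamma=1$ throughout. The left-hand side of~\eqref{eq:main1} becomes the desired $\Q(\vp{G}\ot_{\bphi}\vp{H}; a,b,c,x,\alpha,\beta)$. The right-hand side has the form $Q(\pack{G}; \phi_{(\eq,2)}, \phi_{(\pl,2)}, \phi_{\x}, \phi_{(\eq,1)}, \phi_{(\pl,1)}, \alpha, \beta, 1)$, and if I can arrange $\phi_{(\pl,1)}=0$ then~\eqref{eq:brqpol} collapses this to $\Q(\vp{G}; \phi_{(\eq,2)}, \phi_{(\pl,2)}, \phi_{\x}, \phi_{(\eq,1)}, \alpha, \beta)$, exactly matching the form of~\eqref{eq:main3}.

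The key observation is that the matrix in~\eqref{eq.mainmat1} becomes singular at $\gamma=1$: rows $2$ and $5$ both become $(1,\alpha,1,1,\alpha)$, and columns $2$ and $5$ are identical as well. Moreover, because $\pack{H}$ has a single boundary class, the operations $\con$ and $\mcon$ produce the same packaged arrow presentation $\pack{H}\con e = \pack{H}\mcon e$ (the latter merges $[a]_\B$ with itself, which has no effect), so in particular $Q(\pack{H}\con e;\ldots,1) = Q(\pack{H}\mcon e;\ldots,1)$. The $5\times5$ system is therefore consistent and of rank $4$. This is exactly the scenario anticipated by Technique~\ref{r.phi0}: in the expansion of $\mvQ(\pack{H})$ used in the proof of Theorem~\ref{thm.mainmv}, the base case $\pack{K}_5$ is not distinguishable from $\pack{K}_2$ (they differ only in the boundary partition, which is irrelevant once $\gamma=1$), so we may take $\phi_{(\pl,1)}=0$, absorbing its contribution into $\phi_{(\pl,2)}$.

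With this choice, deleting the fifth row and column from the $\gamma=1$ specialization of~\eqref{eq.mainmat1}, and rewriting the remaining right-hand sides via~\eqref{eq:brqpol} as $\Q(\vp{H}\ba e)$, $\Q(\vp{H}\con e)$, $\Q(\vp{H}\pcon e)$, $\Q(\vp{H}\mba e)$, yields precisely~\eqref{eq.mainmatBR1}. The main technical check is that the resulting $4\times 4$ matrix is non-singular, so that the reduced system has a genuinely unique solution; this amounts to a short determinant computation and is the only routine obstacle.
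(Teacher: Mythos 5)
Your proposal is correct and follows essentially the same route as the paper: specialize Theorem~\ref{thm.main} via~\eqref{eq:brqpol}, observe that with a single boundary class $\Q(\vp{H}\con e)=\Q(\vp{H}\mcon e)$ so the $5\times5$ system degenerates, and collapse the contract/merge-contract pair to obtain the $4\times4$ system~\eqref{eq.mainmatBR1}. The only (immaterial) difference is which of the two redundant unknowns you set to zero — you take $\phi_{(\pl,1)}=0$ and keep $\phi_{(\pl,2)}$, while the paper invokes Technique~\ref{r.phi0} to set $\phi_{(\pl,2)}=0$ and relabels $\phi_{(\pl,1)}$ as $\phi_{\pl}$; these agree because $Q$ at $\gamma=1$ depends only on the sum of the contract and merge-contract weights.
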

\begin{proof}
Let $\pack{G}$ be a packaged arrow presentation obtained from $\vp{G}$ by partitioning the boundary components, and
$\pack{H}$ be the packaged arrow presentation obtained from  $\vp{H}$ such that its boundary partition contains precisely one block.
Using Equation~\eqref{eq:brqpol} and Theorem~\ref{thm.main}, we can obtain an expression for $\Q(\vp{G}\ot_{\bphi} \vp{H})$ in terms of $Q(\pack{G})$ with variables arising from $Q(\pack{H}\ba e)$, $Q(\pack{H}\con e)$, $Q(\pack{H}\pcon e)$, $Q(\pack{H}\mba e)$ and $Q(\pack{H}\mcon e)$. Due to our choice of $\pack{H}$, $Q(\pack{H}\con e)=Q(\pack{H}\mcon e)$ and as observed in Technique~\ref{r.phi0}, $\phi_{(\pl,2)}=0$. So we simplify the matrix equation~\eqref{eq.mainmat1} accordingly, relabel $\phi_{(\pl,1)}$ as $\phi_{\pl}$, reorder the rows where appropriate, and use Equation~\eqref{eq:brqpol} to obtain the result. 
\end{proof}

The following is immediate from setting $c=x=0$ in Theorem~\ref{thm.mainBR}.

\begin{corollary}\label{cor.hmthm2ainBR}
Let $\vp{G}$ and $\vp{H}$ be the vertex partitioned arrow presentation obtained from $\vp{G}$ and $\vp{H}$, respectively, by placing each vertex in its own block of the partition. 
 Let $e$ be a fixed edge of $\vp{H}$ and for each edge $f$ in $\vp{G}$ let $\varphi_f$ be a coupling of $f$ and $e$, and $\bphi = \{\varphi_f\}_{f\in E(\vp{G})}$.
Then
\begin{equation}\label{eq:main4a}
\Z(\vp{G}\ot_{\bphi} \vp{H}; a,b, \al,\be) = \Q(\vp{G}; \phi_{(\eq,2)},\phi_{\pl},\phi_{\x},\phi_{(\eq,1)}, \al,\be)
\end{equation}
where the $\phi$'s arise from the unique solution to
\begin{equation}\label{eq.mainmatBR2} \alpha\beta \left[ \begin{array}{cccc}
   \alpha\beta & 1 & 1 & \alpha \\
    1 & \alpha\gamma & 1 & 1   \\
    1 & 1 & \alpha & 1  \\
    \alpha & 1 & 1 & \alpha 
\end{array}\right] 
\cdot \begin{bmatrix}
    \phi_{(\eq,2)} \\[2pt]
    \phi_{\pl} \\[2pt]
    \phi_{\x}\\[2pt]
    \phi_{(\eq,1)} 
\end{bmatrix}
=\begin{bmatrix}
    \Z(\vp{H}\ba e; a,b ,\al,\be)\\[2pt]
    \Z(\vp{H}\con e; a,b, \al,\be)\\[2pt]
    \Z(\vp{H}\pcon e; a,b, \al,\be)\\[2pt]
    \Z(\vp{H}\mba e; a,b, \al,\be)\\[2pt]
\end{bmatrix}.\end{equation}
\end{corollary}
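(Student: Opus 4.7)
The corollary, as the authors indicate, follows immediately from Theorem~\ref{thm.mainBR} by specializing $c = x = 0$. My plan is to verify this specialization carefully in three short steps.

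First, I would establish the identity
\[
\Z(\vp{G}; a, b, \al, \be) = \Q(\vp{G}; a, b, 0, 0, \al, \be),
\]
valid for every vertex partitioned arrow presentation $\vp{G}$. This follows by comparing~\eqref{eq.arz} and~\eqref{eq:Q}: setting $c = x = y = 0$ in the recurrence for $Q$ yields exactly the recurrence for $Z$, with identical edgeless base cases, so $Z(\cdot; a, b, \al, \be, \ga) = Q(\cdot; a, b, 0, 0, 0, \al, \be, \ga)$. Passing to the vertex partitioned setting through the definitions $\Z(\vp{G}) = Z(\pack{G}; \cdot, 1)$ and $\Q(\vp{G}) = Q(\pack{G}; \cdot, 0, \cdot, 1)$, for any packaged arrow presentation $\pack{G}$ extending $\vp{G}$, gives the displayed identity.

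Next, I would substitute $c = x = 0$ into~\eqref{eq:main3} and~\eqref{eq.mainmatBR1}. On the left of~\eqref{eq:main3}, $\Q(\vp{G}\ot_{\bphi}\vp{H}; a, b, 0, 0, \al, \be)$ becomes $\Z(\vp{G}\ot_{\bphi}\vp{H}; a, b, \al, \be)$, yielding the left-hand side of~\eqref{eq:main4a}. In the matrix system~\eqref{eq.mainmatBR1}, the $4\times 4$ coefficient matrix depends only on $\al$ and $\be$ and is unaffected. Each entry of the right-hand column vector becomes $\Q(\vp{H}\cdot e; a, b, 0, 0, \al, \be) = \Z(\vp{H}\cdot e; a, b, \al, \be)$ by the identity above, yielding~\eqref{eq.mainmatBR2}. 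Since the matrix is invertible, the $\phi$'s are uniquely determined by this specialized system and coincide with the $c = x = 0$ specializations of the $\phi$'s in Theorem~\ref{thm.mainBR}.

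The only subtle point—not a real obstacle—is that on the right-hand side of~\eqref{eq:main3}, the expression $\Q(\vp{G}; \phi_{(\eq,2)}, \phi_{\pl}, \phi_{\x}, \phi_{(\eq,1)}, \al, \be)$ cannot be replaced by $\Z(\vp{G}; \ldots)$. The $\phi$'s occupy the $c$ and $x$ argument positions of $\Q$ and are generically nonzero functions of $a, b, \al, \be$, so the specialization that collapses $\Q$ to $\Z$ is not available here. This mirrors the phenomenon discussed in Remark~\ref{r.whyq}: the more refined polynomial $\Q$ (respectively $Q$) is required on the right-hand side of such tensor product formulas, even when the left-hand side is a $\Z$-polynomial.
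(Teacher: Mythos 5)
Your proposal is correct and follows exactly the route the paper takes: the paper's entire justification is that the corollary ``is immediate from setting $c=x=0$ in Theorem~\ref{thm.mainBR}'', and your three steps simply make explicit the verification that $\Q(\cdot;a,b,0,0,\al,\be)=\Z(\cdot;a,b,\al,\be)$ (via comparing the recurrences~\eqref{eq.arz} and~\eqref{eq:Q} and the base cases) on the left-hand side and in each entry of the right-hand column vector, while the coefficient matrix is untouched. Your closing observation that the right-hand side cannot be collapsed from $\Q$ to $\Z$ because the $\phi$'s occupy the $c$ and $x$ argument slots is exactly the point of Remark~\ref{r.whyq}, so nothing is missing.
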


\medskip

We will now indicate how to recover Huggett and Moffatt's tensor product formula~\cite[Theorem~4.3]{MR2854787} (which is stated as Corollary~\ref{cor.hmthm2} below) from Corollary~\ref{thm.mainBR}. 
Let $\ar{G}$ be an arrow presentation, and $\mathbf{b}=\{b_e: e\in E(\ar{G})\}$ be a set of indeterminates indexed by the edges of $\ar{G}$. 
The \emph{multivariate Bollob\'as--Riordan polynomial}~\cite{zbMATH05216495} is defined by
\[ \mvZ(\ar{G};a,\mathbf{b},c)= \sum_{A\subseteq E(\ar{G})} a^{k(A)} \Big( \prod_{e\in A} b_e \Big)c^{b(A)},\]
where $k(A)$ is the number of connected components of (the ribbon graph corresponding to) $\ar{G}\ba (E(\ar{G})-A)$ and $b(A)$ its number of boundary components. 

For convenience write $\mvW(\pack{G})$ for $Q(\pack{G}; \{(1,0,0,0,b_e)\}_{e\in E(\ar{G})}, c,a,1)$, where $\pack{G}$ is a packaged arrow presentation. Proposition~\ref{p.qmvdef} gives that 
\[
\mvW(\pack{G})
=\begin{cases}
\mvW(\pack{G}\ba e)+b_e\mvW(\pack{G}\mcon e) &
\text{ for any edge $e$,}
\\ a^{|\V|} c^{|V(\ar{G})|}
& \text{if $\ar{G}$ is edgeless.} 
\end{cases}
\]
By making use of Equation~\eqref{eq.mvqstate}, 
\begin{equation}\label{eq.hdyv}
\mvW(\pack{G}) = \sum_{A\subseteq E(\pack{G})} \big(\prod_{e\in A} b_e\big) c^{|V( \ar{G}\mcon A \ba (E(\pack{G})-A))| } a^{|\V(\pack{G}\mcon A \ba (E(\pack{G})-A))|}.\end{equation}
Thus if $\ar{G}$ is an arrow presentation and $\pack{G} = (\ar{G},\V,\B)$ where $\V$ places each vertex in its own block, and $\B$ is any boundary partition, we see that 
 $|V( \ar{G}\mcon A \ba (E(\pack{G})-A))| = b(\ar{G}\ba (E(\pack{G})-A)))$ since contraction does not change the number of boundary components, and it is readily seen that   
$|\V(\pack{G}\mcon A \ba (E(\pack{G})-A))| = k(\ar{G}\ba (E(\pack{G})-A))$. 
Thus Equation~\eqref{eq.hdyv} and the definition of $\mvW(\pack{G})$ gives that 
\begin{equation}\label{eq.zplane}
    \mvZ(\ar{G};a,\mathbf{b},c) =
\mvQ(\pack{G}; \{(1,0,0,0,b_e)\}_{e\in E(\ar{G})}, c,a,1)
.
\end{equation}
With this observation we can now recover~\cite[Theorem~4.3]{MR2854787} as a corollary of Theorem~\ref{thm.mainmv} as follows.
\begin{corollary}\label{cor.hmthm2}
Let $\ar{G}$ be an arrow presentation that describes an orientable ribbon graph, and let $\ar{H}$ be an arrow presentation that describes a plane ribbon graph.
Let $e$ be a fixed edge of $\ar{H}$ and for each edge $f$ in $\ar{G}$ let $\varphi_f$ be a coupling of $f$ and $e$, and $\bphi = \{\varphi_f\}_{f\in E(\ar{G})}$.
Then
\begin{equation}\label{eq:main4b}
\mvZ(\ar{G}\ot_{\bphi} \ar{H}; a,b,c) = (ac)^{-e(\ar{G)}} \Big(\prod_{e\in E(\ar{G})} g_e \Big) \mvZ(\ar{G}; a, \{f_e/g_e\}_{e\in E(\ar{G})}, c),
\end{equation}
where the $f_i$'s and $g_i$'s arise as the unique solutions to
\begin{align*}
acg_e + f_e & = \mvZ( \ar{H} \ba e; a,b,c)\\
g_e + cf_e & = \mvZ( \ar{H} \ba e; a,b,c).
\end{align*}
\end{corollary}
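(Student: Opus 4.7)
The plan is to derive the formula by applying the general multivariate tensor product formula (Corollary~\ref{cor.fulltens}) to suitable packaged versions of $\ar{G}$ and $\ar{H}$, collapsing the resulting $5\times 5$ system~\eqref{eq:mainmtx} to a $2\times 2$ system via the planarity of $\ar{H}$ and orientability of $\ar{G}$ using Technique~\ref{r.phi0}, and translating between $\mvQ$ and $\mvZ$ via the specialisation $(a_f, b_f, c_f, x_f, y_f) = (1, 0, 0, 0, b_f)$ and $(\alpha, \beta, \gamma) = (c, a, 1)$ supplied by Equation~\eqref{eq.zplane}.

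First I would package $\ar{G}$ and $\ar{H}$ into $\pack{G}$ and $\pack{H}$ by equipping them with singleton vertex partitions and single-block boundary partitions, and apply Corollary~\ref{cor.fulltens} to obtain $\mvQ(\pack{G} \otimes_{\bphi} \pack{H}) = \mvQ(\pack{G}; \bw')$ where each edge $f$ of $\ar{G}$ carries the $\phi$-tuple $(\phi^{(f)}_{(\eq,2)}, \phi^{(f)}_{(\pl,2)}, \phi^{(f)}_{\x}, \phi^{(f)}_{(\eq,1)}, \phi^{(f)}_{(\pl,1)})$. Since only the first and fifth entries of the BR tuple $(1, 0, 0, 0, b_e)$ are nonzero, only deletion and merge-contraction come into play when expanding $\pack{H}$ via~\eqref{eq:Qmv} over edges other than $e$. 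Both operations preserve planarity, hence the non-orientable single-edge reduction $\pack{K}_3$ cannot arise, yielding $\phi^{(f)}_{\x} = 0$. Planarity and the chosen single-block boundary partition further constrain how these operations act on the boundary and vertex partitions so that $\pack{K}_2$ and $\pack{K}_4$ are likewise excluded, forcing $\phi^{(f)}_{(\pl,2)} = \phi^{(f)}_{(\eq,1)} = 0$ and leaving only $\pack{K}_1$ and $\pack{K}_5$ as possible terminal configurations.

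Substituting $(\alpha, \beta, \gamma) = (c, a, 1)$ into the surviving $2\times 2$ subsystem of~\eqref{eq:mainmtx} (rows and columns one and five) and setting $g_e := ac\,\phi^{(f_e)}_{(\eq,2)}$, $f_e := ac\,\phi^{(f_e)}_{(\pl,1)}$ reproduces precisely the two equations $acg_e + f_e = \mvZ(\ar{H}\ba e; a, b, c)$ and $g_e + cf_e = \mvZ(\ar{H}/e; a, b, c)$, where Equation~\eqref{eq.zplane} is used to identify $\mvQ(\pack{H}\ba e)$ and $\mvQ(\pack{H}\mcon e)$ with their BR counterparts. Multiplicativity of $\mvQ$ in each edge-tuple then allows me to factor $(ac)^{-e(\ar{G})}\prod_e g_e$ out of $\mvQ(\pack{G}; \bw')$, and a final application of Equation~\eqref{eq.zplane} to $\mvQ(\pack{G}; \{(1, 0, 0, 0, f_e/g_e)\})$ converts the residual into $\mvZ(\ar{G}; a, \{f_e/g_e\}, c)$, delivering the statement.

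The hardest step will be the argument that for plane $\ar{H}$ the reduction of $\pack{H}$ modulo $e$ via delete and merge-contract can only terminate at $\pack{K}_1$ or $\pack{K}_5$; this requires tracking how these operations evolve the boundary partition of a plane ribbon graph and showing, essentially by a planar-duality argument, that the two boundary components adjacent to any edge of a plane graph must be identified after all other edges are reduced. A secondary subtlety is the reconciliation of the non-singleton vertex partition produced at each 2-sum site in $\pack{G} \otimes_{\bphi} \pack{H}$ with the singleton vertex partition required for Equation~\eqref{eq.zplane} to apply to the left-hand side of~\eqref{eq:main4b}: this mismatch is precisely what gives rise to the normalising factor $(ac)^{-e(\ar{G})}$ in the final formula, and the bookkeeping must be done carefully enough to verify that all auxiliary rows of~\eqref{eq:mainmtx} not explicitly used in the $2\times 2$ reduction are automatically satisfied.
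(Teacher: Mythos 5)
Your proposal is correct and follows the same overall strategy as the paper's proof (apply the general tensor product theorem to packaged versions of $\ar{G}$ and $\ar{H}$, use Technique~\ref{r.phi0} plus planarity to kill three of the five $\phi$'s, and translate via Equation~\eqref{eq.zplane}), but it deviates in one genuine choice: you give $\pack{H}$ a single-block boundary partition, whereas the paper puts each boundary component of $\pack{H}$ in its own block. This swaps which pair of one-edge reductions survives: you retain $\pack{K}_1$ and $\pack{K}_5$ (delete and merge-contract), so the surviving weights land in exactly the slots that Equation~\eqref{eq.zplane} reads, making the final conversion of $\mvQ(\pack{G};\bw')$ into $\mvZ(\ar{G};a,\{f_e/g_e\},c)$ immediate; the paper retains $\pack{K}_1$ and $\pack{K}_2$ (delete and contract) and implicitly uses that $\con$ and $\mcon$ are indistinguishable once $\ga=1$. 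Because the relevant $2\times 2$ submatrices of~\eqref{eq:mainmtx} coincide at $\ga=1$ and $\mvQ(\pack{H}\con e)=\mvQ(\pack{H}\mcon e)$ there, both routes produce the identical system for $f_e,g_e$ and the same formula. Your route is also marginally more economical on the topological input: excluding $\pack{K}_2$ is trivial from the single-block boundary partition via Technique~\ref{r.phi0}, so the only nontrivial planarity argument you need is the one excluding $\pack{K}_4$ (that in a plane ribbon graph, merge-contracting a set $A$ connecting the endpoints of $e$ necessarily turns $e$ into a loop), whereas the paper needs both this and its planar-dual statement to exclude $\pack{K}_5$.

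Two small inaccuracies in your write-up, neither fatal. First, the factor $(ac)^{-e(\ar{G})}$ does not come from the mismatch between the vertex partition of $\pack{G}\ot_{\bphi}\pack{H}$ and the singleton partition needed for Equation~\eqref{eq.zplane} (for a non-loop $e$ the 2-sums in fact produce singleton classes, so~\eqref{eq.zplane} applies to the left-hand side directly); it comes from the prefactor $\al\be\ga=ac$ in~\eqref{eq:mainmtx}, i.e.\ from your own rescaling $g_e=ac\,\phi_{(\eq,2)}$, $f_e=ac\,\phi_{(\pl,1)}$, exactly as your computation already shows. Second, your stated ``hardest step'' (that the two boundary components adjacent to $e$ end up identified) is automatic in your setup, since all boundary components start in one block and blocks only coarsen; the genuinely hard step is the vertex-side exclusion of $\pack{K}_4$, which is where planarity is actually consumed.
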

\begin{proof}
Let $\pack{G}$ be a packaged arrow presentation obtained from $\ar{G}$, and $\pack{H}$  from $\ar{H}$, in which  each vertex and each boundary component is in a block of size one. Using Theorem~\ref{thm.mainmv} and Equation~\eqref{eq.zplane}, we can obtain an expression for $\mvZ(\ar{G}\ot_{\bphi}\ar{H})$ in terms of $\mvQ(\pack{G})$ with variables arising from $\mvQ(\pack{H}\ba e)$, $\mvQ(\pack{H}\con e)$, $\mvQ(\pack{H}\pcon e)$, $\mvQ(\pack{H}\mba e)$ and $\mvQ(\pack{H}\mcon e)$. 
As $\ar{H}$ is plane and the partitions of $\pack{H}$ only contain blocks of size one, applying the methods in Technique~\ref{r.phi0} we can deduce that $\phi_{\x}=0$, $\phi_{(\eq,1)}=0$ and $\phi_{(\pl,1)}=0$.
We simplify matrix equation~\eqref{eq.mainmat1} accordingly, rename $\phi_{(=,2)}$ as $g_e$ and $\phi_{(\pl,2)}$ as $f_e$ and use Equation~\eqref{eq.zplane} to obtain the result. 
\end{proof}

We highlight, in particular that~\cite[Corollary~4.3]{MR2854787} applies the above result to find a tensor product formula for the Bollob\'as--Riordan polynomial (with the same orientability and planarity conditions). Thus Theorem~\ref{thm.mainBR} is strictly stronger than Huggett and Moffatt's result for the Bollob\'as--Riordan polynomial.

\subsection{The topological transition  and ribbon graph polynomials}

 The \emph{topological transition polynomial},  introduced in \cite{zbMATH06024136,zbMATH05937085}, is a multivariate polynomial of ribbon graphs,or equivalently arrow presentations. It contains both the 2-variable version of Bollob\'as and Riordan's ribbon graph polynomial (described below) and the Penrose polynomial~\cite{MR1428870,MR2994409} as specializations, and is intimately related to  Jaeger's transition polynomial~\cite{MR1096990} and the 
 generalized transition polynomial of 
 \cite{MR1980048}.

Let $\ar{G}$ be an arrow presentation, let $(\textbf{a},\textbf{b},\textbf{c})=\{(a_e,b_e,c_e)\}_{e\in E(\ar{G})}$ be an indexed family  of triples of indeterminates, and $t$  be another indeterminate. Then, as shown in~\cite{zbMATH06024136} the
 \emph{topological transition polynomial}, $\tQ(\ar{G};(\textbf{a},\textbf{b},\textbf{c}),t)$, is defined by the recursion relation
\begin{equation}\label{newdctopt}
\tQ(\ar{G})=
\begin{cases}
a_e \, \tQ(\ar{G}\con e)+b_e \,\tQ( \ar{G}\ba e)+c_e\, \tQ(\ar{G}\pcon e), &\text{for any edge $e$;}
\\
t^{p} &\text{if $\ar{G}$ is edgeless with $p$ vertices};
\end{cases}
\end{equation}
where we have written $\tQ(\ar{G})$ for $\tQ(\ar{G};(\textbf{a},\textbf{b},\textbf{c}),t)$ in the recursion relation. (Note the different order of the deletion and contraction terms in Equations~\eqref{p.qmvdef} and~\eqref{newdctopt}.)

Observe that if  $\pack{G}$ is any packaged arrow presentation obtained by partitioning the vertices and boundary components of $\ar{G}$ then
\begin{equation}\label{eq.dob}
\tQ(\ar{G};(\textbf{a},\textbf{b},\textbf{c}),t)  = 
\mvQ(\pack{G}; \bw, t,1,1), 
\end{equation}
where $\bw=\{(b_e,a_e,c_e,0,0) :e\in E(\pack{G}) \}$.

The following result extends \cite[Theorem 28]{moffatt2023} and \cite[Theorem~6.1]{EllisMoff}.
\begin{theorem}\label{th.dajkl}
    Let $\ar{G}$, $\{\ar{H}^{(f)}\}_{f\in E(\ar{G})}$,  $e^{(f)}$ and $\bphi$ be as in Definition~\ref{d.argenten}.
Then 
\[\tQ(\ar{G}\ot_{\bphi} \{\ar{H}^{(f)}\}_{f\in E(\ar{G})}; (\textbf{a},\textbf{b},\textbf{c}),t)
= \tQ(\ar{G};( \boldsymbol{\phi}_{\pl} ,\boldsymbol{\phi}_{\eq}, \boldsymbol{\phi}_{\x}), t),\]
where 
$( \boldsymbol{\phi}_{\pl} ,\boldsymbol{\phi}_{\eq}, \boldsymbol{\phi}_{\x}) = \{  ( \phi^{(f)}_{\pl} ,\phi^{(f)}_{\eq}, \phi^{(f)}_{\x}) : f\in E(\ar{G}) ) \}$,  
and for each  $f$ we have that $\phi^{(f)}_{\eq}$, $\phi^{(f)}_{\pl}$ and $\phi^{(f)}_{\x}$  are given by 
  \begin{equation}
    \left[ \begin{array}{ccc}
   t^2& t  & t  \\
    t & t^2 & t   \\
    t & t & t^2  \\
\end{array}\right] 
\cdot \begin{bmatrix}
    \phi^{(f)}_{\pl} \\[2pt]
    \phi^{(f)}_{\eq} \\[2pt]
    \phi^{(f)}_{\x}
\end{bmatrix}
=\begin{bmatrix}
    \tQ(\ar{H}^{(f)}\con e; (\textbf{a},\textbf{b},\textbf{c}),t)\\[2pt]
   \tQ(\ar{H}^{(f)}\ba e; (\textbf{a},\textbf{b},\textbf{c}) ,t)\\[2pt]
    \tQ(\ar{H}^{(f)}\pcon e; (\textbf{a},\textbf{b},\textbf{c}) , t)
\end{bmatrix}.
\end{equation}
\end{theorem}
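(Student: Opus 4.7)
The plan is to derive Theorem~\ref{th.dajkl} from Corollary~\ref{cor.fulltens} using the bridging identity~\eqref{eq.dob}, which realizes $\tQ$ as a specialization of $\mvQ$ on any packaging of an arrow presentation. First I would realize the tensor product at the packaged level: choose any packagings $\pack{G}$ and $\pack{H}^{(f)}$ of $\ar{G}$ and $\ar{H}^{(f)}$ (for definiteness, with every vertex and boundary component in a singleton block). Then $\pack{G}\ot_{\bphi}\{\pack{H}^{(f)}\}$ has underlying arrow presentation $\ar{G}\ot_{\bphi}\{\ar{H}^{(f)}\}$, so at $\alpha=t$, $\beta=\gamma=1$ and with weights $\bw=\{(b_e,a_e,c_e,0,0)\}$ on every edge, identity~\eqref{eq.dob} translates the statement of Theorem~\ref{th.dajkl} into an equality
\[
\mvQ(\pack{G}\ot_{\bphi}\{\pack{H}^{(f)}\}; \bw, t,1,1)=\mvQ(\pack{G}; \bw', t,1,1)
\]
for a suitable $\bw'$, which is exactly what Corollary~\ref{cor.fulltens} supplies.

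Next I would analyze the $5\times 5$ matrix equation~\eqref{eq:mainmtx} at this specialization. Two things happen in tandem. On the left, rows $1$ and $4$ of the matrix coincide and rows $2$ and $5$ coincide, so the matrix collapses to rank~$3$. On the right, the values $\mvQ(\pack{H}^{(f)}\ba e;\bw,t,1,1)$ and $\mvQ(\pack{H}^{(f)}\mba e;\bw,t,1,1)$ agree (and similarly for $\con$ vs.\ $\mcon$), because with $x_e=y_e=0$ and $\beta=\gamma=1$ the polynomial $\mvQ$ depends only on the underlying arrow presentation, by~\eqref{eq.mvqstate} and~\eqref{eq.dob}. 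So the specialized system is consistent but underdetermined, with a two-parameter family of solutions.

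The resolution is Technique~\ref{r.phi0} applied to $\pack{G}$ itself: since $x_f=y_f=0$ for every edge $f$, the merge-delete and merge-contract branches of the recursion~\eqref{eq:Qmv} do not contribute to $\mvQ(\pack{G};\bw',t,1,1)$, so one may take $\phi^{(f)}_{(\eq,1)}=\phi^{(f)}_{(\pl,1)}=0$ without changing the equality. Dropping the corresponding columns and the now-redundant rows $4$ and $5$ leaves the $3\times 3$ system
\[
t\begin{pmatrix} t & 1 & 1 \\ 1 & t & 1 \\ 1 & 1 & t \end{pmatrix}
\begin{pmatrix} \phi^{(f)}_{(\eq,2)} \\ \phi^{(f)}_{(\pl,2)} \\ \phi^{(f)}_{\x} \end{pmatrix}
=\begin{pmatrix} \mvQ(\pack{H}^{(f)}\ba e;\bw,t,1,1) \\ \mvQ(\pack{H}^{(f)}\con e;\bw,t,1,1) \\ \mvQ(\pack{H}^{(f)}\pcon e;\bw,t,1,1) \end{pmatrix}.
\]
Applying~\eqref{eq.dob} rewrites the right-hand side as the transition-polynomial column in the statement, and swapping the first two rows together with the first two columns (the matrix being symmetric under this swap) together with the relabellings $\phi^{(f)}_{(\pl,2)}\mapsto\phi^{(f)}_{\pl}$, $\phi^{(f)}_{(\eq,2)}\mapsto\phi^{(f)}_{\eq}$ produces the system displayed in Theorem~\ref{th.dajkl}. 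One more application of~\eqref{eq.dob}, this time to the left-hand side, yields $\mvQ(\pack{G};\bw',t,1,1)=\tQ(\ar{G};(\boldsymbol{\phi}_{\pl},\boldsymbol{\phi}_{\eq},\boldsymbol{\phi}_{\x}),t)$, finishing the derivation.

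The main obstacle will be articulating cleanly, and only once, the double role played by the specialization $\beta=\gamma=1$, $x_e=y_e=0$: it is simultaneously what allows us to pass between $\mvQ$ and $\tQ$ via~\eqref{eq.dob}, what forces the specialized $5\times 5$ matrix to become degenerate, and what licenses the particular choice of zero values for the two free parameters in the underdetermined system. Each of these three facets is an instance of the same underlying observation that merge operations are invisible at this specialization, and bundling them into a single transparent argument is the delicate part of the write-up.
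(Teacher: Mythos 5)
Your overall route (specialize Theorem~\ref{thm.mainmv}/Corollary~\ref{cor.fulltens} through the bridge~\eqref{eq.dob}) is the paper's route, but your choice of packaging creates a problem that the paper's choice avoids, and your resolution of that problem is not correctly justified. The crux is the step ``one may take $\phi^{(f)}_{(\eq,1)}=\phi^{(f)}_{(\pl,1)}=0$ without changing the equality.'' Your justification --- Technique~\ref{r.phi0} ``applied to $\pack{G}$ itself,'' on the grounds that $x_f=y_f=0$ kills the merge branches --- does not work. In $\mvQ(\pack{G};\bw',t,1,1)$ the merge-delete and merge-contract weights are precisely $\phi^{(f)}_{(\eq,1)}$ and $\phi^{(f)}_{(\pl,1)}$, not $x_f$ and $y_f$, so the hypothesis $x_f=y_f=0$ says nothing about them; the argument as written assumes what it is trying to prove. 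Nor does Technique~\ref{r.phi0} give these vanishings combinatorially: with your discrete (singleton-block) partitions, contracting the other edges of $\pack{H}^{(f)}$ can leave the two ends of $e^{(f)}$ in the same vertex block, so $\pack{K}_4$ and $\pack{K}_5$ genuinely arise and the combinatorially defined $\phi^{(f)}_{(\eq,1)},\phi^{(f)}_{(\pl,1)}$ need not be zero. What you actually need, and do not supply, is the statement that $\mvQ(\pack{G};\cdot\,,t,1,1)$ is constant on the whole solution set of the degenerate system; this is true because at $\beta=1$ the polynomial is independent of the vertex partition (so $\mvQ(\cdot\ba f)=\mvQ(\cdot\mba f)$) and at $\gamma=1$ it is independent of the boundary partition (so $\mvQ(\cdot\con f)=\mvQ(\cdot\mcon f)$), hence the kernel vectors $(1,0,0,-1,0)$ and $(0,1,0,0,-1)$ of the specialized matrix act trivially on $\mvQ(\pack{G};\bw')$.

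The paper sidesteps all of this by taking the opposite packaging: \emph{all} vertices in one block and \emph{all} boundary components in one block. Then Technique~\ref{r.phi0} applies for real --- $\pack{K}_1$ and $\pack{K}_2$ cannot arise, so $\phi^{(f)}_{(\eq,2)}=\phi^{(f)}_{(\pl,2)}=0$ combinatorially --- and moreover $\pack{H}^{(f)}\ba e=\pack{H}^{(f)}\mba e$ and $\pack{H}^{(f)}\con e=\pack{H}^{(f)}\mcon e$ as packaged arrow presentations, so the $5\times5$ system collapses honestly to the nonsingular $3\times3$ system in the statement, with the identifications $\phi^{(f)}_{(\eq,1)}\mapsto\phi^{(f)}_{\eq}$ and $\phi^{(f)}_{(\pl,1)}\mapsto\phi^{(f)}_{\pl}$ rather than your $\phi^{(f)}_{(\eq,2)},\phi^{(f)}_{(\pl,2)}$. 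Either switch to that packaging or insert the kernel argument above; as written, the proof has a gap at its central step.
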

\begin{proof}
Let $\pack{G}$ and each $\pack{H}^{(f)} $ be the packaged arrow presentations consisting of $\ar{G}$ and the $\ar{H}^{(f)} $ with all the partitions containing precisely one block. Then the result follows from Corollary~\ref{cor.fulltens} upon noting that the $\phi^{(f)}_{(\eq,2)}$ and $\phi^{(f)}_{(\pl,2)}$ are zero, and that $\ar{H}^{(f)}\ba e = \ar{H}^{(f)}\mba e$ and $\ar{H}^{(f)}\con e = \ar{H}^{(f)}\mcon e$.
\end{proof}

\begin{remark}\label{rem.hugg}
We note that as was detailed for the proof of~\cite[Corollary~6.3]{EllisMoff}, if $\ar{G}$ and $\ar{H}$ represent orientable ribbon graphs and if we set each $c_e=0$ throughout Theorem~\ref{th.dajkl}, then $\phi_{\x}=0$ and so the  $\Q(\ar{H}\pcon e)$ need not be considered. This results in~\cite[Theorem~4.3]{MR2854787}, restated here as Corollary~\ref{cor.hmthm2}.
\end{remark}

\subsection{The Tutte polynomial}
We conclude by indicating how to recover Brylawski's original formula~\eqref{btf} from Theorem~\ref{thm.main}. 
Let $G=(V,E)$ be a graph, Then set
\[ \dZ(G;a,b,c) = \sum_{A\subseteq E} a^{k(A)}b^{|A|}c^{|E-A|},  \]
where $k(A)$ is the number of connected components of the spanning subgraphs $(V,A)$ of $G$.
This polynomial satisfies the deletion-contraction formula
\[
\dZ(G) =
\begin{cases} c \dZ(G\ba e) +b \dZ(G\con e) &\text{for any edge $e$ of $G$,}
\\
a^{|V|} & \text{when $G$ is edgeless with $|V|$ vertices.}
\end{cases}
\]
Here $G\ba e$ and $G\con e$ are the usual graph deletion and contraction operations, and $\dZ(G)$  denotes $\dZ(G;a,b,c)$.

Next let $\ar{G}$ be any arrow presentation that represents any embedding of $G$. (So the vertices and edges of $G$ correspond to the vertices and edges of $\ar{G}$, and for each edge in $G$ place $e$-labelled arrows on circles of the arrow presentation corresponding to the ends of that edge.) Let $\V$ be the vertex partition of $\ar{G}$ that places each vertex in its own block, and let $\B$ be any boundary partition of $\ar{G}$. Finally let $\pack{G}=(\ar{G},\V,\B)$. For a graph $H$, construct some $\pack{H}$ similarly.
It is easily seen that $\pack{G}\otimes_{\bphi} \pack{H}$ represents a tensor product of graphs $G\otimes H$ analogously (with the tensor product acting on the same edge).

With $G$ and $\ar{G}$ as above, we have
\[
\dZ(G;a,b,c) = Q(\pack{G}; c,0,0,0,b,1,a,1).
\]
This follows from the observation that the vertex partition of $\pack{G} \mcon e$ (and also of $\pack{G} \mba e$) records when the two vertices of $G$ are merged under contraction.

An application of Theorem~\ref{thm.main}, after some rewriting of the system of equations,  gives that
\[ \dZ(G\otimes H;a,b,1) = \dZ(G;a, f,g),  \]
where $f$ and $g$ arise from the unique solution to
\begin{align*}
    af + a^2g &= \dZ(H \ba e; a,b,1),\\
    af + ag &= \dZ(H\con e;a,b,1).
\end{align*}

This is exactly the result displayed in \cite[Corollary~3.7]{MR2854787}. The proof of Corollary~3.9 of that reference then shows how to rewrite the above formula to obtain Brylawski's tensor product formula~\eqref{btf}.

\section*{Declarations}

For the purpose of open access, the authors have applied a Creative Commons
Attribution (CC BY) licence to any Author Accepted Manuscript version arising. No underlying data is associated with this article.

\section*{Open access and research data}
For the purpose of open access, the authors have applied a Creative Commons Attribution
(CC BY) licence to any Author Accepted Manuscript version arising. No underlying data is
associated with this article. There are no conflicts of interest.

\bibliographystyle{abbrv}
\bibliography{tensor.bib}

\begin{thebibliography}{10}

\bibitem{MR1428870}
M.~Aigner.
\newblock The {P}enrose polynomial of a plane graph.
\newblock {\em Math. Ann.}, 307(2):173--189, 1997.

\bibitem{zbMATH06127547}
R.~Askanazi, S.~Chmutov, C.~Estill, J.~Michel, and P.~Stollenwerk.
\newblock Polynomial invariants of graphs on surfaces.
\newblock {\em Quantum Topol.}, 4(1):77--90, 2013.

\bibitem{bollobasriordanpoly}
B.~Bollob{\'a}s and O.~Riordan.
\newblock A polynomial invariant of graphs on orientable surfaces.
\newblock {\em Proc. Lond. Math. Soc. (3)}, 83(3):513--531, 2001.

\bibitem{zbMATH01801590}
B.~Bollob{\'a}s and O.~Riordan.
\newblock A polynomial of graphs on surfaces.
\newblock {\em Math. Ann.}, 323(1):81--96, 2002.

\bibitem{MR863010}
T.~Brylawski.
\newblock The {T}utte polynomial. {I}. {G}eneral theory.
\newblock In {\em Matroid theory and its applications}, pages 125--275.
  Liguori, Naples, 1982.

\bibitem{zbMATH06824436}
C.~Butler.
\newblock A quasi-tree expansion of the {Krushkal} polynomial.
\newblock {\em Adv. Appl. Math.}, 94:3--22, 2018.

\bibitem{MR2507944}
S.~Chmutov.
\newblock Generalized duality for graphs on surfaces and the signed
  {B}ollob{\'a}s-{R}iordan polynomial.
\newblock {\em J. Combin. Theory Ser. B}, 99(3):617--638, 2009.

\bibitem{zbMATH06024136}
J.~A. Ellis-Monaghan and I.~Moffatt.
\newblock Twisted duality for embedded graphs.
\newblock {\em Trans. Am. Math. Soc.}, 364(3):1529--1569, 2012.

\bibitem{graphsonsurfaces}
J.~A. Ellis-Monaghan and I.~Moffatt.
\newblock {\em Graphs on surfaces. {Dualities}, polynomials, and knots}.
\newblock SpringerBriefs Math. New York, NY: Springer, 2013.

\bibitem{MR2994409}
J.~A. Ellis-Monaghan and I.~Moffatt.
\newblock A {P}enrose polynomial for embedded graphs.
\newblock {\em European J. Combin.}, 34(2):424--445, 2013.

\bibitem{EllisMoff}
J.~A. Ellis-Monaghan and I.~Moffatt.
\newblock Evaluations of topological {Tutte} polynomials.
\newblock {\em Comb. Probab. Comput.}, 24(3):556--583, 2015.

\bibitem{MR1980048}
J.~A. Ellis-Monaghan and I.~Sarmiento.
\newblock Generalized transition polynomials.
\newblock {\em Congr. Numerantium}, 155:57--69, 2002.

\bibitem{zbMATH05937085}
J.~A. Ellis-Monaghan and I.~Sarmiento.
\newblock A recipe theorem for the topological {Tutte} polynomial of
  {Bollob{\'a}s} and {Riordan}.
\newblock {\em Eur. J. Comb.}, 32(6):782--794, 2011.

\bibitem{MR2854787}
S.~Huggett and I.~Moffatt.
\newblock Expansions for the {B}ollob\'{a}s-{R}iordan polynomial of separable
  ribbon graphs.
\newblock {\em Ann. Comb.}, 15(4):675--706, 2011.

\bibitem{HM}
S.~Huggett and I.~Moffatt.
\newblock Types of embedded graphs and their {T}utte polynomials.
\newblock {\em Math. Proc. Cambridge Philos. Soc.}, 169(2):255--297, 2020.

\bibitem{MR1096990}
F.~Jaeger.
\newblock On transition polynomials of {$4$}-regular graphs.
\newblock In {\em Cycles and rays ({M}ontreal, {PQ}, 1987)}, volume 301 of {\em
  NATO Adv. Sci. Inst. Ser. C Math. Phys. Sci.}, pages 123--150. Kluwer Acad.
  Publ., Dordrecht, 1990.

\bibitem{MR1049758}
F.~Jaeger, D.~L. Vertigan, and D.~J.~A. Welsh.
\newblock On the computational complexity of the {J}ones and {T}utte
  polynomials.
\newblock {\em Math. Proc. Cambridge Philos. Soc.}, 108(1):35--53, 1990.

\bibitem{KMT}
T.~Krajewski, I.~Moffatt, and A.~Tanasa.
\newblock Hopf algebras and {T}utte polynomials.
\newblock {\em Adv. in Appl. Math.}, 95:271--330, 2018.

\bibitem{krushkalpoly}
V.~Krushkal.
\newblock Graphs, links, and duality on surfaces.
\newblock {\em Comb. Probab. Comput.}, 20(2):267--287, 2011.

\bibitem{merinohandbook}
C.~Merino.
\newblock Computational techniques.
\newblock In J.~Ellis-Monagan and I.~Moffatt, editors, {\em Handbook of the
  Tutte polynomial and related topics}, pages 141--160. CRC Press, 2022.

\bibitem{zbMATH05216495}
I.~Moffatt.
\newblock Knot invariants and the {Bollob{\'a}s}-{Riordan} polynomial of
  embedded graphs.
\newblock {\em Eur. J. Comb.}, 29(1):95--107, 2008.

\bibitem{moffatt2023}
I.~Moffatt, S.~Noble, and M.~Thompson.
\newblock Tensor products of multimatroids and a {Brylawski}-type formula for
  the transition polynomial.
\newblock Preprint, {arXiv}:2309.00493, 2023.

\bibitem{thesis}
M.~Thompson.
\newblock {\em Topological Analogues of the Tutte polynomial and their
  Decompositions}.
\newblock PhD thesis, Royal Holloway, University of London, 2024.

\end{thebibliography}

\end{document}